\setlist[enumerate]{leftmargin=.5in}
\setlist[itemize]{leftmargin=.5in}
\newtheorem{theorem}{Theorem}[section]
\newaliascnt{lemma}{theorem}            
\newtheorem{lemma}[lemma]{Lemma}        
\newaliascnt{proposition}{theorem}
\newtheorem{proposition}[proposition]{Proposition}
\newaliascnt{corollary}{theorem}
\newaliascnt{remark}{theorem}
\newtheorem{remark}[remark]{Remark}
\newaliascnt{definition}{theorem}
\newtheorem{definition}[definition]{Definition}
\newtheorem{assumption}{Assumption}[section]
\crefname{appendix}{Appendix}{Appendices}
\Crefname{appendix}{Appendix}{Appendices}
\gappto{\appendix}{
  \crefalias{section}{appendix}
  
  \renewcommand{\thesection}{\Alph{section}}
  \renewcommand{\thesubsection}{\Alph{section}.\arabic{subsection}}
  
}
\crefname{figure}{Fig.}{Figs.}
\Crefname{figure}{Fig.}{Figs.}
\crefname{equation}{}{}
\Crefname{equation}{}{}
\crefname{lemma}{Lemma}{Lemmas}
\Crefname{lemma}{Lemma}{Lemmas}
\crefname{assumption}{Assumption}{Assumptions}
\Crefname{assumption}{Assumption}{Assumptions}
\crefname{definition}{Definition}{Definitions}
\Crefname{definition}{Definition}{Definitions}
\crefname{theorem}{Theorem}{Theorems}
\Crefname{theorem}{Theorem}{Theorems}
\crefname{proposition}{Proposition}{Propositions}
\Crefname{proposition}{Proposition}{Propositions}
\crefname{corollary}{Corollary}{Corollaries}
\Crefname{corollary}{Corollary}{Corollaries}
\crefname{remark}{Remark}{Remarks}
\Crefname{remark}{Remark}{Remarks}
\renewcommand\@maketitle{%
  \newpage
  \null
  \vspace{1em}%
  \begin{center}%
  \let \footnote \thanks
    {\normalsize \bfseries \@title \par}%
    \vspace{1.5em}%
    {\normalsize
      \lineskip .5em%
      \begin{tabular}[t]{c}%
        \@author
      \end{tabular}\par}%
    \vspace{1em}%
    {\small \itshape $^{1}$Institute of Mathematics, Ecole Polytechnique Fédérale de Lausanne, \\
    Station 15, CH-1015, Lausanne, Vaud, Switzerland \par}%
    \vspace{0.5em}%
    {\small \texttt{$^{\dagger}$philipp.weder@epfl.ch, $^{*}$annalisa.buffa@epfl.ch} \par}%
    \vspace{1em}%
    {\normalsize \@date}%
  \end{center}%
  \par
}
\title{A Certified Goal-Oriented A Posteriori Defeaturing \\ Error Estimator for Elliptic PDEs}
\author{Philipp Weder$^{1,\dagger}$ and Annalisa Buffa$^{1, *}$}
\date{\today}
\newcommand{\bibfont}{\fontsize{9}{11}\selectfont}
\renewcommand\@biblabel[1]{#1.}
\titlespacing*{\section}{0pt}{18pt plus 3pt minus 6pt}{5pt}
\titlespacing*{\subsection}{0pt}{18pt plus 3pt minus 6pt}{5pt}
\titlespacing*{\subsubsection}{0pt}{18pt plus 3pt minus 3pt}{5pt}
\begin{document}

\maketitle

\begin{abstract}
Defeaturing, the process of simplifying computational geometries, is a critical step in industrial simulation pipelines for reducing computational cost. Rigorous a posteriori estimators exist for the global energy-norm error introduced by geometry simplifications. However, practitioners are usually more concerned with the accuracy of specific quantities of interest (QoIs) in the solution. This paper bridges that gap by developing mathematically certified, goal-oriented \emph{a posteriori} defeaturing error estimators for Poisson's equation, linear elasticity, and Stokes flow. First, we derive new reliable energy-norm estimators for features subject to Dirichlet boundary conditions in linear elasticity and Stokes flow, based on existing results for Poisson's equation. Second, we formulate general energy-norm estimators for multiple negative features, subject to either Dirichlet or Neumann boundary conditions for the first time. Finally, we combine these estimators with the dual-weighted residual (DWR) method to obtain reliable estimates for linear QoIs and demonstrate their effectiveness across a range of numerical experiments.
\end{abstract}
\paragraph{Keywords:} Geometric defeaturing; goal-oriented error estimation; a posteriori error analysis; dual-weighted residual method.
\paragraph{AMS Subject Classification:} 65N50, 65N15, 65N30

\section{Introduction}
Solving partial differential equations (PDEs) in industrial applications often involves geometrically complex domains. These geometries contain features across multiple scales, which makes meshing difficult and simulations computationally expensive. Even the presence of a single, relatively small geometric feature can increase the computational cost of a simulation by an order of magnitude due to the need for additional degrees of freedom.\cite{white_meshing_2003,lee_small_2005} It is standard practice to simplify these geometries by removing small or irrelevant features, a process known as defeaturing.

Traditionally, defeaturing relies on \emph{a priori} criteria that are applied before any simulation is performed. These criteria are often heuristic, ranging from an engineer's subjective expertise\cite{lee_development_2003} to simplifications based on purely geometrical rationale\cite{thakur_survey_2009} or prior knowledge of the underlying physical problem.\cite{fine_automated_2000,foucault_mechanical_2004,rahimi_cad_2018}

For a certified modeling and simulation process, \emph{a posteriori} defeaturing error estimators are necessary. Early approaches provided the crucial insight that the error concentrates at simplified boundaries\cite{ferrandes_posteriori_2009} or were based on asymptotic considerations such as feature sensitivity analysis (FSA), which crucially rely on the assumption of infinitesimally small features.\cite{gopalakrishnan_formal_2007,gopalakrishnan_feature_2008,turevsky_defeaturing_2008,turevsky_efficient_2009} More recent methods using the dual-weighted residual (DWR) technique\cite{becker_optimal_2001,prudhomme_goal-oriented_1999} or the reciprocal theorem have also been explored.\cite{li_estimating_2011,li_goal-oriented_2013,tang_evaluating_2013,zhang_estimation_2016} However, these works lack a rigorous mathematical certification and involve heuristic parameters that depend on the feature's size.

This work builds on the framework for \emph{a posteriori} defeaturing error estimation introduced by Buffa et al.\cite{buffa_analysis-aware_2022}, where the defeaturing problem is rigorously analyzed for the Poisson problem with one feature subject to Neumann boundary conditions. It provides estimates that explicitly capture the size of the feature, allowing for the treatment of features of any size.
Subsequent work extended this framework in several directions: Antolín and Chanon\cite{antolin_analysisaware_2024} generalized the estimates to multiple features with Neumann boundary conditions, as well as to the linear elasticity and Stokes' equations. Weder and Buffa\cite{weder_analysis-aware_2025} treated the case of negative features subject to Dirichlet boundary conditions for Poisson's equation. Furthermore, Buffa et al.\cite{buffa_adaptive_2024,buffa_equilibrated_2024,buffa_adaptive_2025} combined the defeaturing estimators with \emph{a posteriori} numerical error estimators for isogeometric analysis \cite{hughes_isogeometric_2005,marussig_review_2018} and finite elements. Chanon\cite{chanon_adaptive_2022} provides a comprehensive treatment of the topic.

While the previously introduced estimators provide robust estimates of the defeaturing error in the global energy-norm of the underlying PDE problem\cite{buffa_analysis-aware_2022,antolin_analysisaware_2024,chanon_adaptive_2022}, engineers are often interested in the accuracy of specific quantities of interest (QoIs), such as the average stress on a surface, the flux across a boundary, or the displacement at a specific point. In the latter cases, the global energy norm may not be an accurate proxy for the error in these QoIs.
To bridge this gap, we combine the rigorous energy-norm estimators with DWR .\cite{becker_optimal_2001,prudhomme_goal-oriented_1999}

This paper makes three primary contributions: First, we extend the analysis of Dirichlet features to linear elasticity and Stokes flow by deriving new reliable energy-norm estimators, which are of independent interest. Second, we present a unified problem formulation for geometries with mixed Dirichlet and Neumann features. Building on this foundation, we then develop certified goal-oriented defeaturing estimators for linear quantities of interest applicable to all three model problems. Our approach advances beyond previous DWR-based methods, which rely on heuristic approximations or the dismissal of uncomputable error terms\cite{li_estimating_2011,li_goal-oriented_2013}. In contrast, our framework provides a mathematically certified error bound in the form of an inequality up to a constant that is independent of the feature's size. Moreover, it is significantly more general, providing estimators for complex configurations involving multiple negative isotropic features, features on the domain boundary, and those subject to either Dirichlet or Neumann conditions, thus overcoming key limitations of prior work.\cite{li_goal-oriented_2013}

The paper is structured as follows: \Cref{sec:preliminaries} provides the necessary geometric and functional analytic background. \Cref{sec:poisson,sec:elasticity,sec:stokes} are dedicated to the Poisson, linear elasticity, and Stokes problems, respectively. For each model, we first formulate the general defeaturing problem for geometries with multiple features and mixed Dirichlet and Neumann boundary conditions, and then derive the corresponding goal-oriented \emph{a posteriori} error estimators. The reliability proofs for the new energy-norm estimators for Dirichlet features in elasticity and Stokes flow are detailed in \cref{sec:linear elasticity:reliability,sec:stokes:reliability}, respectively. Finally, we validate our theoretical results with numerical experiments in \cref{sec:numerical experiments} and present our conclusions in \cref{sec:conclusion}.

\section{Preliminaries}
\label{sec:preliminaries}
This section provides the necessary background for our analysis, beginning with the terminology used to describe complex geometries with negative features. It then defines the functional spaces in which our analysis is set. We begin by recalling the notation $\lesssim$: We write $A \lesssim B$ whenever $A \leq c B$ for some constant $c$ independent of the size of the considered domains and boundaries. Additionally, we use $A \simeq B$, whenever $A \lesssim B$ and $B \lesssim A$.

\subsection{Domains, boundaries, and features}
\label{subsec:domains boundaries features}

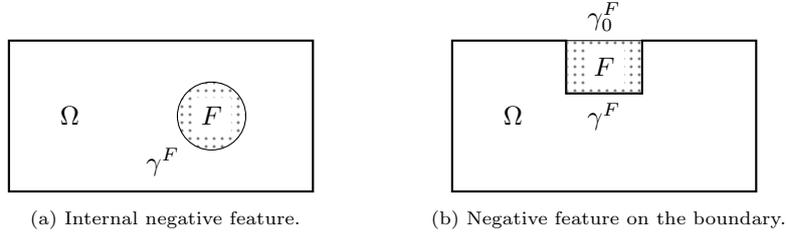
\begin{figure}
    \centering
    \begin{subfigure}[B]{0.45\textwidth}
    \centering
    \begin{tikzpicture}
    \def\length{4}
    \def\width{2}
    \def\radius{0.45}

    \draw[thick] (0, 0) rectangle (\length, \width);

    \node at (\length / 5, \width / 2) {$\domain$};

    \filldraw[pattern=dots, pattern color=gray]  (2 * \length / 3, \width / 2) circle [radius=\radius];

    \node[fill=white] at (2 * \length / 3, \width / 2) {$\feat$};

    \node[anchor=north east] at (2 * \length / 3 - \radius / 1.5, \width / 2 - \radius / 1.5) {$\defbd^\feat$};
    
\end{tikzpicture}
    \caption{Internal negative feature.}
    \label{fig:preliminaries:internal feature}
    \end{subfigure}
    \begin{subfigure}[B]{0.45\textwidth}
    \centering
    \begin{tikzpicture}
    \def\length{4}
    \def\width{2}
    \def\featLength{1}
    \def\featWidth{0.7}

    \coordinate (A) at (0, 0);
    \coordinate (B) at (\length, 0);
    \coordinate (C) at (\length, \width);
    \coordinate (D) at (0, \width);

    \coordinate (a) at ($(\length / 2 - \featLength /2, \width - \featWidth)$);
    \coordinate (b) at ($(\length / 2 + \featLength /2, \width - \featWidth)$);
    \coordinate (c) at ($(\length / 2 + \featLength /2, \width)$);
    \coordinate (d) at ($(\length / 2 - \featLength /2, \width)$);

    \draw[thick] (A) -- (B) -- (C) -- (c) -- (b) -- (a) -- (d) -- (D) -- cycle;
    
    \node at (\length / 5, \width / 2) {$\domain$};

    \filldraw[pattern=dots, pattern color=gray]  (a) rectangle (c);

    \node[fill=white] at ($(a)!0.5!(c)$) {$\feat$};

    \node[anchor=north] at ($(a)!0.5!(b)$) {$\defbd^\feat$};
    \node[anchor=south] at ($(c)!0.5!(d)$) {$\simpbd^\feat$};
    
\end{tikzpicture}
    \caption{Negative feature on the boundary.}
    \end{subfigure}
    \caption{Domain with an internal feature $\feat$ (a) and a boundary feature $\feat$ (b). The defeatured and simplified boundaries are denoted by $\defbd$ and $\simpbd$, respectively. The dotted parts represent the feature's interior.}
    \label{fig:preliminaries:feature types}
\end{figure}

In what follows, let $\domain \subset \R^n$ denote a bounded open set. We say that $\domain$ is a Lipschitz domain if its boundary $\partial \domain$ is locally the graph of a Lipschitz function. Unless otherwise stated, we restrict our analysis to such domains for $n=2$ or $n=3$.

Let $\genman$ be a $d$-dimensional submanifold of $\R^n$ with $d \in \{n, n-1\}$. If $d = n$, then $|\genman|$ denotes the Lebesgue measure of the set $\genman$, while for $d = n-1$, $|\genman|$ denotes the $(n-1)$-dimensional Hausdorff measure. The closure and interior of $\genman$ are denoted by $\overline{\genman}$ and $\interior{\genman}$, respectively. For a subset $\genman_* \subset\genman$, not necessarily connected, we denote by $\hull{\genman_*}$ the convex hull of $\genman_*$ in $\genman$. Moreover, we denote by $\diam{\genman_*}$ the manifold diameter of $\hull{\genman_*}$, that is
$\diam{\genman_*} \coloneqq \max\{\rho(x, y) | x, y \in \hull{\genman_*}\}$,
where $\rho(x, y)$ denotes the geodesic distance between two points $x$ and $y$. Finally, we will require the geometrical features defined below to satisfy the following isotropy property:    
\begin{definition}
\label{def:isotropic subset}
    Let $\genman$ be a $d$-dimensional subset of $\R^n, d \in \{n-1, n\}$. We say that $\genman$ is \emph{isotropic} if
    \begin{align*}
        \diam{\genman} \lesssim \max_{\genman_c \in \conn{\genman}} \left(\diam{\genman_c}\right),
    \end{align*}
    and each connected component $\genman_c \in \conn{\genman}$ satisfies $\diam{\genman_c}^d \lesssim |\genman_c|$.
    In particular, if $\genman$ is isotropic and if we let $\genman_{\max} \coloneqq \operatorname*{argmax}_{\genman_c \in \conn{\genman}}\left ( \diam{\genman_c}\right)$, then
    \begin{align*}
        \diam{\genman}^d \lesssim \diam{\genman_{\max}}^d \lesssim |\genman_{\max}| \leq |\genman|.
    \end{align*}
    Moreover, if $\genman$ is connected, we have $\diam{\genman}^d \simeq |\genman|$.
\end{definition}

Next, we will formalize the idea of a feature. Let $\domain \subset \R^n$ be a Lipschitz domain. As illustrated in \cref{fig:preliminaries:feature types}, we say that $\feat \subset \R^n$ is a negative feature of $\domain$ if $\overline{F} \cap \overline{\domain} \subset \partial \domain$. In the following, we consider domains with arbitrary, but finite, numbers $N_f \in \N$ of negative features. We denote the set of all features by $\featset = \{\feat^1, \ldots, \feat^{N_f}\}$. Hereby, we assume that the features are \emph{separated}. That is, we assume that
\begin{align*}
    \overline{F}^k \cap \overline{F}^\ell = \emptyset && \forall k, \ell = 1, \ldots, N_f, k \neq \ell.
\end{align*}
The \emph{defeatured domain} $\defdomain$ is then defined by
\begin{align*}
    \defdomain \coloneq \domain \cup \interior{\bigcup_{\feat \in \featset} \overline{F}}.
\end{align*}
For a feature $\feat \in \featset$, we define the \emph{defeatured boundary} of the feature $\feat$, $\defbd^\feat \coloneqq \interior{\overline{\partial \domain} \cap \overline{\partial \feat}}$. This is the piece of the boundary that is lost, when the feature $\feat$ is removed from the geometry. In contrast, the \emph{simplified boundary} of the feature is then defined as $\simpbd^\feat \coloneqq \partial \feat \setminus \overline{\defbd}$. This is the piece of the defeatured boundary $\partial \defdomain$ that is added, when the feature $\feat$ is removed from the geometry; see \cref{fig:preliminaries:feature types}. In particular, we have $\partial \feat = \overline{\defbd^\feat} \cup \overline{\simpbd^\feat}$. The complete defeatured and simplified boundaries are, respectively, defined as
\begin{align*}
    \completedefbd \coloneqq \interior{\bigcup_{\feat \in \featset} \overline{\defbd^\feat}}, & & \text{ and } & & \completesimpbd \coloneqq \interior{ \bigcup_{F \in \featset} \overline{\simpbd^\feat}}.
\end{align*}
Note that for an internal feature $\feat$, the simplified boundary $\simpbd^\feat$ is empty; see \cref{fig:preliminaries:internal feature}.

\subsection{Functional spaces and operators}
Let $\Lp{\omega}$ denote the standard Lebesgue spaces of exponent $p \in [1, \infty]$, equipped with the standard norm denoted by $||\cdot||_{\Lp{\omega}}$. For a multi-index $\boldsymbol{\alpha} \in \N_0^d$, we denote by $D^{\boldsymbol{\alpha}}$ the partial derivative operator. We write $\hs{\omega}$ for the standard Sobolev space of order $s \geq 0$, endowed with the standard norm $\norm{\cdot}{s}{\omega}$; see e.g. Leoni\cite{leoni_first_2017}. Writing $|\boldsymbol{\alpha}| \coloneqq \sum_{i = 1}^d \alpha_i$, the latter is defined by
\begin{align}
    \label{eq:definition of fractional sobolev norm}
    \norm{v}{s}{\omega}^2 \coloneqq \norm{v}{\lfloor s \rfloor}{\omega}^2 + \seminorm{v}{\theta}{\omega}^2, \quad \quad
    \norm{v}{\lfloor s \rfloor}{\omega}^2 \coloneqq \sum_{0 \leq |\boldsymbol{\alpha}| \leq \lfloor s \rfloor} \Ltwonorm{D^{\boldsymbol{\alpha}} v}{\omega}^2,
    \\
    \seminorm{v}{\theta}{\omega}^2 \coloneqq \sum_{|\boldsymbol{\alpha}| = \lfloor s \rfloor} \int_\omega \int_\omega \frac{|D^{\boldsymbol{\alpha}}v(x) -  D^{\boldsymbol{\alpha}}v(y)|^2}{|x - y|^{n + 2\theta}} \dd{s}(x) \dd{s}(y),
\end{align}
where $s = \lfloor s \rfloor + \theta \geq 0$ and $\dd{s}$ denotes the $(n-1)$-dimensional Hausdorff measure if $d = n-1$ and the Lebesgue measure if $d = n$. The vector-valued Lebesgue and Sobolev spaces are denoted by $\Ltwovec{\genman} = [\Ltwo{\genman}]^n$ and $\hsvec{\genman} = [\hs{\genman}]^{n}$, respectively.
In addition, we define the average of a function over $\genman$ by
\begin{align*}
    \overline{v}^{\genman} \coloneqq \frac{1}{|\genman|} \int_{\genman} v(x) \, \dd{s}(x), & & \forall v \in \Ltwo{\genman}.
\end{align*}

For a subset $\genbd$ of the Lipschitz boundary $\partial \domain$ with $|\genbd| > 0$, we write $\traceop_\genbd: \hone{\domain} \to \htrace{\genbd}$ and $\liftop_\genbd :\htrace{\genbd} \to \hone{\domain}$ for the unique trace and corresponding lifting operator ,\cite{leoni_first_2017} respectively, such that $\traceop_\genbd(\liftop_\genbd(\mu)) = \mu$ for all $\mu \in \htrace{\genbd}$.
In particular, for a function $\mu \in \htrace{\genbd}$, we may define the subset
\begin{align*}
    \honebd{\mu}{\genbd}{\domain} \coloneqq \left \{v \in \hone{\domain} :\traceop_\genbd(v) = \mu\right\}.
\end{align*}

Finally, we define the Neumann trace operator as the continuous linear operator $\neumop: \hdiv\domain \to \htracedual{\partial \domain}$, such that for $v \in \hdiv\domain$ and $\mu \in \htrace{\domain}$,
\begin{align}
\label{eq:defnition of neumann operator}
    \langle \neumop(v), \mu \rangle \coloneqq \int_\domain v \cdot \nabla \liftop_{\partial \domain}(\mu) \dd{x} + \int_\domain \Div(v) \liftop_{\partial \domain}(\mu) \dd{x},
\end{align}
where $\langle \cdot, \cdot \rangle$ denotes the duality pairing between $\htrace{\partial \domain}$ and $\htracedual{\partial \domain}$. In particular, for $w \in H^{3/2}(\domain)$ such that $\Delta w \in \Ltwo{\domain}$, it holds that $\neumop(\nabla w) \in L^2(\partial \domain)$\cite{lions_non-homogeneous_1972}.

\subsection{Linear quantities of interest}
\label{subsec:linear quantities of interest}
Let us now abstractly introduce linear QoIs, in terms of which we later want to quantify the defeaturing error. To that end, let $V(\domain)$ and $V_0(\defdomain)$ be Hilbert spaces over the exact domain $\domain$ and defeatured domain $\defdomain$, respectively.
Then, a linear QoI is given by a linear functional $L$ in the dual space $V(\domain)'$.
We denote by $L_0 \in V_0(\defdomain)'$ the defeatured counterpart of $L$, whereby we make the following assumption:
\begin{assumption}
\label{assumption:defeatured qoi}
The defeatured QoI $L_0$ satisfies
\begin{align*}
\label{eq:preliminaries:defeatured qoi condition}
L_0(v_0) = L((v_0)_{|\domain}) && \forall v_0 \in V_0.
\end{align*}
\end{assumption}

\Cref{assumption:defeatured qoi} serves as a simplifying assumption in this paper. It expresses that neither $L$ nor $L_0$ interacts with the feature and its boundary, which holds for any localized QoI, but not for global ones such as the compliance.

\section{Defeaturing Poisson's equation}
\label{sec:poisson}
Poisson's equation provides the simplest setting to develop and illustrate the central ideas of our goal-oriented defeaturing framework. This model problem allows us to introduce the classification of features, formulate the defeaturing problem for geometries with multiple Dirichlet and Neumann features, and establish both energy-norm and goal-oriented error estimators. The arguments presented here form the blueprint for the subsequent extensions to linear elasticity and Stokes flow.

To that end, we decompose the set of features $\featset$ from \cref{subsec:domains boundaries features} into the sets of Dirichlet features $\dirfeatset$ and Neumann features $\neumfeatset$, respectively such that $\featset = \dirfeatset \cup \neumfeatset$. For technical reasons, we must distinguish between three different types within the Dirichlet features: First, \emph{internal} features that are not in contact with any other boundaries, denoted by $\dirintfeatset \subset \dirfeatset$; second, \emph{Dirichlet-Dirichlet} features, that only touch other Dirichlet boundaries, denoted by $\dirdirfeatset \subset \dirfeatset$; and third, \emph{Dirichlet-Neumann} features that may also touch other Neumann boundaries, which we denote by $\dirneumfeatset \subset \dirfeatset$. We refer to Weder and Buffa\cite{weder_analysis-aware_2025} for an in-depth discussion of this distinction.

The two different Dirichlet boundary feature types are illustrated in \cref{fig:poisson:dirichlet feature types}. In contrast, no such distinction is necessary for Neumann features\cite{chanon_adaptive_2022}.
\begin{figure}
\centering
    \begin{subfigure}[B]{0.45\textwidth}
    \centering
    \begin{tikzpicture}
    \def\length{4}
    \def\width{2}
    \def\featLength{1}
    \def\featWidth{0.7}

    \coordinate (A) at (0, 0);
    \coordinate (B) at (\length, 0);
    \coordinate (C) at (\length, \width);
    \coordinate (D) at (0, \width);

    \coordinate (a) at ($(\length / 2 - \featLength /2, \width - \featWidth)$);
    \coordinate (b) at ($(\length / 2 + \featLength /2, \width - \featWidth)$);
    \coordinate (c) at ($(\length / 2 + \featLength /2, \width)$);
    \coordinate (d) at ($(\length / 2 - \featLength /2, \width)$);

    \draw[thick] (A) -- (B) -- (C) -- (c) -- (b) -- (a) -- (d) -- (D) -- cycle;
    
    \node at (\length / 5, \width / 2) {$\domain$};
    \node[anchor=west] at ($(B)!0.25!(C)$) {$\rmbd_D$};

    \filldraw[pattern=dots, pattern color=gray]  (a) rectangle (c);

    \node[fill=white] at ($(a)!0.5!(c)$) {$\feat$};
    
\end{tikzpicture}
    \caption{Dirichlet-Dirichlet feature.}
    \end{subfigure}
    \begin{subfigure}[B]{0.45\textwidth}
    \centering
    \begin{tikzpicture}
    \def\length{4}
    \def\width{2}
    \def\featLength{1}
    \def\featWidth{0.7}

    \coordinate (A) at (0, 0);
    \coordinate (B) at (\length, 0);
    \coordinate (C) at (\length, \width);
    \coordinate (D) at (0, \width);

    \coordinate (a) at ($(\length / 2 - \featLength /2, \width - \featWidth)$);
    \coordinate (b) at ($(\length / 2 + \featLength /2, \width - \featWidth)$);
    \coordinate (c) at ($(\length / 2 + \featLength /2, \width)$);
    \coordinate (d) at ($(\length / 2 - \featLength /2, \width)$);

    \draw[thick] (A) -- (B) -- (C) -- (c) -- (b) -- (a) -- (d) -- (D) -- cycle;

    \draw[thick, red1] (D) -- (d);
    \draw[thick, red1] (c) -- (C);

    \node[anchor=south, red1] at ($(c)!0.7!(C)$) {$\rmbd_N$};

    
    \def\offset{0.1}
    \def\neumannOffset{0.66}
    \coordinate (ao) at ($(\length / 2 - \featLength /2 - \offset, \width - \featWidth - \offset)$);
    \coordinate (bo) at ($(\length / 2 + \featLength /2 + \offset, \width - \featWidth - \offset)$);
    \coordinate (co) at ($(\length / 2 + \featLength /2 + \offset, \width - \offset)$);
    \coordinate (do) at ($(\length / 2 - \featLength /2 - \offset, \width - \offset)$);

    \coordinate(cno) at ($(\length / 2 + \featLength /2 + \neumannOffset, \width - \offset)$);
    \coordinate (dno) at ($(\length / 2 - \featLength /2 - \neumannOffset, \width - \offset)$);


    
    
    \node at (\length / 5, \width / 2) {$\domain$};
    \node[anchor=west] at ($(B)!0.25!(C)$) {$\rmbd_D$};

    \filldraw[pattern=dots, pattern color=gray]  (a) rectangle (c);

    \node[fill=white] at ($(a)!0.5!(c)$) {$\feat$};

    
\end{tikzpicture}
    \caption{Dirichlet-Neumann feature.}
    \label{fig:dirichlet neumann feature}
    \end{subfigure}
\caption{Illustration of the two types of Dirichlet boundary features: (a) Dirichlet-Dirichlet feature and (b) Dirichlet-Neumann feature.}

\label{fig:poisson:dirichlet feature types}
\end{figure}
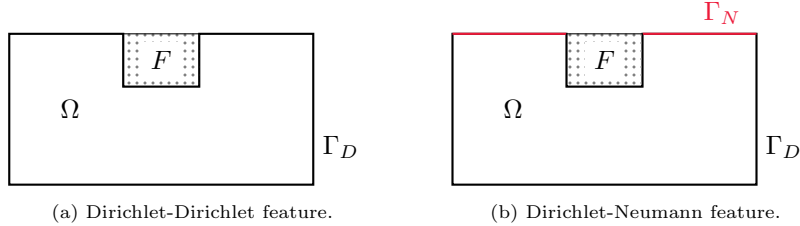
 Moreover, we can decompose the complete defeatured and simplified boundaries as
\begin{align*}
    \defbd = \interior{\overline{\completedirdefbd} \cup \overline{\completeneumdefbd}}, && \text{ and} && \simpbd = \interior{\overline{\completedirsimpbd} \cup \overline{\completeneumsimpbd}},
\end{align*}
where the Dirichlet pieces are defined by
\begin{align*}
    \completedirdefbd \coloneqq \interior{\bigcup_{\feat \in \dirfeatset}\overline{\dirdefbd^\feat}}, && \text{ and } && \completedirsimpbd \coloneqq \interior{\bigcup_{\feat \in \dirdirfeatset \cup \dirintfeatset} \overline{\dirsimpbd^\feat}},
\end{align*}
while the Neumann pieces are defined by
\begin{align*}
    \completeneumdefbd \coloneqq \interior{\bigcup_{\feat \in \neumfeatset}\overline{\dirdefbd^\feat}}, && \text{ and } && \completeneumsimpbd \coloneqq \interior{\bigcup_{\feat \in \neumfeatset \cup \dirneumfeatset} \overline{\dirsimpbd^\feat}}.
\end{align*}

\begin{remark}
On simplified boundaries of Dirichlet–Neumann features, we intentionally impose Neumann conditions to avoid tracking complex intersections with surrounding boundaries. However, in terms of the analysis, one could also impose Dirichlet data.
\end{remark}

To formulate the problem in the exact domain $\domain$, its boundary $\partial \domain$ is decomposed into the Dirichlet and Neumann parts, i.e. $\partial \domain = \rmbd_D \cup \rmbd_N$, where $\rmbd_D \cap \rmbd_N = \emptyset$. Particularly, we have $\completedirdefbd \subset \rmbd_D$ and $\completeneumdefbd \subset \rmbd_N$. Then, the exact problem reads
\begin{equation}
\label{eq:poisson:exact problem}
\begin{cases}
-\Delta u = f & \text{ in } \domain,\\
u = g & \text{ on } \rmbd_D,\\
\partial_n u = h  & \text{ on } \rmbd_N,
\end{cases}
\end{equation}
for boundary data $g \in \htrace{\rmbd_D}$ and $h \in \Ltwo{\rmbd_N}$ and a source term $f \in \Ltwo{\domain}$. Problem \cref{eq:poisson:exact problem} is well-posed in $V(\domain) = \honebd{g}{\rmbd_D}{\domain}$, even with slightly weaker regularity assumptions, but we assume the latter for simplicity\cite{ern_finite_2021}.

Similarly, we can decompose the boundary of the defeatured domain as $\partial \defdomain = \rmbd_{D, 0} \cup \neumrmbd$. After choosing suitable extensions of the boundary data 
$g_0 \in \htrace{\dirrmbd}$ and $h_0 \in \Ltwo{\neumrmbd}$, the defeatured problem reads
\begin{align}
\label{eq:poisson:defeatured problem}
\begin{cases}
    - \Delta u_0 = f & \text{ in } \defdomain,\\
    u_0 = g_0 & \text{ on } \dirrmbd,\\
    \partial_n u_0 = h_0 & \text{ on } \neumrmbd,
\end{cases}
\end{align}
where we write $f$ for an $L^2$-extension of the source term in $\domain$ to $\defdomain$ by abuse of notation.
Problem \cref{eq:poisson:defeatured problem} is also well-posed in $V_0(\domain) = \honebd{g_{0}}{\dirrmbd}{\defdomain}$. Having set up the defeaturing problem for Poisson’s equation, we now turn to quantifying the error between $u$ and $u_0$ in the corresponding energy norm.

\subsection{Energy-norm estimates}
In this subsection, we establish energy-norm estimates that quantify the error introduced by defeaturing in the Poisson problem. For ease of notation, we define the defeaturing error function $e \coloneqq u - (u_0)_{|\domain}$, which satisfies the PDE
\begin{align}
\label{eq:poisson:error pde}
\begin{cases}
    -\Delta e = 0 & \text{ in } \domain,\\
    e = 0 & \text{ on } \rmbd_D \setminus \overline{\completedirdefbd},\\
    e = d_{\completedirdefbd} & \text{ on } \completedirdefbd,\\
    \partial_n e = 0 & \text{ on } \rmbd_N \setminus \overline{\completeneumdefbd},\\
    \partial_n e = d_{\completeneumdefbd} & \text{ on } \completeneumdefbd,
\end{cases}
\end{align}
where $d_{\completedirdefbd} \coloneqq g_{|\completedirdefbd} - (u_0)_{|\completedirdefbd}$ and $d_{\completeneumdefbd} \coloneqq h_{|\completeneumdefbd} - (\partial_n u_0)_{|\completeneumdefbd}$. For the individual features, we similarly define the boundary errors by
\begin{align*}
    d_{\defbd^\feat} \coloneqq \begin{cases}
        g_{|\defbd^\feat} - (u_0)_{|\defbd^\feat} & \feat \in \dirfeatset, \\
        h_{|\defbd^\feat} - (\partial_n u_0)_{|\defbd^\feat} & \feat \in \neumfeatset.
    \end{cases}
\end{align*}

Note that in general, the defeaturing error $e$ belongs to $\hone{\domain}$. While the following analysis can be conducted under minimal regularity assumptions following the argument presented by Weder and Buffa\cite{weder_analysis-aware_2025}, we assume for ease of exposition that $e \in H^{3/2}(\domain)$.
This assumption holds in Lipschitz domains under light assumptions on the problem data and the interfaces between Dirichlet and Neumann boundaries\cite{grisvard_elliptic_2011}.
This regularity assumption implies that $\neumop(\nabla e) \in \Ltwo{\partial \domain}$, which greatly simplifies the treatment of Dirichlet-Neumann features.

Recall that the energy norm associated with the Poisson problem is given by the $H^1$-seminorm, i.e., $\energynorm{v}{\domain} \coloneqq \honeseminorm{v}{\domain}$ for $v \in \honebd{\domain}{0}{\rmbd_D}$.
Using integration by parts and \cref{eq:poisson:error pde}, we find similarly to Buffa et al.\cite{buffa_analysis-aware_2022} and Weder and Buffa\cite{weder_analysis-aware_2025} that
\begin{nalign}
\label{eq:poisson:error representation}
\honeseminorm{e}{\domain}^2 &= \int_{\partial \domain} \partial_n e \, e \dd{s}
\\
&= \sum_{\feat \in \dirdirfeatset} \int_{\defbd^\feat} \neumop(\nabla e) d_{\dirdefbd^F} \dd{s} + \sum_{\feat \in \dirintfeatset} \int_{\defbd^\feat}\neumop(\nabla e) d_{\dirdefbd^F} \dd{s}
\\
&+ \sum_{\feat \in \dirneumfeatset} \int_{\defbd^\feat} \neumop(\nabla e) d_{\dirdefbd^\feat} \dd{s}
+
\sum_{\feat \in \neumfeatset} \int_{\defbd^\feat} d_{\neumdefbd^\feat} e_{|\neumdefbd^\feat} \dd{s}.
\end{nalign}
To derive an estimator expression from \cref{eq:poisson:error representation}, we follow the arguments of Buffa and Chanon\cite{buffa_analysis-aware_2022} and Weder and Buffa\cite{weder_analysis-aware_2025}. That is, for the integrals in the four sums of \cref{eq:poisson:error representation}, we seek estimates of the form
\begin{align*}
    \int_{\defbd^\feat} \neumop(\nabla e) d_{\dirdefbd^F} \dd{s} \lesssim \mathcal{E}^{\defbd^\feat}(u_0) \honeseminorm{e}{\domain}, && \text{ or } && \int_{\defbd^\feat} d_{\neumdefbd^\feat} e_{|\neumdefbd^\feat} \dd{s} \lesssim \mathcal{E}^{\defbd^\feat}(u_0) \honeseminorm{e}{\domain},
\end{align*}
for Dirichlet features $\feat \in \dirfeatset$ and Neumann features $\feat \in \neumfeatset$, respectively. Here, $\mathcal{E}^{\defbd^\feat}(u_0)$ denotes a generic estimator expression, whose structure depends on the specific feature type.

Indeed, applying the Cauchy-Schwarz inequality separates the unknown error terms $ \neumop(\nabla e)$ or $e_{|\neumdefbd^\feat}$ from the boundary errors $d_{\defbd^\feat}$. The subsequent bounds rely on Poincaré-type inequalities on the boundary $\defbd^\feat$, the choice of which depends on the behavior of $d_{\defbd^\feat}$ at the intersection with the remaining boundary: For a Dirichlet-Dirichlet feature, the boundary error $d_{\defbd^\feat}$ vanishes at the boundary of the feature and the classical Poincaré inequality applies (\cref{lemma:poincare I}). For a Dirichlet-Neumann feature, the boundary error does not vanish at the boundary of the feature anymore. Hence, the average must be removed before a Poincaré inequality can be applied (\cref{lemma:poincare II}). Similar issues arise for internal Dirichlet and Neumann features. For the details, we refer to Buffa et al.\cite{buffa_analysis-aware_2022} and Weder and Buffa\cite{weder_analysis-aware_2025}.

The following estimator expressions are shown to be reliable for the different feature types in Refs.~\citenum{buffa_analysis-aware_2022} and \citenum{weder_analysis-aware_2025}, respectively:
\begin{align*}
\defestn{u_0}{\neumdefbd^\feat} &\coloneqq \left(|\neumdefbd^\feat|^{\frac{1}{n-1}} \Ltwonorm{d_{\dirdefbd^\feat} - \avg{d_{\dirdefbd^\feat}}{\dirdefbd^\feat}}{\dirdefbd^\feat}^2 + c_{\neumdefbd^\feat}^2 |\neumdefbd^\feat|^{\frac{n}{n-1}} |\avg{d_{\dirdefbd^\feat}}{\dirdefbd^\feat}|^2 \right)^{\frac{1}{2}},\\
\defestdd{u_0}{\dirdefbd^\feat} &\coloneqq \sqrt{2 \Ltwonorm{d_{\dirdefbd^\feat}}{\dirdefbd^\feat} \Ltwonorm{\nabla_t d_{\dirdefbd^\feat}}{\dirdefbd^\feat}},
\\
\defestdn{u_0}{\dirdefbd^\feat} &\coloneqq \sqrt{2 \Ltwonorm{d_{\dirdefbd^\feat} - \avg{d_{\dirdefbd^\feat}}{\dirdefbd^\feat}}{\dirdefbd^\feat} \Ltwonorm{\nabla_t d_{\dirdefbd^\feat}}{\dirdefbd^\feat}} + |\dirdefbd^\feat|^{\frac{n - 2}{2(n-1)}}|\avg{d_{\dirdefbd^\feat}}{\dirdefbd^\feat}|,
\\
\defestint{u_0}{\dirdefbd^\feat} &\coloneqq 2 \sqrt{ \Ltwonorm{d_{\dirdefbd^\feat} - \avg{d_{\dirdefbd^\feat}}{\dirdefbd^\feat}}{\dirdefbd^\feat} \Ltwonorm{\nabla_t d_{\dirdefbd^\feat}}{\dirdefbd^\feat}} + \Bar{c}_{\dirdefbd^\feat}|\avg{d_{\dirdefbd^\feat}}{\dirdefbd^\feat}|.
\end{align*}
Here, $\nabla_t d_{\defbd^\feat}$ denotes the tangential gradient along $\defbd^\feat$.
The constant $c_{\defbd^\feat}$ is defined by
\begin{align*}
    c_{\defbd^\feat} \coloneqq \begin{cases}
        \max\left(-\log(|\defbd^\feat|), \eta\right)^{\frac{1}{2}} & \text{ if } n = 2,
        \\
        1 & \text{ if } n = 3,
    \end{cases}
\end{align*}
where $\eta \in \R$ is the unique solution to $\eta = - \log(\eta)$; see \cite[p. 8]{buffa_analysis-aware_2022}. The constant $\Bar{c}_{\defbd^\feat}$ is defined by
\begin{align*}
        \Bar{c}_{\defbd^\feat} \coloneqq  
    \begin{cases}
        \sqrt{\frac{2 \pi}{|\log\left(s_{\defbd^\feat}\right)|}}, & n = 2,\\
        \sqrt{\frac{2 \pi \,\diam{\defbd^\feat}}{1 - s_{\defbd^\feat}}}, & n=3
    \end{cases}, && \text{ where } && s_{\defbd^\feat} \coloneqq \frac{\diam{\defbd^\feat}}{2 \dist(m_F, \partial \domain \setminus \overline{\defbd^\feat})},
\end{align*}
and $m_F$ denotes the barycenter of the feature $\feat$; see Lemma 4.3 in Weder and Buffa\cite{weder_analysis-aware_2025}.

Adding up the feature-wise contributions, we then find the global estimate
\begin{align*}
\label{eq:poisson:reliability inequality}
    \honeseminorm{e}{\domain}^2 \lesssim \honeseminorm{e}{\domain} \left(
        \sum_{\feat \in \dirdirfeatset} \defestdd{u_0}{\dirdefbd^\feat}^2 
        + \sum_{\feat \in \dirintfeatset} \defestint{u_0}{\dirdefbd^\feat}^2
        \right .
        \\
        \left .
        + \sum_{\feat \in \dirneumfeatset} \defestdn{u_0}{\dirdefbd^\feat}^2 
        + \sum_{\feat \in \neumfeatset} \defestn{u_0}{\neumdefbd^\feat}^2
    \right)^{\frac{1}{2}}.
\end{align*}
Hence, we define the general multi-feature estimator by
\begin{nalign}
\label{eq:poisson:multifeature estimator}
\multidefest{u_0} \coloneqq \left(
        \sum_{\feat \in \dirdirfeatset} \defestdd{u_0}{\dirdefbd^\feat}^2 
        + \sum_{\feat \in \dirintfeatset} \defestint{u_0}{\dirdefbd^\feat}^2
        \right .
        \\
        \left .
        + \sum_{\feat \in \dirneumfeatset} \defestdn{u_0}{\dirdefbd^\feat}^2 
        + \sum_{\feat \in \neumfeatset} \defestn{u_0}{\neumdefbd^\feat}^2
    \right)^{\frac{1}{2}},
\end{nalign}
whose reliability immediately follows from the reliability of the feature-wise estimators.

\subsection{Goal-oriented estimates}
We now turn to the derivation of goal-oriented estimates for the Poisson problem defined above. To this end, let $L \in V(\domain)'$ be a linear QoI as defined in \cref{subsec:linear quantities of interest} with a defeatured counterpart $L_0 \in V_0(\defdomain)'$ satisfying \cref{assumption:defeatured qoi}. Motivated by the dual-weighted residual method \cite{becker_optimal_2001}, we represent $L$ and $L_0$ with the help of influence functions $z$ and $z_0$, solving the following dual problems, respectively:
\begin{align}
\label{eq:poisson:dual problems}
\begin{cases}
-\Delta z = L & \text{ in } \domain,\\
z = 0 & \text{ on } \rmbd_D,\\
\partial_n z = 0  & \text{ on } \rmbd_N,
\end{cases}
&& \text{ and } &&
\begin{cases}
    - \Delta z_0 = L_0 & \text{ in } \defdomain,\\
    z_0 = 0 & \text{ on } \dirrmbd,\\
    \partial_n z_0 = 0 & \text{ on } \neumrmbd.
\end{cases}
\end{align}
For the well-posedness of these problems, we refer to Agranovich\cite{agranovich_sobolev_2015}. In addition, the energy-based estimate from the previous section still holds. The dual defeaturing error $\epsilon \coloneqq z - (z_0)_{|\domain}$ immediately satisfies the estimate $\honeseminorm{\epsilon}{\domain} \lesssim \multidefest{z_0}$.

To derive a goal-oriented estimate, we first note that in virtue of \cref{assumption:defeatured qoi}, we have
\begin{align*}
    L(u) - L_0(u_0) = L(u) - L((u_0)_{|\domain}) = L(e).
\end{align*}
Next, using the error PDE \cref{eq:poisson:error pde} for the primal error $e$ and the exact dual problem, we obtain from two successive integrations by parts that
\begin{align*}
    L(e) = \int_\domain (-\Delta z) \, e \dd{x} = \int_\domain \nabla z \cdot \nabla e \dd{x} - \int_{\partial \domain} \neumop(\nabla z) \, e \dd{s}
    \\
    =  \int_{\partial \domain} \neumop(\nabla e) \, z \dd{s}  -  \int_{\partial \domain} \neumop(\nabla z) \, e \dd{s}.
\end{align*}
Using the boundary conditions of the exact dual problem in \cref{eq:poisson:dual problems}, we find an error representation similar to \cref{eq:poisson:error representation}:

\begin{nalign}
\label{eq:poisson:qoi error representation}
    L(e) &= \sum_{\feat \in \neumfeatset} \int_{\defbd^\feat} d_{\defbd^\feat} \,  z_{|\defbd^\feat} \dd{s} - \sum_{\feat \in \dirdirfeatset} \int_{\defbd^\feat} \neumop(\nabla z) \,  d_{\defbd^\feat} \dd{s}
    \\
    &- \sum_{\feat \in \dirintfeatset} \int_{\defbd^\feat} \neumop(\nabla z) \,  d_{\defbd^\feat} \dd{s} - \sum_{\feat \in \dirneumfeatset} \int_{\defbd^\feat} \neumop(\nabla z) \, d_{\defbd^\feat} \dd{s}.
\end{nalign}
Note, however, that \cref{eq:poisson:qoi error representation} still involves the exact dual solution $z$. The idea is now to split the right-hand side of \cref{eq:poisson:qoi error representation} into a term that only involves the defeatured solutions $u_0$ and $z_0$ and a term that only involves the error functions $e$ and $\epsilon$. To that purpose, we define the corrector term

\begin{nalign}
\label{eq:poisson:corrector}
R_L^\defbd(u_0, z_0) &\coloneqq \sum_{\feat \in \neumfeatset} \int_{\defbd^\feat} d_{\defbd^\feat} \,  (z_0)_{|\defbd^\feat} \dd{s} - \sum_{\feat \in \dirdirfeatset} \int_{\defbd^\feat} \neumop(\nabla z_0) \, d_{\defbd^\feat} \dd{s}
\\
&- \sum_{\feat \in \dirintfeatset} \int_{\defbd^\feat} \neumop(\nabla z_0) \, d_{\defbd^\feat} \dd{s} - \sum_{\feat \in \dirneumfeatset} \int_{\defbd^\feat} \neumop(\nabla z_0) \, d_{\defbd^\feat} \dd{s}.
\end{nalign}

With this corrector at hand, we can prove the following result:
\begin{theorem}
\label{thm:poisson:reliability of goal-oriented estimate}
Let $\domain$ be a Lipschitz domain with a set of negative isotropic Lipschitz features $\featset$. Let $L \in V(\domain)'$ and $L_0 \in V_0(\domain)'$ be linear QoIs such that \cref{assumption:defeatured qoi} is satisfied. Furthermore, let $u$ and $u_0$ be the solutions to the exact problem \cref{eq:poisson:exact problem} and defeatured problem \cref{eq:poisson:defeatured problem}, respectively, and $z_0$ the solution to the defeatured dual problem \cref{eq:poisson:dual problems}. Then, the following estimate holds:
\begin{align*}
    |L(u) - L_0(u_0) - R_L^\defbd(u_0, z_0)| \lesssim \multidefest{u_0}\, \multidefest{z_0}.
\end{align*}
\end{theorem}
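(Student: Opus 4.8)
The key identity to establish is that
\[
L(e) - R_L^\defbd(u_0, z_0) = \sum_{\feat} \Big(\text{boundary integral of } d_{\defbd^\feat} \text{ against } \epsilon \text{ or } \neumop(\nabla \epsilon)\Big),
\]
where $\epsilon = z - (z_0)_{|\domain}$ is the dual defeaturing error. Indeed, subtracting \cref{eq:poisson:corrector} from \cref{eq:poisson:qoi error representation} term by term, the Neumann-feature contributions combine to $\sum_{\feat \in \neumfeatset}\int_{\defbd^\feat} d_{\defbd^\feat}\,\epsilon_{|\defbd^\feat}\dd{s}$, while each Dirichlet-feature contribution (for $\feat$ in $\dirdirfeatset$, $\dirintfeatset$, or $\dirneumfeatset$) becomes $-\int_{\defbd^\feat}\neumop(\nabla \epsilon)\,d_{\defbd^\feat}\dd{s}$, using linearity of $\neumop(\nabla\,\cdot\,)$. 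Thus the left-hand side of the theorem is exactly a sum of the same type of boundary integrals that appear in the energy-norm error representation \cref{eq:poisson:error representation}, but now with the \emph{dual} error $\epsilon$ (or its Neumann trace) in place of $e$.

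**Bounding each term.** Once this structure is in place, I would apply verbatim the same feature-wise estimates already invoked in the energy-norm analysis. For a Neumann feature, Cauchy--Schwarz on $\int_{\defbd^\feat} d_{\defbd^\feat}\,\epsilon_{|\defbd^\feat}\dd{s}$ followed by the Poincaré-type inequalities separates $d_{\defbd^\feat}$ (yielding $\defestn{u_0}{\neumdefbd^\feat}$) from a factor controlled by $\honeseminorm{\epsilon}{\domain}$. For each Dirichlet feature type, the bound
\[
\Big|\int_{\defbd^\feat}\neumop(\nabla \epsilon)\,d_{\defbd^\feat}\dd{s}\Big| \lesssim \defest{u_0}{\dirdefbd^\feat}\,\honeseminorm{\epsilon}{\domain}
\]
with the appropriate estimator ($\defestdd{}{}$, $\defestint{}{}$, or $\defestdn{}{}$) holds by exactly the argument sketched between \cref{eq:poisson:error representation} and \cref{eq:poisson:multifeature estimator}, since that argument never used that the function playing the role of $e$ was the \emph{primal} error — only that it solves a homogeneous Laplace problem with the stated homogeneous boundary conditions off the defeatured boundaries, which $\epsilon$ also does by the dual error PDE (obtained by subtracting the defeatured dual problem from the exact one, analogously to \cref{eq:poisson:error pde}). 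Summing over all features and applying Cauchy--Schwarz over the feature index gives
\[
|L(e) - R_L^\defbd(u_0,z_0)| \lesssim \honeseminorm{\epsilon}{\domain}\,\multidefest{u_0}.
\]

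**Closing the estimate.** Finally, I invoke the already-stated bound $\honeseminorm{\epsilon}{\domain} \lesssim \multidefest{z_0}$ — which is just the reliability of the multi-feature estimator applied to the dual defeaturing error — to conclude
\[
|L(u) - L_0(u_0) - R_L^\defbd(u_0,z_0)| = |L(e) - R_L^\defbd(u_0,z_0)| \lesssim \multidefest{u_0}\,\multidefest{z_0},
\]
where the first equality is \cref{assumption:defeatured qoi}.

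**Main obstacle.** The routine steps are the Cauchy--Schwarz/Poincaré estimates, already done elsewhere. The one point requiring genuine care is the bookkeeping in the term-by-term subtraction: one must check that the duality pairings $\langle\neumop(\nabla z),\cdot\rangle$ and $\langle\neumop(\nabla z_0),\cdot\rangle$ indeed combine into $\langle\neumop(\nabla\epsilon),\cdot\rangle$ on each $\defbd^\feat$ — this uses the assumed $H^{3/2}$-regularity so that the pairings are genuine $L^2$ integrals and linearity is immediate — and that the exact-dual boundary conditions in \cref{eq:poisson:dual problems} are precisely what makes all non-defeatured boundary pieces drop out of \cref{eq:poisson:qoi error representation}, mirroring the reduction of $\int_{\partial\domain}\partial_n e\,e\dd{s}$ to the four feature sums in \cref{eq:poisson:error representation}. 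A secondary subtlety is that the primal and dual problems have homogeneous boundary data relative to different reference functions, so one should verify that $\epsilon$ inherits the homogeneous conditions needed on $\rmbd_D\setminus\overline{\completedirdefbd}$ and $\rmbd_N\setminus\overline{\completeneumdefbd}$; this follows because both $z$ and $z_0$ carry zero Dirichlet/Neumann data there.
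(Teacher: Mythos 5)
Your proposal is correct and follows essentially the same route as the paper: subtract the corrector from the QoI error representation term by term so that $z$ and $z_0$ combine into the dual error $\epsilon$, observe that the resulting boundary integrals have the same structure as those in the energy-norm representation with $\epsilon$ in place of $e$, apply the feature-wise estimates to get the factor $\multidefest{u_0}\,\honeseminorm{\epsilon}{\domain}$, and close with the dual energy-norm reliability $\honeseminorm{\epsilon}{\domain} \lesssim \multidefest{z_0}$. The bookkeeping subtleties you flag (linearity of the Neumann trace under the $H^{3/2}$ assumption and the homogeneous conditions inherited by $\epsilon$) are exactly the points the paper relies on implicitly.
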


\begin{proof}
Using the error representation \cref{eq:poisson:qoi error representation} and the corrector term from \cref{eq:poisson:corrector}, we obtain

\begin{align*}
L(e) - R_L^\defbd(u_0, z_0) &= \sum_{\feat \in \neumfeatset} \int_{\defbd^\feat} d_{\defbd^\feat} \,  \epsilon_{|\defbd^\feat} \dd{s} - \sum_{\feat \in \dirdirfeatset} \int_{\defbd^\feat} \neumop(\nabla \epsilon) \,  d_{\defbd^\feat}  \dd{s}
\\
&- \sum_{\feat \in \dirintfeatset} \int_{\defbd^\feat} \neumop(\nabla \epsilon) \,  d_{\defbd^\feat}  \dd{s} - \sum_{\feat \in \dirneumfeatset} \int_{\defbd^\feat} \neumop(\nabla \epsilon) \,  d_{\defbd^\feat}  \dd{s}.
\end{align*}
Now, observe that the integrals in the sums above have the same structure as the ones in \cref{eq:poisson:qoi error representation}.
Hence, with the same arguments from Buffa et al.\cite{buffa_analysis-aware_2022} and Weder and Buffa\cite{weder_analysis-aware_2025}, we find
\begin{align*}
\label{eq:poisson:qoi reliability}
|L(e) - R_L^\defbd(u_0, z_0)| \lesssim \honeseminorm{\epsilon}{\domain} \left (
    \sum_{\feat \in \neumfeatset} \defestn{u_0}{\neumdefbd^\feat}^2
    +
    \sum_{\feat \in \dirdirfeatset} \defestdd{u_0}{\dirdefbd^\feat}^2 
    \right .
    \\
    \left .
    +
    \sum_{\feat \in \dirintfeatset} \defestint{u_0}{\dirdefbd^\feat}^2
    +
    \sum_{\feat \in \dirneumfeatset} \defestdn{u_0}{\dirdefbd^\feat}^2 
\right)^{\frac{1}{2}}
\lesssim
\multidefest{u_0}\, \multidefest{z_0}.
\end{align*}
\end{proof}

\begin{remark}
    In light of \cref{thm:poisson:reliability of goal-oriented estimate}, we observe that apart from ensuring the reliability of the goal-oriented estimate, the correction term $R_L^\defbd(u_0, z_0)$ provides a first-order correction of the defeatured QoI $L_0(u_0)$. That is, if the product $\multidefest{u_0}\, \multidefest{z_0}$ is small enough, the estimate implies that the approximation
    \begin{align*}
        L(u) \approx L_0(u_0) + R_L^\defbd(u_0, z_0),
    \end{align*}
    is accurate.
    We will refer to the right-hand side as the \emph{corrected QoI}.
\end{remark}

The same structure now naturally extends to the linear elasticity and Stokes' equations.

\section{Defeaturing the linear elasticity equation}
\label{sec:elasticity}
In this section, we present the general defeaturing problem for the linear elasticity equations for mixed negative Dirichlet and Neumann features. It closely follows \cref{sec:poisson}, Antolín and Chanon\cite{antolin_analysisaware_2024} and Chanon\cite{chanon_adaptive_2022}, where the case of Neumann features was addressed.

 For a function $\bv: \domain \to \R^n$, we define the \emph{linearized strain rate tensor} and the \emph{Cauchy stress tensor}, respectively, by
\begin{align*}
    \strain(\bv) \coloneqq \frac{1}{2} (\nabla \bv + \nabla \bv^\top), && \text{ and } && \stress(\bv) \coloneqq 2 \mu \strain(\bv) + \lambda (\nabla \cdot \bv) \mathbbm{I}_n,
\end{align*}
where $\mu$ and $\lambda$ denote the Lamé constants. We recall that due to thermodynamic stability, $\mu > 0$ and $\lambda + \frac{2}{3}\mu > 0$.
Let $\bg \in \htracevec{\rmbd_D}$, $\bh \in \Ltwovec{\rmbd_N}$, and $\bff \in \Ltwovec{\domain}$. Then the linear elasticity equations in the exact domain read:
\begin{equation}
\label{eq:linear elasticity:exact problem}
\begin{cases}
    -\nabla \cdot \stress(\bu) = \bff & \text{ in } \domain,\\
    \bu = \bg & \text{ on } \rmbd_D,\\
    \stress(\bu) \bn = \bh & \text{ on } \rmbd_N,
\end{cases}
\end{equation}
where $\bn$ denotes the unitary outward normal of $\partial\domain$. The corresponding weak formulation reads: Find $\bu \in V(\domain)\coloneqq \honevecbd{\domain}{\bg}{\rmbd_D}$, such that for all $\bv \in \honevecbd{\domain}{\boldsymbol{0}}{\rmbd_D}$,
\begin{align}
\label{eq:linear elasticity:weak form}
\int_\domain \stress(\bu) : \strain(\bv) \dd{x} = \int_\domain \bff \cdot \bv \dd{x} + \int_{\rmbd_N} \bh \cdot \bv \dd{s}.
\end{align} 
The energy norm corresponding to problem \cref{eq:linear elasticity:weak form} is defined by 
\begin{align*}
    \energynorm{\bv}{\domain} \coloneqq \left(\int_\domain \stress(\bv) : \strain(\bv) \dd{x}\right)^{\frac 1 2}, & & \forall \bv \in V(\domain).
\end{align*}

For the defeatured domain $\defdomain$, we write $\bff$ for an $L^2$-extension of the source term in \cref{eq:linear elasticity:exact problem} by abuse of notation.
Once we have chosen extensions $\bg_{0} \in \htracevec{\rmbd_{0, D}}$ and $\bh_0 \in \Ltwovec{\rmbd_{0, N}}$ of $\bg$ and $\bh$, respectively, the defeatured elasticity problem reads:
\begin{align}
\label{eq:linear elasticity:defeatured problem}
\begin{cases}
    -\nabla \cdot \stress( \bu_0) = \bff & \text{ in } \defdomain,\\
     \bu_0 = \bg_{0} & \text{ on } \dirrmbd,\\
    \stress(\bu_0) \bn = \bh_0 & \text{ on } \neumrmbd.
\end{cases}
\end{align}
The exact problem \cref{eq:linear elasticity:exact problem} and its defeatured counterpart \cref{eq:linear elasticity:defeatured problem} are well-posed in $V(\domain)$ and $V_0(\defdomain) \coloneqq \honevecbd{\defdomain}{\bg_0}{\dirrmbd}$, respectively.

Similar to the Poisson problem, we define the vector-valued defeaturing error 
$\be \coloneqq \bu - ( \bu_0)_{|\domain}$. 
For each feature $\feat$, we introduce the boundary error as 
\begin{align*}
    \bd_{\defbd^\feat} \coloneqq 
    \begin{cases}
        \bg_{|\defbd^\feat} - (\bu_0)_{|\defbd^\feat}, & \feat \in \dirfeatset, \\[0.5ex]
        \bh_{|\defbd^\feat} - \stress(\bu_0)\bn_{|\defbd^\feat}, & \feat \in \neumfeatset. 
    \end{cases}
\end{align*}
Analogously to the Poisson case in \cref{sec:poisson}, an integration by parts argument yields a general boundary error representation for the elasticity problem:

\begin{nalign}
\label{eq:elasticity:error representation}
\energynorm{\be}{\domain}^2 = \sum_{\feat \in \dirdirfeatset} \int_{\defbd^\feat} (\stress(\be) \bn) \cdot \bd_{\dirdefbd^F}  \dd{s} + \sum_{\feat \in \dirintfeatset} \int_{\defbd^\feat} (\stress(\be) \bn) \cdot \bd_{\dirdefbd^F}  \dd{s}\\
+ \sum_{\feat \in \dirneumfeatset} \int_{\defbd^\feat} (\stress(\be) \bn) \cdot \bd_{\dirdefbd^\feat}  \dd{s} 
+
\sum_{\feat \in \neumfeatset} \int_{\defbd^\feat} \bd_{\neumdefbd^\feat} \cdot \be_{|\neumdefbd^\feat}  \dd{s}.
\end{nalign}

Here we make the same regularity assumption that $\stress(\be)\bn \in \Ltwovec{\partial \domain}$ as for the Poisson problem for simplicity. However, the analysis can be conducted under minimal regularity assumptions; see \cref{sec:linear elasticity:reliability}.
The boundary representation \cref{eq:elasticity:error representation} 
naturally suggests the following computable multi-feature estimator:
\begin{nalign}
\label{eq:elasticity:multifeature estimator}
\multidefest{\bu_0} \coloneqq \left(
        \sum_{\feat \in \dirdirfeatset} \defestdd{\bu_0}{\dirdefbd^\feat}^2 
        + \sum_{\feat \in \dirintfeatset} \defestint{\bu_0}{\dirdefbd^\feat}^2
        \right .
        \\
        \left .
        + \sum_{\feat \in \dirneumfeatset} \defestdn{\bu_0}{\dirdefbd^\feat}^2 
        + \sum_{\feat \in \neumfeatset} \defestn{\bu_0}{\neumdefbd^\feat}^2
    \right)^{\frac{1}{2}},
\end{nalign}
with the feature-wise estimators given by
\begin{align}
\label{eq:elasticity:neumann feature estimator}
\defestn{\bu_0}{\neumdefbd^\feat} &\coloneqq \left(|\neumdefbd^\feat|^{\frac{1}{n-1}} \Ltwonorm{\bd_{\dirdefbd^\feat} - \avg{\bd_{\dirdefbd^\feat}}{\dirdefbd^\feat}}{\dirdefbd^\feat}^2 + c_{\neumdefbd^\feat}^2 |\neumdefbd^\feat|^{\frac{n}{n-1}} \vecnorm{\avg{\bd_{\dirdefbd^\feat}}{\dirdefbd^\feat}}^2 \right)^{\frac{1}{2}},
\\
\label{eq:elasticity:dirdir feature estimator}
\defestdd{\bu_0}{\dirdefbd^\feat} &\coloneqq 2\sqrt{ \Ltwonorm{\bd_{\dirdefbd^\feat}}{\dirdefbd^\feat} \Ltwonorm{\nabla_t \bd_{\dirdefbd^\feat}}{\dirdefbd^\feat}},
\\
\label{eq:elasticity:dirneum feature estimator}
\defestdn{\bu_0}{\dirdefbd^\feat} &\coloneqq 2\sqrt{ \Ltwonorm{\bd_{\dirdefbd^\feat} - \avg{\bd_{\dirdefbd^\feat}}{\dirdefbd^\feat}}{\dirdefbd^\feat} \Ltwonorm{\nabla_t \bd_{\dirdefbd^\feat}}{\dirdefbd^\feat}} + 2|\dirdefbd^\feat|^{\frac{n - 2}{2(n-1)}}\vecnorm{\avg{\bd_{\dirdefbd^\feat}}{\dirdefbd^\feat}},
\\
\label{eq:elasticity:dirint feature estimator}
\defestint{\bu_0}{\dirdefbd^\feat} &\coloneqq 2 \sqrt{ \Ltwonorm{\bd_{\dirdefbd^\feat} - \avg{\bd_{\dirdefbd^\feat}}{\dirdefbd^\feat}}{\dirdefbd^\feat} \Ltwonorm{\nabla_t \bd_{\dirdefbd^\feat}}{\dirdefbd^\feat}} + \Bar{c}_{\dirdefbd^\feat}\vecnorm{\avg{\bd_{\dirdefbd^\feat}}{\dirdefbd^\feat}}.
\end{align}
 The reliability of \cref{eq:elasticity:neumann feature estimator} for a Neumann feature was shown by Antolín and Chanon\cite{antolin_analysisaware_2024}. For the proof of reliability of the estimators for Dirichlet features \cref{eq:elasticity:dirdir feature estimator,eq:elasticity:dirneum feature estimator,eq:elasticity:dirint feature estimator}, we refer to \cref{thm:linear elasticity:dirichlet estimator reliability} in \cref{sec:linear elasticity:reliability}. The feature-wise reliability of the estimators thus implies that $\energynorm{\be}{\domain} \lesssim \multidefest{\bu_0}$ like in the case of the Poisson problem.

 Consider now a linear QoI $L \in V(\domain)'$ and a corresponding defeatured QoI $L_0 \in V_0(\domain)'$ satisfying \cref{assumption:defeatured qoi}. Then, similar to the Poisson problem in \cref{sec:poisson}, we define the exact and defeatured dual problems,
\begin{align}
\label{eq:elasticity:dual problems}
\begin{cases}
    -\nabla \cdot \stress( \bz) = L & \text{ in } \domain,\\
     \bz = \boldsymbol{0} & \text{ on } \rmbd_D,\\
    \stress(\bz) \bn = \boldsymbol{0} & \text{ on } \rmbd_N,
\end{cases}
&& \text{ and } &&
\begin{cases}
    -\nabla \cdot \stress( \bz_0) = L_0 & \text{ in } \defdomain,\\
    \bz_0 = \boldsymbol{0} & \text{ on } \dirrmbd,\\
    \stress(\bz_0) \bn = \boldsymbol{0} & \text{ on } \neumrmbd.
\end{cases}
\end{align}
For the dual defeaturing error $\boldsymbol{\epsilon} \coloneqq \bz - (\bz_0)_\domain$, the energy-norm estimate $\energynorm{\boldsymbol{\epsilon}}{\domain} \lesssim \multidefest{\bz_0}$ also holds.

In analogy to the corrector term \cref{eq:poisson:corrector} for the Poisson problem, we define

\begin{align*}
    R_L^\defbd(\bu_0, \bz_0)  &\coloneqq \sum_{\feat \in \neumfeatset} \int_{\defbd^\feat} \bd_{\defbd^\feat} \cdot (\bz_0)_{|\defbd^\feat} \dd{s} - \sum_{\feat \in \dirdirfeatset} \int_{\defbd^\feat} (\stress(\bz_0)\bn) \cdot \bd_{\defbd^\feat} \dd{s}
    \\
    &- \sum_{\feat \in \dirintfeatset} \int_{\defbd^\feat} (\stress(\bz_0)\bn) \cdot \bd_{\defbd^\feat} \dd{s} - \sum_{\feat \in \dirneumfeatset} \int_{\defbd^\feat} (\stress(\bz_0)\bn) \cdot \bd_{\defbd^\feat} \dd{s}.
\end{align*}
Then, the same reasoning as for the Poisson equation yields the following result:
\begin{theorem}
\label{thm:linear elasticity:reliability of goal-oriented estimate}
Let $\domain$ be a Lipschitz domain with a set of negative isotropic Lipschitz features $\featset$. Let $L \in V(\domain)'$ and $L_0 \in V_0(\domain)'$ be linear QoIs such that \cref{assumption:defeatured qoi} is satisfied. Furthermore, let $\bu$ and $\bu_0$ be the solutions to the exact problem \cref{eq:linear elasticity:exact problem} and defeatured problem \cref{eq:linear elasticity:defeatured problem}, respectively, and $\bz_0$ the solution to the defeatured dual problem \cref{eq:elasticity:dual problems}. Then, the following estimate holds:
\begin{align*}
    |L(\bu) - L_0(\bu_0) - R_L^\defbd(\bu_0, \bz_0)| \lesssim \multidefest{\bu_0}\, \multidefest{\bz_0}.
\end{align*}
\end{theorem}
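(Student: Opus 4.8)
The plan is to replicate the argument in the proof of \cref{thm:poisson:reliability of goal-oriented estimate} line by line, replacing the scalar objects by their vector-valued elasticity counterparts and the Laplacian by the elasticity operator $-\nabla\cdot\stress(\cdot)$. First I would use \cref{assumption:defeatured qoi} to reduce the quantity of interest error to a single functional evaluation, $L(\bu)-L_0(\bu_0)=L(\bu)-L((\bu_0)_{|\domain})=L(\be)$. Next, $\be$ solves the homogeneous elasticity error problem obtained by subtracting \cref{eq:linear elasticity:defeatured problem}, restricted to $\domain$, from \cref{eq:linear elasticity:exact problem} (the exact analogue of \cref{eq:poisson:error pde}, with $\be=\bd_{\defbd^\feat}$ on the Dirichlet defeatured boundaries and $\stress(\be)\bn=\bd_{\defbd^\feat}$ on the Neumann defeatured boundaries). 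Combining this with the exact dual problem \cref{eq:elasticity:dual problems} and performing two successive integrations by parts (Betti reciprocity for the elasticity operator) gives
\begin{align*}
L(\be)=\int_\domain\bigl(-\nabla\cdot\stress(\bz)\bigr)\cdot\be\dd{x}=\int_{\partial\domain}(\stress(\be)\bn)\cdot\bz\dd{s}-\int_{\partial\domain}(\stress(\bz)\bn)\cdot\be\dd{s},
\end{align*}
where the interior term $\int_\domain\stress(\bz):\strain(\be)\dd{x}$ is symmetrized and $-\nabla\cdot\stress(\be)=\boldsymbol{0}$ in $\domain$ is used. Inserting the boundary conditions of $\be$ and the homogeneous boundary conditions of $\bz$, the boundary integrals collapse onto the feature boundaries, yielding an error representation fully analogous to \cref{eq:poisson:qoi error representation}: a sum of $\int_{\defbd^\feat}\bd_{\neumdefbd^\feat}\cdot\bz_{|\defbd^\feat}\dd{s}$ over $\feat\in\neumfeatset$ and $-\int_{\defbd^\feat}(\stress(\bz)\bn)\cdot\bd_{\defbd^\feat}\dd{s}$ over the three classes $\dirdirfeatset$, $\dirintfeatset$, $\dirneumfeatset$.

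I would then subtract the corrector $R_L^\defbd(\bu_0,\bz_0)$: since it has precisely this structure with $\bz$ replaced by $(\bz_0)_{|\domain}$, the difference $L(\be)-R_L^\defbd(\bu_0,\bz_0)$ is obtained by replacing $\bz$ throughout with the dual defeaturing error $\boldsymbol{\epsilon}\coloneqq\bz-(\bz_0)_{|\domain}$. Each resulting term, namely $\int_{\defbd^\feat}\bd_{\neumdefbd^\feat}\cdot\boldsymbol{\epsilon}_{|\defbd^\feat}\dd{s}$ and $\int_{\defbd^\feat}(\stress(\boldsymbol{\epsilon})\bn)\cdot\bd_{\defbd^\feat}\dd{s}$, has exactly the same form as the corresponding term in the energy-norm error representation \cref{eq:elasticity:error representation} with $\be$ replaced by $\boldsymbol{\epsilon}$. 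Consequently, the feature-wise bounds used to prove energy-norm reliability apply verbatim — the Cauchy–Schwarz step followed by the appropriate Poincaré-type inequality on $\defbd^\feat$, as established for Neumann features by Antolín and Chanon\cite{antolin_analysisaware_2024} and for the three Dirichlet feature types in \cref{thm:linear elasticity:dirichlet estimator reliability} — giving, for each feature, a bound of the form $\mathcal{E}^{\defbd^\feat}(\bu_0)\,\energynorm{\boldsymbol{\epsilon}}{\domain}$, where $\mathcal{E}^{\defbd^\feat}(\bu_0)$ is the estimator expression matching the feature type. Crucially, $\mathcal{E}^{\defbd^\feat}(\cdot)$ depends only on the primal boundary errors $\bd_{\defbd^\feat}$ and hence on $\bu_0$, while the norm factor carries $\boldsymbol{\epsilon}$ and hence $\bz_0$.

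Summing over all features, applying a discrete Cauchy–Schwarz inequality in the feature index, and recognising the definition of $\multidefest{\bu_0}$ in \cref{eq:elasticity:multifeature estimator}, I obtain $|L(\be)-R_L^\defbd(\bu_0,\bz_0)|\lesssim\multidefest{\bu_0}\,\energynorm{\boldsymbol{\epsilon}}{\domain}$. Finally, invoking the energy-norm estimate $\energynorm{\boldsymbol{\epsilon}}{\domain}\lesssim\multidefest{\bz_0}$ for the dual problem (which follows, as noted after \cref{eq:elasticity:dual problems}, from the feature-wise reliability applied to the dual defeaturing error) yields $|L(\bu)-L_0(\bu_0)-R_L^\defbd(\bu_0,\bz_0)|=|L(\be)-R_L^\defbd(\bu_0,\bz_0)|\lesssim\multidefest{\bu_0}\,\multidefest{\bz_0}$, which is the claim.

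I expect the only real obstacle to be technical rather than conceptual: rigorously justifying the two integrations by parts for the elasticity operator when $\be$ and $\boldsymbol{\epsilon}$ are merely in $\hone{\domain}$, so that the boundary quantities $(\stress(\be)\bn)\cdot\bz$ and $(\stress(\boldsymbol{\epsilon})\bn)\cdot\bd_{\defbd^\feat}$ — in particular on the Dirichlet–Neumann and internal feature boundaries — are well-defined, and so that the averages subtracted in the corresponding estimator expressions make sense. Under the regularity assumption $\stress(\be)\bn,\stress(\boldsymbol{\epsilon})\bn\in\Ltwovec{\partial\domain}$ stated before \cref{eq:elasticity:error representation} this is immediate; in the general low-regularity case one reuses the limiting/duality argument detailed in \cref{sec:linear elasticity:reliability} (mirroring Weder and Buffa\cite{weder_analysis-aware_2025}) to interpret the traces and the $\hone$–$\htrace$ pairings, after which every subsequent step goes through unchanged.
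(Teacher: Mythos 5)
Your proposal is correct and follows essentially the same route as the paper, which proves \cref{thm:linear elasticity:reliability of goal-oriented estimate} by transposing the Poisson argument of \cref{thm:poisson:reliability of goal-oriented estimate} verbatim: reduce to $L(\be)$ via \cref{assumption:defeatured qoi}, derive the boundary error representation by two integrations by parts against the exact dual solution, subtract the corrector so that $\bz$ is replaced by $\boldsymbol{\epsilon}$, and then reuse the feature-wise energy-norm reliability bounds (Antolín--Chanon for Neumann features, \cref{thm:linear elasticity:dirichlet estimator reliability} for the Dirichlet types) together with $\energynorm{\boldsymbol{\epsilon}}{\domain}\lesssim\multidefest{\bz_0}$. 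Your closing remark on the low-regularity justification of the traces via the partial Neumann operator of \cref{sec:linear elasticity:reliability} is exactly the technical caveat the paper handles there.
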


\section{Defeaturing the Stokes equations}
\label{sec:stokes}
This section addresses the general defeaturing problem for the Stokes equations for mixed negative Dirichlet and Neumann features. It closely follows \cref{sec:poisson,sec:elasticity}.

To that end, let $\strain(\bv)$ denote again the linearized strain rate tensor for a function $\bv: \domain \to \R^n$. In contrast to \cref{sec:elasticity}, we denote by $\stress(\bv)$ be the viscous stress tensor of the fluid, which for a Newtonian fluid is given by
\begin{align*}
    \stress(\bv) = 2 \mu \strain(\bv) + \lambda (\nabla \cdot \bv) \mathbbm{I}_n,
\end{align*}
with this time the constants $\mu > 0$ and $\lambda \geq 0$ denoting the bulk and dynamic viscosities, respectively. Let $\bg \in \htracevec{\rmbd_D}$, $\bh \in \Ltwovec{\rmbd_N}$, $\bff \in \Ltwovec{\domain}$, and $f_c \in \Ltwo{\domain}$. Then, the Stokes problem in the exact domain is given by
\begin{align}
\label{eq:stokes:exact problem}
\begin{cases}
    -\nabla \cdot \stress(\bu) + \nabla p = \bff & \text{ in } \domain,\\
    \nabla \cdot \bu = f_c & \text{ in } \domain,\\
    \bu = \bg & \text{ on } \rmbd_D,\\
    \stress(\bu) \bn - p \bn = \bh & \text{ on } \rmbd_N,
\end{cases}
\end{align}
where $\bn$ denotes the unitary outward normal of $\partial\domain$. The corresponding mixed weak formulation reads: Find $\bu \in V(\domain) \coloneqq \honevecbd{\domain}{\bg}{\rmbd_D}$ and $p \in Q(\domain) \coloneqq \Ltwo{\domain}$, such that for all $\bv \in \honevecbd{\domain}{\boldsymbol{0}}{\rmbd_D}$ and $q \in \Ltwo{\domain}$,
\begin{align}
\label{eq:stokes:weak form:momentum}
&a_\domain(\bu, \bv) + b_\domain(\bv, p) = \int_\domain \bff \cdot \bv \dd{x} + \int_\domain \bg_N \cdot \bv \dd{s},
\\
\label{eq:stokes:weak form:pressure}
&b_\domain(\bu, q) = -\int_\domain f_c \, q \dd{x},
\end{align}
where the bilinear forms $a_\domain: \honevec{\domain} \times \honevec{\domain} \to \R$ and $b_\domain: \honevec{\domain}\times \Ltwo{\domain} \to \R$ are given by
\begin{align*}
    a_\domain(\bu, \bv) \coloneqq \int_\domain \stress(\bu):\strain(\bv) \dd{x}, && \text{ and } && b_\domain(\bv, q) \coloneqq -\int_\domain \nabla \cdot \bv q \dd{x}.
\end{align*}
This problem is well-posed in $V(\domain) \times Q(\domain)$; see Boffi et al.\cite{boffi_mixed_2013}. The energy norm associated with the weak formulation \cref{eq:stokes:weak form:momentum,eq:stokes:weak form:pressure} is given by
\begin{align*}
    \energynorm{(\bv, q)}{\domain} \coloneqq a_\domain(\bv, \bv)^{\frac{1}{2}} + \Ltwonorm{q}{\domain}, & & \forall \bv \in \honevec{\domain}, q \in \Ltwo{\domain}.
\end{align*}

For the defeatured domain $\defdomain$, we write $\bff$ and $f_c$ for $L^2$-extensions of the original source terms in \cref{eq:stokes:exact problem} by abuse of notation. Once we have chosen extensions $\bg_{0} \in \htracevec{\dirrmbd}$ and $\bh_0 \in \Ltwo{\neumrmbd}$ of $\bg$ and $\bh$, respectively, the defeatured Stokes problem reads:
\begin{align}
\label{eq:stokes:defeatured problem}
\begin{cases}
    -\nabla \cdot \stress(\bu_0) + \nabla p_0 = \bff & \text{ in } \defdomain,\\
    \nabla \cdot \bu_0 = f_c & \text{ in } \defdomain,\\
    \bu_0 = \bg_0 & \text{ on } \dirrmbd,\\
    \stress(\bu_0) \bn - p_0 \bn = \bh_0 & \text{ on } \neumrmbd.
\end{cases}
\end{align}
This problem is well-posed in $V_0(\defdomain) \times Q_0(\defdomain)$ with $V_0(\defdomain) \coloneqq \honevecbd{\defdomain}{\bg_0}{\dirrmbd}$ and $Q_0(\defdomain) \coloneqq \Ltwo{\defdomain}$.

The defeaturing error functions in the velocity and pressure variables are defined by $\be_u \coloneqq \bu - (\bu_0)_{|\domain}$ and $e_p \coloneqq p - (p_0)_{|\domain}$, respectively.
Moreover, we define the boundary error for each feature by
\begin{align*}
    \bd_{\defbd^\feat} \coloneqq 
    \begin{cases}
        \bg_{|\defbd^\feat} - (\bu_0)_{|\defbd^\feat}, & \feat \in \dirfeatset, \\[0.5ex]
        \bh_{|\defbd^\feat} - (\stress(\bu_0)\bn_{|\defbd^\feat} - p_0 \bn_{|\defbd^\feat}), & \feat \in \neumfeatset. 
    \end{cases}
\end{align*}
As in the Poisson and elasticity cases, an integration by parts argument shows that the defeaturing error can be expressed entirely through boundary terms:

\begin{nalign}
\label{eq:stokes:error representation}
\energynorm{(\be_u, e_p)}{\domain}^2 &= \sum_{\feat \in \dirdirfeatset} \int_{\defbd^\feat} (\stress(\be_u) \bn - e_p \bn)\cdot \bd_{\dirdefbd^F} \dd{s}
\\
&+ \sum_{\feat \in \dirintfeatset} \int_{\defbd^\feat} (\stress(\be_u) \bn - e_p \bn) \cdot \bd_{\dirdefbd^F} \dd{s}
\\
&+ \sum_{\feat \in \dirneumfeatset} \int_{\defbd^\feat} (\stress(\be_u) \bn - e_p \bn) \cdot \bd_{\dirdefbd^\feat} \dd{s} + \sum_{\feat \in \neumfeatset} \int_{\defbd^\feat} \bd_{\neumdefbd^\feat} \cdot \be_{|\neumdefbd^\feat} \dd{s}.
\end{nalign}
Here, we again make the regularity assumption that $\stress(\be_u) \bn - e_p \bn \in \Ltwovec{\partial \domain}$. However, the analysis can be conducted under minimal regularity conditions; see \cref{sec:stokes:reliability}.

In direct analogy with the Poisson and elasticity cases, the boundary 
representation motivates the following multi-feature estimator:
\begin{nalign}
\label{eq:stokes:multifeature estimator}
\multidefest{\bu_0, p_0} \coloneqq \left(
        \sum_{\feat \in \dirdirfeatset} \defestdd{\bu_0}{\dirdefbd^\feat}^2
        + \sum_{\feat \in \dirintfeatset} \defestint{\bu_0}{\dirdefbd^\feat}^2
        \right .
        \\
        \left .
        + \sum_{\feat \in \dirneumfeatset} \defestdn{\bu_0}{\dirdefbd^\feat}^2 
        + \sum_{\feat \in \neumfeatset} \defestn{\bu_0, p_0}{\neumdefbd^\feat}^2
    \right)^{\frac{1}{2}},
\end{nalign}
with the feature-wise estimators
\begin{align}
\label{eq:stokes:neumann feature estimator}
\defestn{\bu_0, p_0}{\neumdefbd^\feat} &\coloneqq \left(|\neumdefbd^\feat|^{\frac{1}{n-1}} \Ltwonorm{\bd_{\dirdefbd^\feat} - \avg{\bd_{\dirdefbd^\feat}}{\dirdefbd^\feat}}{\dirdefbd^\feat}^2 + c_{\neumdefbd^\feat}^2 |\neumdefbd^\feat|^{\frac{n}{n-1}} \vecnorm{\avg{\bd_{\dirdefbd^\feat}}{\dirdefbd^\feat}}^2 \right)^{\frac{1}{2}},
\\
\label{eq:stokes:dirdir feature estimator}
\defestdd{\bu_0}{\dirdefbd^\feat} &\coloneqq 8\sqrt{ \Ltwonorm{\bd_{\dirdefbd^\feat}}{\dirdefbd^\feat} \Ltwonorm{\nabla_t \bd_{\dirdefbd^\feat}}{\dirdefbd^\feat}},
\\
\label{eq:stokes:dirneum feature estimator}
\defestdn{\bu_0}{\dirdefbd^\feat} &\coloneqq 8\sqrt{ \Ltwonorm{\bd_{\dirdefbd^\feat} - \avg{\bd_{\dirdefbd^\feat}}{\dirdefbd^\feat}}{\dirdefbd^\feat} \Ltwonorm{\nabla_t \bd_{\dirdefbd^\feat}}{\dirdefbd^\feat}} + 8|\dirdefbd^\feat|^{\frac{n - 2}{2(n-1)}}\vecnorm{\avg{\bd_{\dirdefbd^\feat}}{\dirdefbd^\feat}},
\\
\label{eq:stokes:dirint feature estimator}
\defestint{\bu_0}{\dirdefbd^\feat} &\coloneqq 8 \sqrt{ \Ltwonorm{\bd_{\dirdefbd^\feat} - \avg{\bd_{\dirdefbd^\feat}}{\dirdefbd^\feat}}{\dirdefbd^\feat} \Ltwonorm{\nabla_t \bd_{\dirdefbd^\feat}}{\dirdefbd^\feat}} + 4\Bar{c}_{\dirdefbd^\feat}\vecnorm{\avg{\bd_{\dirdefbd^\feat}}{\dirdefbd^\feat}}.
\end{align}
The reliability of \cref{eq:stokes:neumann feature estimator} for Neumann features was shown by Antolín and Chanon\cite{antolin_analysisaware_2024}.
For the proof of reliability of the estimators for Dirichlet features \cref{eq:stokes:dirdir feature estimator,eq:stokes:dirneum feature estimator,eq:stokes:dirint feature estimator}, we refer to \cref{thm:stokes:dirichlet estimator reliability} in \cref{sec:stokes:reliability}. The reliability for the estimator expressions \cref{eq:stokes:neumann feature estimator,eq:stokes:dirdir feature estimator,eq:stokes:dirneum feature estimator,eq:stokes:dirint feature estimator} immediately imply the reliability of the multi-feature estimator \cref{eq:stokes:multifeature estimator}, i.e. we have $\energynorm{(\be_u, e_p)}{\domain} \lesssim \multidefest{(\bu_0, p_0)}$.

We consider now a linear QoI $L \in V(\domain)'$ for the velocity variable with a defeatured counterpart $L_0 \in V_0(\domain)'$ such that \cref{assumption:defeatured qoi} is satisfied. Then, the exact and defeatured dual problems, respectively, read
\begin{align}
\label{eq:stokes:exact dual problem}
\begin{cases}
    -\nabla \cdot \stress(\bz) + \nabla \zeta = L & \text{ in } \domain,\\
    \nabla \cdot \bz = 0 & \text{ in } \domain,\\
     \bz = \boldsymbol{0} & \text{ on } \rmbd_D,\\
    \stress(\bz) \bn - \zeta \bn = \boldsymbol{0} & \text{ on } \rmbd_N,
\end{cases}
\end{align}
and
\begin{align}
\label{eq:stokes:defeatured dual problem}
\begin{cases}
    -\nabla \cdot \stress(\bz_0) + \nabla \zeta_0 = L_0 & \text{ in } \defdomain,\\
    \nabla \cdot \bz_0 = 0 & \text{ in } \defdomain,\\
    \bz_0 = \boldsymbol{0} & \text{ on } \dirrmbd,\\
    \stress(\bz_0) \bn - \zeta_0 \bn = \boldsymbol{0} & \text{ on } \neumrmbd.
\end{cases}
\end{align}
For Stokes' equations, we define the corrector term

\begin{align*}
    R_L^\defbd(\bu_0, p_0, \bz_0, \zeta_0)  &\coloneqq \sum_{\feat \in \neumfeatset} \int_{\defbd^\feat} \bd_{\defbd^\feat} \cdot (\bz_0)_{|\defbd^\feat} \dd{s}
    \\
    &- \sum_{\feat \in \dirdirfeatset} \int_{\defbd^\feat} (\stress(\bz_0) \bn - \zeta_0 \bn) \cdot \bd_{\defbd^\feat} \dd{s}
    \\
    &- \sum_{\feat \in \dirintfeatset} \int_{\defbd^\feat} (\stress(\bz_0) \bn - \zeta_0 \bn) \cdot \bd_{\defbd^\feat} \dd{s}
    \\
    &- \sum_{\feat \in \dirneumfeatset} \int_{\defbd^\feat} (\stress(\bz_0) \bn - \zeta_0 \bn) \cdot \bd_{\defbd^\feat} \dd{s}.
\end{align*}

Finally, we can repeat the same argument as for the Poisson and linear elasticity problems in \cref{sec:poisson,sec:elasticity} to obtain the following result:
\begin{theorem}
\label{thm:stokes:reliability of goal-oriented estimate}
Let $\domain$ be a Lipschitz domain with negative isotropic Lipschitz features $\featset$. Let $L \in V(\domain)'$ and $L_0 \in V_0(\domain)'$ be linear QoIs such that \cref{assumption:defeatured qoi} is satisfied. Furthermore, let $(\bu, p)$ and $(\bu_0, p_0)$ be the solutions to the exact problem \cref{eq:stokes:exact problem} and defeatured problem \cref{eq:stokes:defeatured problem}, respectively, and $(\bz_0, \zeta_0)$ the solution to the defeatured dual problem \cref{eq:stokes:defeatured dual problem}. Then, the following estimate holds:
\begin{align*}
    |L(\bu) - L_0(\bu_0) - R_L^\defbd(\bu_0, p_0, \bz_0, \zeta_0)| \lesssim \multidefest{\bu_0, p_0}\, \multidefest{\bz_0, \zeta_0}.
\end{align*}
\end{theorem}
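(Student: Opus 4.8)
The plan is to follow the proof of \cref{thm:poisson:reliability of goal-oriented estimate} almost verbatim, the only genuinely new ingredient being the integration-by-parts identity adapted to the saddle-point structure of Stokes' equations. First I would use \cref{assumption:defeatured qoi} to write $L(\bu)-L_0(\bu_0)=L(\bu)-L((\bu_0)_{|\domain})=L(\be_u)$, reducing the statement to an estimate on $L(\be_u)$. Next I would record that the primal error pair $(\be_u,e_p)$ solves the homogeneous Stokes system in $\domain$, namely $-\nabla\cdot\stress(\be_u)+\nabla e_p=\boldsymbol{0}$ and $\nabla\cdot\be_u=0$, with $\be_u=\boldsymbol{0}$ on $\rmbd_D\setminus\overline{\completedirdefbd}$, $\be_u=\bd_{\defbd^\feat}$ on $\defbd^\feat$ for $\feat\in\dirfeatset$, $\stress(\be_u)\bn-e_p\bn=\boldsymbol{0}$ on $\rmbd_N\setminus\overline{\completeneumdefbd}$, and $\stress(\be_u)\bn-e_p\bn=\bd_{\defbd^\feat}$ on $\defbd^\feat$ for $\feat\in\neumfeatset$. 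Since, by \cref{assumption:defeatured qoi}, neither $L$ nor $L_0$ interacts with the features and they therefore agree on $\domain$, the dual error pair $(\boldsymbol{\epsilon},\epsilon_\zeta)\coloneqq(\bz-(\bz_0)_{|\domain},\,\zeta-(\zeta_0)_{|\domain})$ solves the \emph{same} homogeneous Stokes system in $\domain$, with the boundary data of the dual defeaturing error, and the energy-norm reliability applied to the dual defeaturing problem gives $\energynorm{(\boldsymbol{\epsilon},\epsilon_\zeta)}{\domain}\lesssim\multidefest{\bz_0,\zeta_0}$.

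The key step is a boundary representation of $L(\be_u)$ that mirrors \cref{eq:stokes:error representation}. Starting from $L(\be_u)=\int_\domain(-\nabla\cdot\stress(\bz)+\nabla\zeta)\cdot\be_u\dd{x}$ and integrating by parts, the divergence constraint $\nabla\cdot\be_u=0$ eliminates the pressure volume term and yields $L(\be_u)=a_\domain(\bz,\be_u)-\int_{\partial\domain}(\stress(\bz)\bn-\zeta\bn)\cdot\be_u\dd{s}$; multiplying the homogeneous primal momentum equation by $\bz$ and integrating by parts, using $\nabla\cdot\bz=0$, likewise gives $a_\domain(\be_u,\bz)=\int_{\partial\domain}(\stress(\be_u)\bn-e_p\bn)\cdot\bz\dd{s}$. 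By the symmetry of $a_\domain$ these combine to
\begin{align*}
L(\be_u)=\int_{\partial\domain}(\stress(\be_u)\bn-e_p\bn)\cdot\bz\dd{s}-\int_{\partial\domain}(\stress(\bz)\bn-\zeta\bn)\cdot\be_u\dd{s}.
\end{align*}
Inserting the boundary conditions of the exact dual problem ($\bz=\boldsymbol{0}$ on $\rmbd_D$, $\stress(\bz)\bn-\zeta\bn=\boldsymbol{0}$ on $\rmbd_N$) together with those of the primal error system, and splitting over the feature types, reproduces exactly the corrector $R_L^\defbd(\bu_0,p_0,\cdot,\cdot)$ but with the exact dual pair $(\bz,\zeta)$ in place of $(\bz_0,\zeta_0)$. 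Subtracting the corrector therefore gives, with the Dirichlet sum understood as in $R_L^\defbd$,
\begin{align*}
L(\be_u)-R_L^\defbd(\bu_0,p_0,\bz_0,\zeta_0)&=\sum_{\feat\in\neumfeatset}\int_{\defbd^\feat}\bd_{\defbd^\feat}\cdot\boldsymbol{\epsilon}_{|\defbd^\feat}\dd{s}\\
&\phantom{{}={}}-\sum_{\feat\in\dirfeatset}\int_{\defbd^\feat}(\stress(\boldsymbol{\epsilon})\bn-\epsilon_\zeta\bn)\cdot\bd_{\defbd^\feat}\dd{s}.
\end{align*}

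Each integral on the right has precisely the structure of a boundary term in \cref{eq:stokes:error representation}, now with the dual error $(\boldsymbol{\epsilon},\epsilon_\zeta)$ in the role of $(\be_u,e_p)$ and the primal boundary errors $\bd_{\defbd^\feat}$ supplying the data. Hence the feature-wise estimates underlying \cref{eq:stokes:dirdir feature estimator,eq:stokes:dirneum feature estimator,eq:stokes:dirint feature estimator} and the Neumann estimate of Antolín and Chanon\cite{antolin_analysisaware_2024} — Cauchy–Schwarz on $\defbd^\feat$, removal of averages where $\bd_{\defbd^\feat}$ does not vanish at $\partial\feat$, and the relevant Poincaré and trace inequalities — apply verbatim, since $(\boldsymbol{\epsilon},\epsilon_\zeta)$ satisfies the same homogeneous Stokes system in $\domain$ as $(\be_u,e_p)$; they bound each term by the corresponding feature estimator evaluated at $(\bu_0,p_0)$ times the local dual energy near that feature. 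Summing over the pairwise separated features with the discrete Cauchy–Schwarz inequality then yields $|L(\be_u)-R_L^\defbd(\bu_0,p_0,\bz_0,\zeta_0)|\lesssim\multidefest{\bu_0,p_0}\,\energynorm{(\boldsymbol{\epsilon},\epsilon_\zeta)}{\domain}$, and combining with $\energynorm{(\boldsymbol{\epsilon},\epsilon_\zeta)}{\domain}\lesssim\multidefest{\bz_0,\zeta_0}$ completes the proof. I expect the main obstacle to be the boundary representation itself: one must use the incompressibility of both $\be_u$ and $\bz$ so that the continuity equations and all pressure volume integrals cancel, leaving only the combined normal traction $\stress(\cdot)\bn-(\cdot)\bn$ on $\partial\domain$, and one must carry along the standing regularity assumption that $\stress(\be_u)\bn-e_p\bn\in\Ltwovec{\partial\domain}$ (and its analogue for the dual) so that all these boundary integrals are well defined. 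Once the representation is in place, the rest is a transcription of the Poisson and linear elasticity arguments.
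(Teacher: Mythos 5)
Your proposal is correct and follows exactly the route the paper intends: the paper gives no separate proof of \cref{thm:stokes:reliability of goal-oriented estimate} but states that it follows by "the same argument" as \cref{thm:poisson:reliability of goal-oriented estimate,thm:linear elasticity:reliability of goal-oriented estimate}, and your write-up is a faithful execution of that argument — reduction to $L(\be_u)$ via \cref{assumption:defeatured qoi}, the double integration by parts using incompressibility of both $\be_u$ and $\bz$ to obtain the boundary representation with the combined traction $\stress(\cdot)\bn-(\cdot)\bn$, subtraction of the corrector to replace $(\bz,\zeta)$ by $(\boldsymbol{\epsilon},\epsilon_\zeta)$, and reuse of the feature-wise reliability estimates from \cref{sec:stokes:reliability} together with $\energynorm{(\boldsymbol{\epsilon},\epsilon_\zeta)}{\domain}\lesssim\multidefest{\bz_0,\zeta_0}$. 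No gaps.
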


\begin{remark}
    A similar estimate can be obtained by the same argument for a linear QoI satisfying \cref{assumption:defeatured qoi} in the pressure variable. We omit this for brevity.
\end{remark}

\section{Numerical experiments}
\label{sec:numerical experiments}
We now validate the theoretical framework through numerical experiments for the Poisson, linear elasticity, and Stokes problems, assessing the reliability and performance of the goal-oriented estimators. The computational geometries and meshes were created with \texttt{gmsh}\cite{geuzaine_gmsh_2009}. Crucially, \texttt{gmsh} generates meshes that are conformal across lower-dimensional entities, allowing for the simultaneous meshing of the defeatured and exact geometries. In particular, comparing the defeatured to the exact solution is straightforward.

The finite element simulations were implemented with the help of the \texttt{FEniCSx} library\cite{baratta_dolfinx_2023}. Local mesh refinement around the features and higher-order elements were used to ensure that the numerical errors could mostly be neglected. For Stokes' equations, Taylor-Hood elements were used to guarantee stability. However, we emphasize that the error estimation framework presented in this paper does not depend on the discretization method or the geometry,  as long as numerical integration can be performed along the feature boundaries.

The \emph{effectivity index} $\esteff \coloneqq \multidefest{u_0} / \energynorm{u}{\domain}$
will be used to analyze the performance of the estimators. Indeed, according to the theory, this quantity should be asymptotically independent of the feature size in the limit $|\defbd| \to 0$. Furthermore, it captures geometric properties that were neglected in the analysis, such as the curvature of the feature boundary.

\subsection{Poisson's equation}

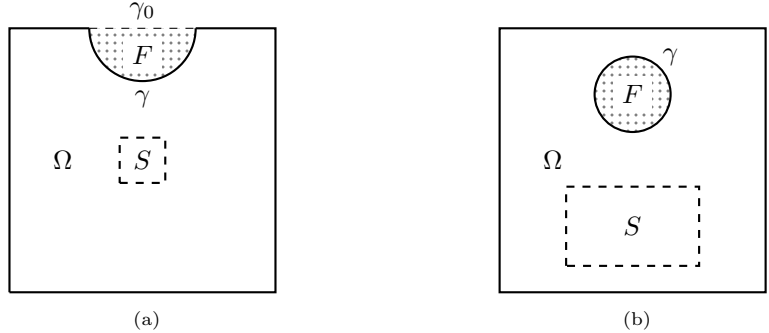
\begin{figure}
    \centering
    \begin{subfigure}[B]{0.45\textwidth}
    \centering
    \begin{tikzpicture}
    \def\length{3.5}
    \def\width{3.5}
    \def\featRadius{0.7}
    \def\qoiSidelength{0.6}


    \coordinate (A) at (0, 0);
    \coordinate (B) at (\length, 0);
    \coordinate (C) at (\length, \width);
    \coordinate (D) at (0, \width);

    \coordinate (m) at (\length / 2, \width);
    \coordinate (i1) at (\length / 2 - \featRadius, \width);
    \coordinate (i2) at (\length / 2 + \featRadius, \width);

    \coordinate (a) at (\length / 2 - \qoiSidelength / 2, \width / 2 - \qoiSidelength / 2);
    \coordinate (b) at (\length / 2 + \qoiSidelength / 2, \width / 2 - \qoiSidelength / 2);
    \coordinate (c) at (\length / 2 + \qoiSidelength / 2, \width / 2 + \qoiSidelength / 2);
    \coordinate (d) at (\length / 2 - \qoiSidelength / 2, \width / 2 + \qoiSidelength / 2);

    \node at (\length / 5, \width / 2) {$\domain$};

    \draw[thick] (A) -- (B) -- (C) -- (i2);
    \draw[thick] (i1) -- (D) -- (A);

    \filldraw[pattern=dots, pattern color=gray, thick] (m) ++ (180:\featRadius) arc (180:360:\featRadius);
    \draw[dashed] (i1) -- (i2);

    \node[fill=white] at (\length / 2, \width - \featRadius / 2) {$\feat$};
    \node[anchor=north] at (\length / 2, \width - \featRadius) {$\defbd$};
    \node[anchor=south] at (\length / 2, \width) {$\simpbd$};
    
    \draw[thick, dashed] (a) -- (b) -- (c) -- (d) -- cycle;

    \node[fill=white] at (\length / 2, \width / 2) {$S$};

\end{tikzpicture}
    \caption{}
    \label{fig:poisson:l2 illustration}
    \end{subfigure}
    \hspace{5mm}
    \begin{subfigure}[B]{0.45\textwidth}
    \centering
    \begin{tikzpicture}
    \def\length{3.5}
    \def\width{3.5}
    \def\featRadius{0.5}


    \coordinate (A) at (0, 0);
    \coordinate (B) at (\length, 0);
    \coordinate (C) at (\length, \width);
    \coordinate (D) at (0, \width);

    \coordinate (F) at (\length / 2, 3 * \width / 4);
    
    \coordinate (a) at (\length / 4, \width / 10);
    \coordinate (b) at (3 * \length / 4, 2 * \width / 5);

    \node at (\length / 5, \width / 2) {$\domain$};

    \draw[thick] (A) -- (B) -- (C) -- (D) -- cycle;

    \filldraw[pattern=dots, pattern color=gray, thick] (F) circle [radius=\featRadius];

    \node[fill=white] at (F) {$\feat$};
    \node[] at (\length / 2 + \featRadius, 3 * \width / 4 + \featRadius) {$\defbd$};
    
    \draw[thick, dashed] (a) rectangle (b);

    \node[fill=white] at (\length / 2,  2.5 *\width / 10) {$S$};

\end{tikzpicture}
    \caption{}
    \label{fig:poisson:green illustration}
    \end{subfigure}
    \caption{Illustration of the geometries for the Poisson experiments. The dashed square $S$ represents the integration region for the QoI, while the dotted areas $\feat$ represent the negative feature cut out from the defeatured domain.}
    \label{fig:poisson:illustrations}
\end{figure}
We start by considering two cases for the Poisson equation in different domains illustrated in \cref{fig:poisson:illustrations}: One in the unit square $\defdomain = [-\tfrac{1}{2}, \tfrac{1}{2}]^2$ with a feature cut out from the boundary (\cref{fig:poisson:l2 illustration}) and another one in the same unit square with an internal feature cut out at the center (\cref{fig:poisson:green illustration}). In both experiments, we prescribe homogeneous Dirichlet boundary conditions on all defeatured boundaries and the source term is given by
\begin{align*}
    f(\bx) = \frac{A}{\sqrt{2 \pi \sigma^2}} \exp\left(-\frac{1}{2 \sigma^2}\vecnorm{\bx}^2\right),
\end{align*}
with $A = 10$ and $\sigma^2 = 0.01$. We extend the source term to $\defdomain$ by the same expression for the defeatured problems.
Furthermore, the QoI is given by the expression
\begin{align*}
    L(v) = \int_S v \dd{\bx},
\end{align*}
where the region $S$ varies between experiments.

\begin{figure}
    \centering
    \begin{subfigure}[T]{0.45\textwidth}
    \includegraphics{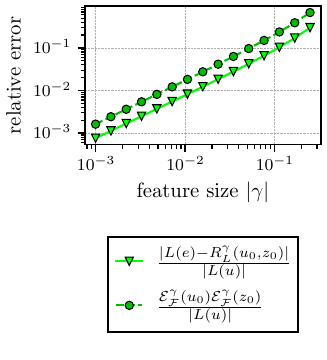}
    \caption{}
    \label{fig:poisson:numerical results:qoi error}
    \end{subfigure}
    \hfill
    \begin{subfigure}[T]{0.45\textwidth}
        \includegraphics{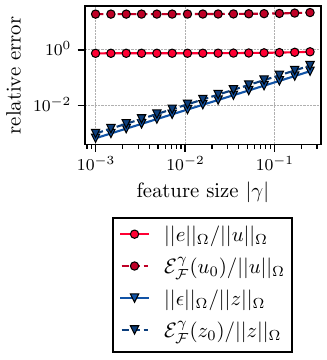}
    \caption{}
    \label{fig:poisson:numerical results:primal dual error}
    \end{subfigure}
    \begin{subfigure}[T]{0.45\textwidth}
        \includegraphics{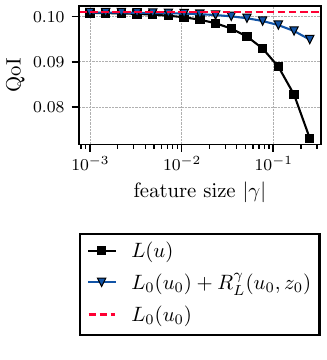}
    \caption{}
    \label{fig:poisson:numerical results:qoi values}
    \end{subfigure}
    \hfill
    \begin{subfigure}[T]{0.45\textwidth}
        \includegraphics{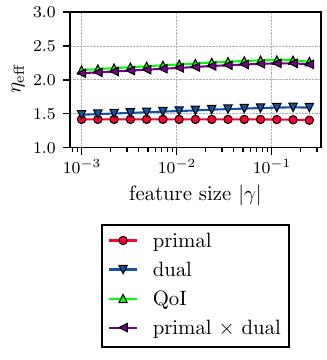}
    \caption{}
    \label{fig:poisson:numerical results:eff}
    \end{subfigure}
    \caption{Results for the Poisson experiment with boundary feature: (a) relative QoI errors and estimates, (b) relative primal and dual defeaturing errors and estimates, (c) QoI values, (d) effectivity indices.}
    \label{fig:poisson:numerical results}
\end{figure}

\subsubsection{A feature on the boundary}
In this first experiment, we consider a feature on the boundary, for which we must yet prescribe a Dirichlet boundary condition. We impose the condition
\begin{align*}
g(\theta) = \sin(\theta), & & \theta = \arctan\left(\frac{x_2 - \frac{1}{2}}{x_1}\right), & & \bx = [x_1, x_2]^\top \in \defbd,
\end{align*}
which is compatible with the homogeneous Dirichlet boundary conditions on the neighboring boundaries. This boundary condition will introduce a strong singularity in the solution $u$ in the exact domain $\domain$.
In this example, the area of influence $S$ is given by
\begin{align*}
S = \left\{\bx \in \R^2 | |x_1| \leq \tfrac{1}{4}, |x_2| \leq \tfrac{1}{4}\right\}.
\end{align*}

\Cref{fig:poisson:numerical results} plots the results for the QoI and the different defeaturing errors against the feature size $|\defbd| \in [10^{-3}, \tfrac{1}{4}]$. More precisely, \cref{fig:poisson:numerical results:qoi values,fig:poisson:numerical results:qoi error} show that the estimate for the QoI error is reliable and that the relative error in the QoI vanishes rapidly in this experiment for $|\defbd| \to 0$. In contrast, while \cref{fig:poisson:numerical results:primal dual error} shows that the estimates in the primal and dual variables are reliable, the relative defeaturing error in the primal variable remains approximately constant at 100\%. The latter is due to the singularity introduced by our specific choice of boundary conditions.

In \cref{fig:poisson:numerical results:eff}, the effectivity indices for the primal and dual variables, as well as the corrected QoI, are plotted. They are indeed asymptotically constant in the limit $|\defbd| \to 0$. The slight drop at the lower end can be attributed to the numerical error, as resolving such small features is challenging and requires strong local mesh refinement, especially for the non-smooth dual problem. Furthermore, the effectivity index of the QoI estimate is indeed the product of the effectivity index of the primal and dual variables.

This experiment demonstrates that in general the energy-norm error is a poor proxy for the error in the QoI, underscoring the need for a goal-oriented estimate.

\subsubsection{An internal feature}
In this experiment, we also impose homogeneous Dirichlet boundary conditions on the feature boundary $\defbd$, such that no singularity is introduced at the feature boundary, in contrast to the first example.
The area of influence $S$ is given by
\begin{align*}
S = \{\bx \in \R^2 \mid |x_1| \leq \tfrac{1}{4}, -0.4 \leq x_2 \leq - 0.15\}.
\end{align*}

For an inclusion subject to Dirichlet boundary conditions in Poisson problems, it is well-known from singular perturbation theory that the convergence of the defeatured solution $u_0$ to $u$ is extremely slow if the source term is extended naively into the inclusion\cite{mazya_asymptotic_2000}, i.e. we have $\energynorm{u - (u_0)_{|\domain}}{\domain} \simeq \mathcal{O}(|\log |\defbd||^{-1/2})$ for $\domain \subset \R^2$. Naturally, the same is true for the dual solutions $z$ and $z_0$. Hence, we expect both substantial discrepancies in the energy norm of the primal and dual variables as well as the difference between the exact QoI $L(u)$ and the corrected defeatured QoI $L_0(u_0) + R_L^\defbd(u_0, z_0)$ in this case.

However, singular perturbation theory also provides first-order approximations to $u$ and $z$ as follows: Let $\hat{G}(x) = \frac{1}{2\pi}\log(|x - m_\feat|)$ denote the fundamental solution of the Laplacian centered at the feature's barycenter $m_\feat$ and consider $G(x) = \hat{G}(x) + g(x)$, where $g$ is the solution to
\begin{align}
\label{eq:num:green}
    \begin{cases}
        -\Delta g = 0,  \text{ in } \defdomain,\\
        g(x) = -\hat{G}(x)  \text{ on } \dirrmbd, \\
        \partial_n g(x) = -\partial_n\hat{G}(x)  \text{ on } \neumrmbd.
    \end{cases}
\end{align}
If the feature $\feat$ is a disk centered at $m_\feat$, the so-called \emph{gauge function} is given by
\begin{align*}
    \mu(\defbd) \coloneqq \frac{2\pi}{\log(\diam\defbd/2) - 2\pi \avg{g}{\defbd}},
\end{align*}
while for more general shapes, $\mu(\defbd)$ additionally depends on the logarithmic capacity of the feature.\cite{mazya_asymptotic_2000,ransford_computation_2011}
Then, the first-order approximations are given by
\begin{align*}
    u_1(x) &\coloneqq u_0(x) + \mu(\defbd) \avg{\bderr}{\defbd} G(x),
    \\
    z_1(x) &\coloneqq z_0(x) - \mu(\defbd) \avg{z_0}{\defbd} G(x),
\end{align*}
such that $\energynorm{u - u_1}{\domain} \simeq \energynorm{z - z_1}{\domain} \simeq \mathcal{O}(|\defbd|)$ for $|\defbd| \to 0$\cite{mazya_asymptotic_2000}.

\begin{figure}
    \centering
    \begin{subfigure}[T]{\linewidth}
    \centering
    \includegraphics[width=\linewidth]{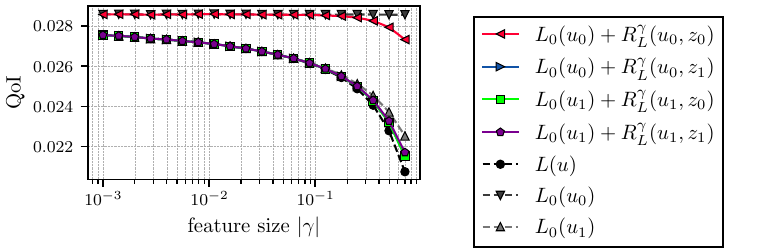}
    \caption{}
    \label{fig:poisson:green:qoi values}
    \end{subfigure}
    \begin{subfigure}[T]{\linewidth}
    \centering
    \includegraphics[width=\linewidth]{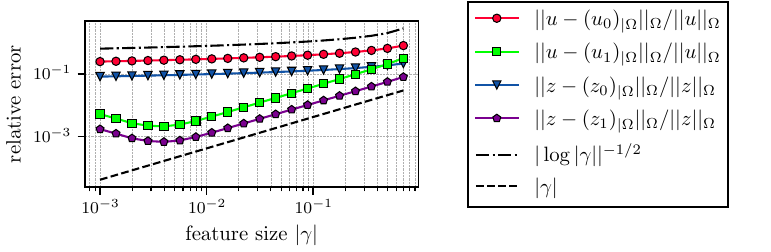}
    \caption{}
    \label{fig:poisson:green:primal dual errors}
    \end{subfigure}
    \caption{Results for the Poisson experiment with internal feature: (a) QoI values and (b) relative defeaturing errors for different approximations of the primal and dual solutions. In (a), the blue line for $L_0(u_0) + R_L^\defbd(u_0, z_1)$ is hidden. In addition, in (b), the convergence rates $\mathcal{O}(|\log|\defbd||^{-1/2})$ and $\mathcal{O}(|\defbd|)$ are plotted for the zeroth- and first-order approximations, respectively.}
    \label{fig:poisson:green:numerical results}
\end{figure}
In \cref{fig:poisson:green:qoi values}, the values of the QoI applied to the exact solution $u$, the defeatured solution $u_0$ and the corrected defeatured solution $u_1$ are plotted for different feature sizes $|\defbd| \in [10^{-3}, \tfrac{1}{4}]$. We observe that $L_0(u_0)$ remains significantly distant from $L(u)$ across all feature sizes, whereas $L_0(u_1)$ provides a substantially better approximation.

Furthermore, \cref{fig:poisson:green:qoi values} also shows the corrected QoIs using the corrector term $R_L^\defbd$ applied to different combinations of $u_0, u_1, z_0$ and $z_1$. We observe that when only the zeroth-order approximations $u_0$ and $z_0$ are used to reconstruct $L(u)$, the corrector term has no significant impact. This is due to the extremely slow convergence of the zeroth-order approximations in the limit $|\defbd| \to 0$. Indeed, in \cref{fig:poisson:green:primal dual errors}, we see that $u_0$ and $z_0$ deviate more than 10\% from their respective exact counterparts in the energy norm, even for the smallest feature sizes. Hence, they do not contain enough information for the true QoI to be reconstructed.
In contrast, using the first-order approximation in either the primal or dual variable is enough to accurately reconstruct the exact QoI value $L(u)$ using the corrector term $R_L^\defbd$. This is reflected by the much faster convergence rate in the limit $|\defbd| \to 0$ for the first-order approximations in \cref{fig:poisson:green:primal dual errors}.

Clearly, the numerical approximation of the first-order approximations becomes increasingly challenging as the feature size approaches zero due to the singularity of the Green's function $G$, which manifests itself in the increase of their relative errors for small feature sizes in \cref{fig:poisson:green:primal dual errors}.

In summary, the goal-oriented estimator is reliable regardless of the energy-norm errors in the primal and dual variables. However, for Dirichlet features, the correction term $R_L^\defbd(u_0, z_0)$ provides additional information to $L_0(u_0)$ only if either $u_0$ or $z_0$ is reasonably close to $u$ or $z$ in the energy norm, respectively.

\subsection{Linear elasticity}

\tdplotsetmaincoords{120}{-15}
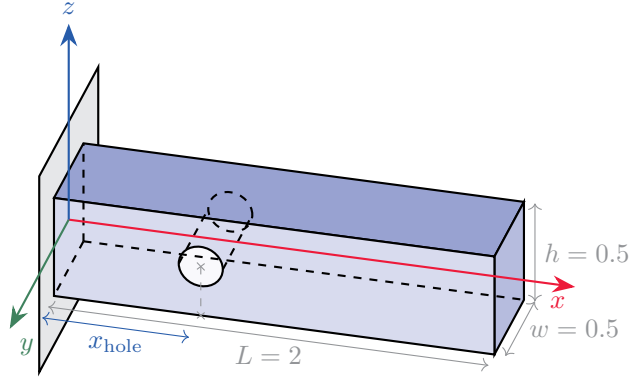
\begin{figure}
    \centering
    \begin{tikzpicture}[
    tdplot_main_coords,
    scale=3,
    font=\sffamily,
    dimline/.style={-Latex, gray}
]
    \def\L{2}    
    \def\W{0.5}  
    \def\H{0.5}  

    \coordinate (O)   at (0,-\W / 2,-\H / 2);
    \coordinate (A)   at (\L,-\W / 2,-\H / 2);
    \coordinate (B)   at (\L,\W / 2,-\H / 2);
    \coordinate (C)   at (0,\W / 2,-\H / 2);
    \coordinate (O_t) at (0,-\W / 2,\H / 2);
    \coordinate (A_t) at (\L,-\W / 2,\H / 2);
    \coordinate (B_t) at (\L,\W / 2,\H / 2);
    \coordinate (C_t) at (0,\W / 2,\H / 2);

    Hole is centered at (L/2, 0, 0) and runs along the y-axis
    \def\R{0.1} 
    \def\holeX{\L/3}
    \def\holeZ{0}

    \draw[
        thick,
        fill=gray!20,
        pattern color=gray!60
    ]
        (0, -0.25 - \W / 2, -0.25 - \H / 2) -- (0, \W / 2+0.25, -0.25 - \H / 2) -- (0, \W / 2+0.25, \H / 2+0.25) -- (0, -0.25 - \W / 2, \H / 2+0.25) -- cycle;


    \filldraw[draw=black, thick, fill=blue1!45] (O_t) -- (C_t) -- (B_t) -- (A_t) -- cycle; 
    \filldraw[draw=black, thick, fill=blue1!30] (A) -- (B) -- (B_t) -- (A_t) -- cycle;   
    
    \fill[blue1!15, even odd rule] 
        (C) -- (B) -- (B_t) -- (C_t) -- cycle 
        [canvas is xz plane at y=\W/2] (\holeX, \holeZ) circle (\R); 

    \draw[thick] (C) -- (B) -- (B_t) -- (C_t) -- cycle;
    \filldraw[thick, canvas is xz plane at y=\W/2, fill=blue1!2] (\holeX, \holeZ) circle (\R);

    \draw[thick, dashed] (O) -- (A);
    \draw[thick, dashed] (O) -- (C);
    \draw[thick, dashed] (O) -- (O_t);
    \draw[thick, dashed, canvas is xz plane at y=-\W/2] (\holeX, \holeZ) circle (\R);

    \draw[thick, dashed] (\holeX - \R, \W / 2, \holeZ) -- (\holeX - \R, -\W / 2, \holeZ);
    \draw[thick, dashed] (\holeX + \R, \W / 2, \holeZ) -- (\holeX + \R, -\W / 2, \holeZ);

    \draw[dimline, <->] (0, \W / 2 + 0.1, - \H / 2) -- (\L, \W / 2 + 0.1, - \H / 2) node[midway, below] {$L = 2$};
    
    \draw[dimline, <->] (\L+0.05, -\W / 2, -\H / 2) -- (\L+0.05, -\W / 2, \H / 2) node[midway, right] {$h = 0.5$};

    \draw[dimline, <->] (\L + 0.05, -\W / 2, -\H / 2) -- (\L + 0.05, \W / 2, -\H / 2) node[midway, right] {$w = 0.5$};

    \def\crosssize{0.0175} 
    \draw[gray, dashed] (\holeX, \W/2, \holeZ) -- (\holeX, \W/2, -\H/2);
    \draw[gray] (\holeX - \crosssize, \W/2, \holeZ - \crosssize) -- (\holeX + \crosssize, \W/2, \holeZ + \crosssize);
    \draw[gray] (\holeX + \crosssize, \W/2, \holeZ - \crosssize) -- (\holeX - \crosssize, \W/2, \holeZ + \crosssize);
    \draw[gray] (\holeX - \crosssize, \W/2, -\H / 2 - \crosssize) -- (\holeX + \crosssize, \W/2, -\H / 2 + \crosssize);
    \draw[gray] (\holeX + \crosssize, \W/2, -\H / 2 - \crosssize) -- (\holeX - \crosssize, \W/2, -\H / 2 + \crosssize);

    \draw[-Latex, blue1, <->] (0, \W / 2 + 0.2, - \H / 2) -- (\holeX, \W / 2 + 0.2, - \H / 2) node[midway, below] {$x_{\mathrm{hole}}$};
    
    \draw[-{Stealth[length=3mm]}, thick, red1] (0,0,0) -- (2.3,0,0) node[anchor=north east]{$x$};
    \draw[-{Stealth[length=3mm]}, thick, green1!60!black] (0,0,0) -- (0, \W / 2 + 0.75,0) node[anchor=north west]{$y$};
    \draw[-{Stealth[length=3mm]}, thick, blue1] (0,0,0) -- (0,0,\H / 2 + 0.75) node[anchor=south]{$z$};

\end{tikzpicture}
    \caption{Illustration of the cantilever beam with a cylindrical hole used in the linear elasticity experiment. The radius of the hole is 0.05 and its $x$-coordinate $x_{\mathrm{hole}}$ is varied between 0.25 and 1.75.}
    \label{fig:elasticity:sketch}
\end{figure}

In this section, we consider the linear elasticity problem for the cantilever beam with a cylindrical hole illustrated in \cref{fig:elasticity:sketch}. The hole is a negative feature on the boundary in the defeaturing context. The Lamé parameters are given by $\mu = 1$ and $\lambda = 1.25$. The beam is clamped at $x = 0$ and subject to gravity. The other boundaries, including the boundary of the hole are kept free. The QoI in this experiment is the average $z$-displacement at the opposite end of the beam at $x = L$.

\begin{figure}
    \centering
    \begin{subfigure}[T]{0.45\textwidth}
        \includegraphics{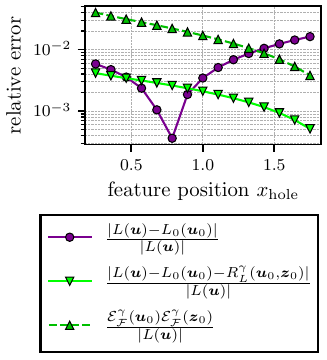}
    \caption{}
    \label{fig:elasticity:numerical results:qoi errors}
    \end{subfigure}
    \hfill
    \begin{subfigure}[T]{0.45\textwidth}
        \includegraphics{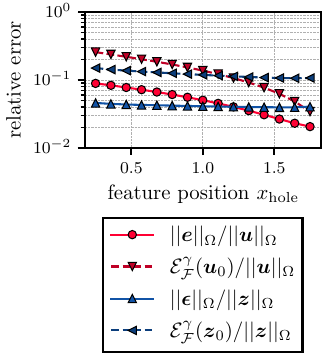}
    \caption{}
    \label{fig:elasticity:numerical results:primal dual errors}
    \end{subfigure}
    \begin{subfigure}[T]{0.45\textwidth}
        \includegraphics{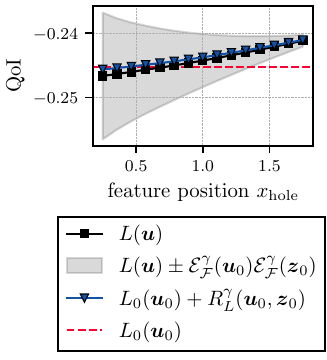}
    \caption{}
    \label{fig:elasticity:numerical results:qoi values}
    \end{subfigure}
    \hfill
    \begin{subfigure}[T]{0.45\textwidth}
        \includegraphics{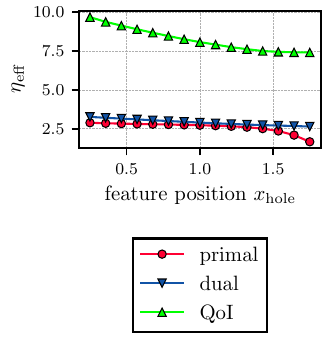}
    \caption{}
    \label{fig:elasticity:numerical results:eff}
    \end{subfigure}
    \caption{Results for the linear elasticity experiment for varying feature positions: (a) relative QoI errors and estimates, (b) relative primal and dual defeaturing errors and estimates, (c) QoI values, (d) effectivity indices. In (a), we write $\boldsymbol{\epsilon} \coloneqq \boldsymbol{z}  - (\boldsymbol{z_0})_{|\domain}$ for the dual defeaturing error.}
    \label{fig:elasticity:numerical results}
\end{figure}

\Cref{fig:elasticity:numerical results} shows the defeaturing results for varying $x$-coordinates of the cylindrical hole $x_{\mathrm{hole}} \in [0.25, 1.75]$. In \cref{fig:elasticity:numerical results:qoi values}, we observe that there is a significant dependence of the exact QoI $L(u)$, which is not captured by the defeatured QoI $L_0(u_0)$. In contrast, the corrected QoI $L_0(u_0) + R_L^\defbd(u_0, z_0)$ closely follows the exact one.

We also note that the estimate is more conservative when the hole is closer to the clamped boundary, which is explained by stronger stress concentrations around the feature in these cases. Indeed, the defeaturing error in the energy-norm of the primal variable plotted in \cref{fig:elasticity:numerical results:primal dual errors} increases when the feature approaches the clamped boundary. Similarly, the error in the corrected QoI increases in \cref{fig:elasticity:numerical results:qoi errors} when the feature approaches the clamped boundary as $u_0$ then provides less and less information through the correction term $R_L^\defbd(u_0, z_0)$.

In contrast, at both extremes of the parameter interval for the feature position, the defeatured QoI deviates significantly from the actual value. At the same time, there is an equilibrium position $x_{\mathrm{hole}} \approx 0.25$, where the feature does not matter at all. For $x_{\mathrm{hole}}$ close to 1.75, the estimate even becomes unreliable for the uncorrected QoI $L_0(u_0)$, highlighting the importance of the corrector term.

In addition, we point out that the effectivity index of the goal-oriented estimate is approximately ten for all feature positions, which is coherent with the effectivity indices of the primal and dual variables; see \cref{fig:elasticity:numerical results:eff}.

Finally, we observe that in contrast to the Dirichlet features in the Poisson experiments, the corrector term $R_L^\defbd(u_0, z_0)$ adds significant additional information to $L_0(u_0)$ that allows us to estimate the impact of the feature position and shape on $L$ without recomputing the exact solution $u$.

\subsection{Stokes flow}

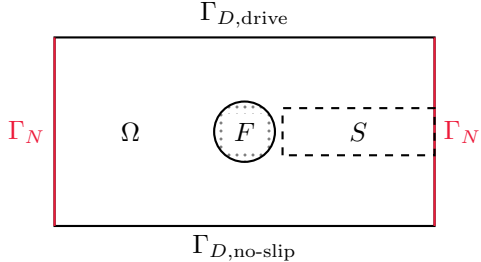
\begin{figure}
    \centering
    \begin{tikzpicture}
    \def\length{5}
    \def\width{2.5}
    \def\featRadius{0.4}
    \def\QoIOffset{0.5}
    
    \draw[thick] (0, 0) rectangle (\length, \width);

    \node at (\length / 5, \width / 2) {$\domain$};

    \filldraw[thick, pattern=dots, pattern color=gray]  (\length / 2, \width / 2) circle [radius=\featRadius];

    \node[fill=white] at (\length / 2, \width / 2) {$\feat$};

    \node[anchor=south] at (\length / 2, \width) {$\rmbd_{D, \text{drive}}$};
    \node[anchor=north] at (\length / 2, 0) {$\rmbd_{D, \text{no-slip}}$};

    \node[anchor=east, red1] at (0, \width / 2) {$\rmbd_{N}$};
    \node[anchor=west, red1] at (\length, \width / 2) {$\rmbd_{N}$};

    \draw[thick, red1] (0, 0) -- (0,\width);
    \draw[thick, red1] (\length, 0) -- (\length,\width);

    \coordinate (q1) at (\length / 2 + \QoIOffset, \width / 2 - \width / 8);
    \coordinate (q2) at (\length, \width / 2 + \width / 8);
    \draw[thick, dashed] (q1) rectangle (q2);
    \node at ($(q1)!0.5!(q2)$) {$S$};

\end{tikzpicture}
    \caption{Illustration of the exact domain $\domain$ with a disk feature $\feat$ at the center for the Stokes experiment. The bottom boundary $\rmbd_{D, \text{no-slip}}$ and the feature boundary are subject to no-slip conditions, while top boundary $\rmbd_{D, \text{drive}}$ is driven by a constant velocity. The free Neumann boundary $\rmbd_N$ are marked in red. The dashed rectangle $S$ marks the integration region for the QoI.}
    \label{fig:stokes:sketch}
\end{figure}

In this last experiment, we consider Stokes' equations \cref{eq:stokes:exact problem} in the domain $\domain = \defdomain \setminus \overline{\feat}$ illustrated in \cref{fig:stokes:sketch}, where the defeatured domain is the rectangle $\defdomain = [-\tfrac{L}{2}, \tfrac{L}{2}] \times [-\tfrac{W}{2}, \tfrac{W}{2}]$ with $L = 1$ and $W = \tfrac{1}{2}$. The disk feature $\feat$ is centered at the origin and its radius will be varied in the interval $[\tfrac{10^{-3}}{2 \pi}, \tfrac{1}{4 \pi}]$. 

We prescribe vanishing source terms $\bff \equiv \boldsymbol{0}$ and $f_c \equiv 0$.
The bottom and feature boundaries are subject to no-slip boundary conditions, while the lid is driven by a constant velocity $U_x = 1$ in $x$-direction. The left and right boundaries $\rmbd_N$ are kept free, i.e. $ \stress(\bu) \bn - p \bn = \boldsymbol{0}$. This setup leads to a linear flow profile along the $y$-axis in the defeatured domain, which is perturbed by the circular hole in the exact domain.

\begin{figure}
    \centering
    \begin{subfigure}[T]{0.45\textwidth}
        \includegraphics{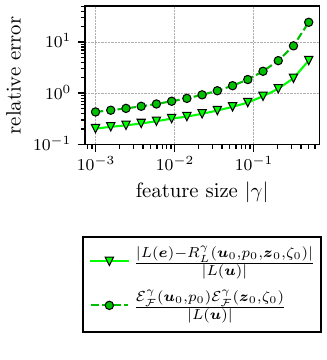}
    \caption{}
    \label{fig:stokes:numerical results:qoi errors}
    \end{subfigure}
    \hfill
    \begin{subfigure}[T]{0.45\textwidth}
        \includegraphics{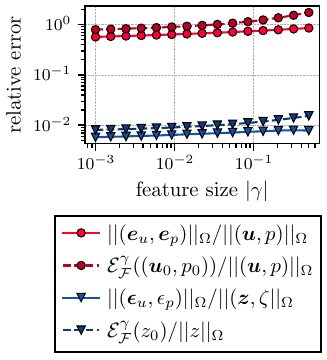}
    \caption{}
    \label{fig:stokes:numerical results:primal dual errors}
    \end{subfigure}
    \begin{subfigure}[T]{0.45\textwidth}
        \includegraphics{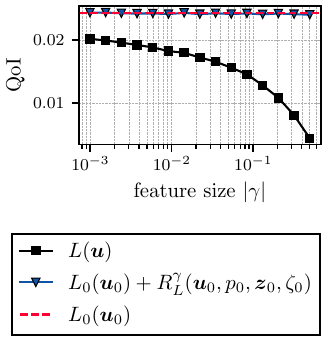}
    \caption{}
    \label{fig:stokes:numerical results:qoi values}
    \end{subfigure}
    \hfill
    \begin{subfigure}[T]{0.45\textwidth}
        \includegraphics{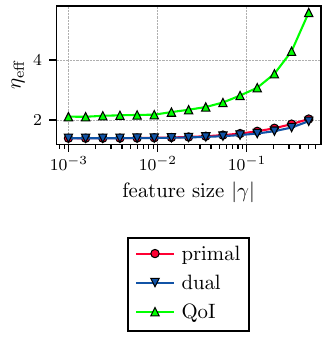}
    \caption{}
    \label{fig:stokes:numerical results:eff}
    \end{subfigure}

    \caption{Results for the Stokes' flow experiment for varying feature positions: (a) relative QoI errors and estimates, (b) relative primal and dual defeaturing errors and estimates, (c) QoI values, (d) effectivity indices. In (b), we write $\boldsymbol{\epsilon}_u \coloneqq \boldsymbol{z}  - (\boldsymbol{z_0})_{|\domain}$ and $\epsilon_p \coloneqq \zeta - (\zeta_0)_{|\domain}$.}
    \label{fig:stokes:numerical results}
\end{figure}
The QoI for this experiment is the average $x$-velocity in the region $S = [\tfrac{L}{10}, L] \times [-\tfrac{W}{8}, \tfrac{W}{8}]$:
\begin{align*}
    L(\bu) = \int_S u_1 \dd{x}.
\end{align*}

\Cref{fig:stokes:numerical results:qoi values} displays the QoI values for various feature sizes alongside the defeatured and corrected approximations. We note that, similar to the Poisson example with the interior feature (cf. \cref{fig:poisson:green:qoi values}), the corrector term does not provide additional information for the QoI reconstruction.

Nevertheless, \cref{fig:stokes:numerical results:qoi errors,fig:stokes:numerical results:primal dual errors} show that both the goal-oriented estimator and the energy-norm estimator for Dirichlet features in Stokes problems are reliable. Moreover, we note that the error in the QoI does not decrease rapidly in this example. Indeed, as shown in \cref{fig:stokes:numerical results:qoi errors}, the relative error remains above 10\% even for the smallest feature size.
In addition, we observe in \cref{fig:stokes:numerical results:eff} that the effectivity index is asymptotically independent of the feature size as predicted by the theory.

Finally, we conclude from this example that the defeaturing error in the QoI does not necessarily vanish rapidly as the feature size approaches zero. Hence, geometry simplifications, which are based solely on the size of the feature, are in general inadequate for guaranteeing a prescribed error threshold in the QoI.

\section{Conclusion}
\label{sec:conclusion}
This work establishes a mathematically certified framework for goal-oriented a posteriori error control in geometric defeaturing. By combining the dual-weighted residual (DWR) method\cite{becker_optimal_2001,prudhomme_goal-oriented_1999} with rigorous energy-norm estimates, we derive certified error bounds for linear quantities of interest (QoIs). Our contributions are threefold: we established new reliable energy-norm estimators for Dirichlet features in linear elasticity and Stokes flow; we formulated unified estimators for geometries with mixed feature types; and we derived a certified goal-oriented framework applicable to the Poisson, linear elasticity, and Stokes problems that inherits this generality.

The current framework is limited to negative features. Based on existing estimators for positive Neumann features\cite{chanon_adaptive_2022}, the corresponding goal-oriented estimates could be derived similarly. Positive Dirichlet features, however, first require the derivation of energy-norm estimates. Additionally, the current framework only provides an upper bound on the error. While the energy-norm estimates for the Neumann features are known to be efficient\cite{buffa_analysis-aware_2022,chanon_adaptive_2022}, a suitable lower bound remains to be established for Dirichlet features. Hence, future work includes proving the efficiency of the Dirichlet estimators and developing a suitable framework for positive Dirichlet defeaturing.

The goal-oriented error estimation framework presented in this paper not only establishes a reliable error estimate but also offers a method for reconstructing the exact quantity of interest. This reconstruction is based solely on the defeatured primal and dual solutions, along with the feature boundary information. This approach could open new avenues for shape optimization.

\section*{Acknowledgments}
This research was supported by the Swiss National Science Foundation via project MINT n. 200021\_215099, PDE tools for analysis-aware geometry processing in simulation science.

\appendix
\section{Some results in Sobolev trace spaces}
\label{sec:results on trace spaces}
In this section, we collect definitions and results in Sobolev trace spaces that are useful in the context of defeaturing error estimation. For details and proofs, we refer to Chanon\cite{chanon_adaptive_2022}, Weder and Buffa\cite{weder_analysis-aware_2025} and Weder\cite{weder_extension_2025}. Moreover, these definitions and results will be used in the proofs in \cref{sec:linear elasticity:reliability,sec:stokes:reliability}.

The following subspace of the trace space $\htrace{\genbd}$ is essential for our analysis:
\begin{align}
\label{eq:definition htracedbz}
    \htracedbz{\genbd} \coloneqq \left\{\mu \in \Ltwo{\genbd} : \mu^\star \in \htrace{\partial \domain}\right\},
\end{align}
where we write $\mu^\star$ for the extension of $\mu$ by 0 on $\partial \domain$. The corresponding norm and semi-norm are, respectively, defined by
\begin{align*}
    \htracedbznorm{\mu}{\genbd}^2 &\coloneqq \norm{\mu}{0}{\genbd}^2 + \htracedbzseminorm{\mu}{\genbd}^2,
    \\
    \htracedbzseminorm{\mu}{\genbd}^2 &\coloneqq \htraceseminorm{\mu}{\genbd}^2 + \int_\genbd \int_{\partial \domain \setminus \genbd} \frac{|\mu(y)|^2}{|x - y|^{n}} \dd{s}(x) \dd{s}(y).
\end{align*}
In particular, we have
\begin{align*}
    \htracedbznorm{\mu}{\genbd} = \htracenorm{\mu^\star}{\partial \domain} & &\text{ and } & & \htracedbzseminorm{\mu}{\genbd} = \htraceseminorm{\mu^\star}{\partial\domain}.
\end{align*}
These definitions generalize readily to the vector-valued versions of these spaces.

For subsets of the boundary $\genbd \subset \partial \domain$ with $|\genbd| > 0$, we can naturally extend the definition of the Neumann trace operator in \cref{eq:defnition of neumann operator}  to $\partialneumop \genbd: \hdiv\domain \to \htracedbzdual{\genbd}$ by setting
\begin{align}
\label{eq:definition of partial neumann operator}
    \langle \partialneumop\genbd(v), \mu\rangle \coloneqq \langle \neumop(v), \mu^\star \rangle, & & v \in \hdiv\domain,\, \mu \in \htracedbz{\genbd}.
\end{align}

Finally, we point out that one can equip the trace space $\htrace{\partial \domain}$ with the equivalent natural trace norm given by
\begin{align}
\label{eq:preliminaries:definition natural trace norm}
\htraceinfnorm{\mu}{\partial \domain} \coloneqq \inf_{u \in \honebd{\mu}{\partial \domain}{\domain}}\honenorm{u}{\domain} & & \forall \mu \in \htrace{\partial \domain}.
\end{align}
We refer to Hsiao and Wendland\cite{hsiao_boundary_2021} for a detailed discussion of equivalent trace space norms.

The following Poincaré and interpolation-type inequalities in trace spaces are crucial for the derivation of the Dirichlet defeaturing estimators. Although stated for scalar-valued function spaces, these inequalities readily extend to their vector-valued counterparts.

\begin{lemma}[Lemma 2.3.6. in Chanon\cite{chanon_adaptive_2022}]
\label{lemma:poincare I}
Assume that $\genbd \subset \partial \domain$  for some Lipschitz domain $\domain \subset \R^n$. Assume that $\genbd$ is isotropic according to \cref{def:isotropic subset} and connected, and $\partial \genbd \neq \emptyset$. Then, for all $\mu \in \htracedbz{\genbd}$,
\begin{align*}
    \Ltwonorm{\mu}{\genbd} \lesssim |\genbd|^{\frac{1}{2(n-1)}} \htraceseminorm{\mu^\star}{\partial \domain} \leq |\genbd|^{\frac{1}{2(n-1)}} \htracedbznorm{\mu}{\genbd}.
\end{align*}
\end{lemma}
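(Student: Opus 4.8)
The plan is as follows. By the identities $\htracedbznorm{\mu}{\genbd} = \htracenorm{\mu^\star}{\partial\domain}$ and $\htracedbzseminorm{\mu}{\genbd} = \htraceseminorm{\mu^\star}{\partial\domain}$ recorded above, one has $\htracedbznorm{\mu}{\genbd}^2 = \htracenorm{\mu^\star}{\partial\domain}^2 \ge \htraceseminorm{\mu^\star}{\partial\domain}^2$, so the second inequality in the statement is immediate and the task reduces to proving $\Ltwonorm{\mu}{\genbd} = \Ltwonorm{\mu^\star}{\partial\domain} \lesssim |\genbd|^{\frac{1}{2(n-1)}}\htraceseminorm{\mu^\star}{\partial\domain}$ for all $\mu\in\htracedbz{\genbd}$. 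Since $\genbd$ is connected and isotropic, \cref{def:isotropic subset} gives $\diam{\genbd}^{n-1}\simeq|\genbd|$; writing $R := \diam{\genbd}$, it is therefore enough to prove $\Ltwonorm{\mu}{\genbd}^2 \lesssim R\,\htraceseminorm{\mu^\star}{\partial\domain}^2$.

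The heart of the argument will be a fractional Hardy inequality for the space $\htracedbz{\genbd}$, namely $\int_{\genbd}|\mu(x)|^2\,\dist(x,\partial\genbd)^{-1}\,\dd{s}(x) \lesssim \htraceseminorm{\mu^\star}{\partial\domain}^2$, the $H^{1/2}$-analogue of the classical Hardy inequality in $H^1_0$. To obtain it, I would discard from the Gagliardo double integral defining $\htraceseminorm{\mu^\star}{\partial\domain}^2$ the contribution with both points in $\genbd$; since $\mu^\star$ vanishes on $\partial\domain\setminus\genbd$ — a set with positive Hausdorff measure in every neighbourhood of $\partial\genbd$, because $\partial\genbd\neq\emptyset$ — this leaves
\[
\htraceseminorm{\mu^\star}{\partial\domain}^2 \;\ge\; 2\int_{\genbd}|\mu(x)|^2\,\Phi(x)\,\dd{s}(x), \qquad \Phi(x) := \int_{\partial\domain\setminus\genbd}\frac{\dd{s}(y)}{|x-y|^{n}} .
\]
It then remains to establish the pointwise lower bound $\Phi(x) \gtrsim \dist(x,\partial\genbd)^{-1}$. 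I would do this by taking a nearest point $p\in\partial\genbd$ to $x$ and using the Lipschitz regularity of $\partial\domain$ together with the isotropy of $\genbd$ to locate, within distance $\lesssim\dist(x,\partial\genbd)$ of $x$, a piece of $\partial\domain\setminus\genbd$ of $(n-1)$-dimensional measure $\gtrsim\dist(x,\partial\genbd)^{n-1}$ — a measure–density (interior-corkscrew-type) property of the complement near $\partial\genbd$ — and integrating $|x-y|^{-n}$ over that piece.

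The conclusion is then short. For any $x\in\genbd$ and $p\in\partial\genbd$, $\dist(x,\partial\genbd)\le\rho(x,p)\le\diam{\genbd}=R$, hence $\dist(x,\partial\genbd)^{-1}\ge R^{-1}$, and the Hardy inequality gives $R^{-1}\Ltwonorm{\mu}{\genbd}^2 \le \int_{\genbd}|\mu(x)|^2\,\dist(x,\partial\genbd)^{-1}\,\dd{s}(x) \lesssim \htraceseminorm{\mu^\star}{\partial\domain}^2$, that is $\Ltwonorm{\mu}{\genbd}^2 \lesssim R\,\htraceseminorm{\mu^\star}{\partial\domain}^2 \simeq |\genbd|^{\frac{1}{n-1}}\htraceseminorm{\mu^\star}{\partial\domain}^2$; a square root finishes the proof. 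Equivalently, one could rescale $\genbd$ by $R^{-1}$, reduce to a dimensionless Poincaré-type estimate on a unit-diameter isotropic set, and track the homogeneities (on an $(n-1)$-manifold the squared $L^2$-norm carries one more power of length than the squared $H^{1/2}$-seminorm). The hard part, in either route, will be the density/corkscrew estimate for $\partial\domain\setminus\genbd$ — equivalently, control of the Hardy constant — \emph{uniformly} over the whole admissible family of isotropic features $\genbd$, rather than for one fixed domain; this is exactly where the hypotheses that $\genbd$ be isotropic, connected with $\partial\genbd\neq\emptyset$, and that $\domain$ be Lipschitz, are used.
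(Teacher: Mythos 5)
The paper does not actually prove \cref{lemma:poincare I}: it is imported verbatim as Lemma~2.3.6 of Chanon\cite{chanon_adaptive_2022}, where the argument is the standard one for these defeaturing estimates --- rescale $\genbd$ to a reference configuration of unit diameter, apply a Poincar\'e-type inequality there for functions whose zero-extension lies in $\htrace{\partial\domain}$ (obtained by a compactness/contradiction argument: vanishing seminorm forces $\mu^\star$ to be constant, hence zero since it vanishes on a set of positive measure), and then track the scaling, the isotropy and connectedness of $\genbd$ being exactly what turns the scaling factor into $|\genbd|^{\frac{1}{2(n-1)}}$. Your route is genuinely different: you deduce the estimate from a fractional Hardy inequality, obtained by discarding the $\genbd\times\genbd$ block of the Gagliardo integral and keeping only the cross term $\int_{\genbd}|\mu(x)|^2\Phi(x)\dd{s}(x)$ with $\Phi(x)=\int_{\partial\domain\setminus\genbd}|x-y|^{-n}\dd{s}(y)$ (which is precisely the extra term in $\htracedbzseminorm{\cdot}{\genbd}$ as defined in \cref{sec:results on trace spaces}), followed by the crude bound $\dist(x,\partial\genbd)^{-1}\ge\diam{\genbd}^{-1}$ and the isotropy relation $\diam{\genbd}^{n-1}\simeq|\genbd|$. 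The reduction of the second inequality to the norm identities, the bookkeeping of homogeneities, and the final step are all correct, and your approach buys a quantitative, compactness-free proof together with a stronger weighted inequality.

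The one step you assert rather than prove --- and correctly identify as the crux --- is the pointwise bound $\Phi(x)\gtrsim\dist(x,\partial\genbd)^{-1}$, which requires an interior measure-density (corkscrew) property of $\partial\domain\setminus\genbd$ near $\partial\genbd$: a piece of the complement of surface measure $\gtrsim\dist(x,\partial\genbd)^{n-1}$ within distance $\lesssim\dist(x,\partial\genbd)$ of $x$. Be aware that this does \emph{not} follow from the stated hypotheses alone ($\genbd$ isotropic, connected, $\partial\genbd\neq\emptyset$ says nothing about the complement, which could a priori be a thin sliver near $\partial\genbd$, degrading the Hardy constant); it holds because in the defeaturing setting $\genbd$ is the defeatured boundary of a Lipschitz feature inside the Lipschitz manifold $\partial\domain$, so that $\partial\genbd$ is an $(n-2)$-dimensional Lipschitz interface with Ahlfors-regular complement. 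The reference proof has the mirror-image issue (uniformity of the compactness constant over the admissible shape class), and both are absorbed into the paper's convention that $\lesssim$ hides shape-dependent but size-independent constants. So: correct strategy, different from the cited proof, with one standard but nontrivial geometric lemma left to be supplied.
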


\begin{lemma}[Lemma 2.3.8. in Chanon\cite{chanon_adaptive_2022}]
\label{lemma:poincare II}
    Assume that $\genbd$ is isotropic according to \cref{def:isotropic subset}. Then, for all $\mu \in \htrace{\genbd}$,
    \begin{align*}
        \Ltwonorm{\mu - \avg{\mu}{\genbd}}{\genbd} \lesssim |\genbd|^{\frac{1}{2(n-1)}} \htraceseminorm{\mu}{\genbd}.
    \end{align*}
\end{lemma}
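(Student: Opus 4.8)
The plan is to establish this scaled fractional Poincaré-Wirtinger inequality by the elementary averaging trick, and then to extract the sharp power of $|\genbd|$ from the isotropy hypothesis alone. I would first recall that, since $\genbd$ is $(n-1)$-dimensional with $s = \tfrac{1}{2}$, the relevant seminorm is $\htraceseminorm{\mu}{\genbd}^2 = \int_\genbd\int_\genbd |\mu(x)-\mu(y)|^2\,|x-y|^{-n}\,\dd{s}(x)\,\dd{s}(y)$, with kernel exponent $(n-1)+2\cdot\tfrac{1}{2} = n$, consistently with the kernel appearing in $\htracedbzseminorm{\cdot}{\genbd}$ in the appendix.

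First I would rewrite the centered function as an average of pairwise differences via $\mu(x)-\avg{\mu}{\genbd} = \frac{1}{|\genbd|}\int_\genbd(\mu(x)-\mu(y))\,\dd{s}(y)$, and then apply Jensen's inequality (equivalently Cauchy-Schwarz) with respect to the normalized measure $|\genbd|^{-1}\dd{s}$ to obtain the pointwise bound $|\mu(x)-\avg{\mu}{\genbd}|^2 \leq |\genbd|^{-1}\int_\genbd|\mu(x)-\mu(y)|^2\,\dd{s}(y)$. Integrating in $x$ turns the left-hand side into $\Ltwonorm{\mu-\avg{\mu}{\genbd}}{\genbd}^2$ and leaves, on the right, the unweighted double integral of $|\mu(x)-\mu(y)|^2$ over $\genbd\times\genbd$ divided by $|\genbd|$.

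The second step is to reintroduce the fractional kernel so as to recover the seminorm. For any $x,y\in\genbd\subset\hull{\genbd}$ the Euclidean distance is bounded by the geodesic distance, hence by the manifold diameter, so $|x-y|\leq\diam{\genbd}$; writing $|\mu(x)-\mu(y)|^2 = |x-y|^{n}\,|\mu(x)-\mu(y)|^2\,|x-y|^{-n}$ and bounding $|x-y|^{n}\leq\diam{\genbd}^{n}$ under the integral gives $\Ltwonorm{\mu-\avg{\mu}{\genbd}}{\genbd}^2 \leq |\genbd|^{-1}\diam{\genbd}^{n}\,\htraceseminorm{\mu}{\genbd}^2$. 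Finally I would invoke isotropy: with $d=n-1$, \cref{def:isotropic subset} provides $\diam{\genbd}^{n-1}\lesssim|\genbd|$, so that $|\genbd|^{-1}\diam{\genbd}^{n} = |\genbd|^{-1}\diam{\genbd}^{n-1}\,\diam{\genbd}\lesssim\diam{\genbd}\lesssim|\genbd|^{1/(n-1)}$; taking square roots closes the argument.

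The main point to watch, rather than a genuine obstacle, is twofold. First, replacing the singular kernel by its extremal value $\diam{\genbd}^{-n}$ is crude, so I would verify it still yields the sharp exponent $\tfrac{1}{2(n-1)}$ — it does, precisely because isotropy ties $\diam{\genbd}$ to $|\genbd|^{1/(n-1)}$. Second, unlike \cref{lemma:poincare I}, the present statement assumes no connectivity of $\genbd$, so I would rely only on the one-sided bound $\diam{\genbd}^{n-1}\lesssim|\genbd|$ that \cref{def:isotropic subset} grants for arbitrary isotropic sets, not on the two-sided equivalence $\diam{\genbd}^{n-1}\simeq|\genbd|$ that is available only for connected sets.
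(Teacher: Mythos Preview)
The paper does not supply its own proof of this lemma; it is quoted from Chanon\cite{chanon_adaptive_2022} and stated in \cref{sec:results on trace spaces} without argument. Your proposal is a correct and self-contained proof: the averaging identity together with Jensen's inequality yields $\Ltwonorm{\mu-\avg{\mu}{\genbd}}{\genbd}^2 \leq |\genbd|^{-1}\int_\genbd\int_\genbd|\mu(x)-\mu(y)|^2\,\dd{s}(x)\,\dd{s}(y)$, the factor $|x-y|^n\leq\diam{\genbd}^n$ recovers the $H^{1/2}$ kernel, and the isotropy bound $\diam{\genbd}^{n-1}\lesssim|\genbd|$ from \cref{def:isotropic subset} converts the prefactor $|\genbd|^{-1}\diam{\genbd}^n$ into $|\genbd|^{1/(n-1)}$, giving the stated exponent. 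Your observation that only the one-sided inequality $\diam{\genbd}^{n-1}\lesssim|\genbd|$ is needed, and hence that connectedness is not required, is accurate and matches the hypotheses of the lemma.
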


\begin{lemma}[Lemma A.4 in Weder and Buffa\cite{weder_analysis-aware_2025}]
\label{lemma:interpolation and poincare inequality}
    Assume that $\genbd$ is isotropic according to \cref{def:isotropic subset}. Then, for all $\mu \in \htrace{\genbd}$,
    \begin{align*}
        \htraceseminorm{\mu - \avg{\mu}{\genbd}}{\genbd} \lesssim \sqrt{\Ltwonorm{\mu - \avg{\mu}{\genbd}}{\genbd} \Ltwonorm{\nabla_t \mu}{\genbd}},
    \end{align*}
    where $\nabla_t \mu$ denotes the gradient in tangential direction. 
    Moreover, if $\mu \in \htracedbz{\genbd}$, we have
    \begin{align*}
        \htracedbzseminorm{\mu}{\genbd} \lesssim \sqrt{\Ltwonorm{\mu}{\genbd} \Ltwonorm{\nabla_t\mu}{\genbd}}.
    \end{align*}
\end{lemma}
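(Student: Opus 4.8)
The two displayed inequalities are Gagliardo–Nirenberg interpolation estimates in the fractional trace space, and I would prove them directly from the definition of the Gagliardo seminorm rather than invoking abstract interpolation theory, so as to keep every constant manifestly independent of the size of $\genbd$. For the first inequality, the key observation is that the seminorm annihilates constants: setting $v \coloneqq \mu - \avg{\mu}{\genbd}$ gives $\htraceseminorm{\mu - \avg{\mu}{\genbd}}{\genbd} = \htraceseminorm{v}{\genbd}$, $\nabla_t v = \nabla_t \mu$, and $\Ltwonorm{v}{\genbd} = \Ltwonorm{\mu - \avg{\mu}{\genbd}}{\genbd}$. It therefore suffices to establish $\htraceseminorm{v}{\genbd}^2 \lesssim \Ltwonorm{v}{\genbd}\,\Ltwonorm{\nabla_t v}{\genbd}$, and we may assume $\nabla_t\mu$ is square-integrable, since otherwise the right-hand side is infinite and the bound is vacuous.

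The plan is to split the Gagliardo double integral at a free scale $t > 0$ into a near-diagonal part over $\{|x-y| < t\}$ and a far part over $\{|x-y| \ge t\}$. For the near part I would use a geodesic fundamental-theorem-of-calculus bound $|v(x) - v(y)|^2 \lesssim |x-y|^2 \int_0^1 |\nabla_t v(\gamma_{xy}(\tau))|^2 \dd{\tau}$, where $\gamma_{xy}$ is a rectifiable path of length $\simeq |x-y|$ joining $x$ and $y$, together with the elementary identity $\int_{\{|z|<t\}} |z|^{2-n}\dd{z} \simeq t$ in a local chart of the $(n-1)$-dimensional manifold; after a translation and Fubini this yields a contribution of order $t\,\Ltwonorm{\nabla_t v}{\genbd}^2$. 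For the far part, the crude estimate $|v(x) - v(y)|^2 \le 2|v(x)|^2 + 2|v(y)|^2$ combined with the Ahlfors-type tail bound $\int_{\{x \in \genbd :\, |x-y| \ge t\}} |x-y|^{-n} \dd{s}(x) \lesssim t^{-1}$ gives a contribution of order $t^{-1}\Ltwonorm{v}{\genbd}^2$. Summing the two and optimizing in $t$, namely $t = \Ltwonorm{v}{\genbd}/\Ltwonorm{\nabla_t v}{\genbd}$, produces exactly the geometric mean on the right-hand side.

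For the second inequality I would exploit the identity $\htracedbzseminorm{\mu}{\genbd} = \htraceseminorm{\mu^\star}{\partial\domain}$ recorded in the excerpt, together with the decomposition $\htracedbzseminorm{\mu}{\genbd}^2 = \htraceseminorm{\mu}{\genbd}^2 + \int_\genbd \int_{\partial\domain \setminus \genbd} |\mu(y)|^2 |x-y|^{-n}\,\dd{s}(x)\dd{s}(y)$. The intrinsic seminorm $\htraceseminorm{\mu}{\genbd}^2$ is handled by the first inequality, using $\Ltwonorm{\mu - \avg{\mu}{\genbd}}{\genbd} \le \Ltwonorm{\mu}{\genbd}$. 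The remaining cross term reduces, after the tail integration $\int_{\partial\domain \setminus \genbd}|x-y|^{-n}\dd{s}(x) \lesssim \dist(y, \partial\genbd)^{-1}$, to the boundary Hardy weight $\int_\genbd |\mu(y)|^2 \dist(y,\partial\genbd)^{-1}\dd{s}(y)$, which I would bound by $\Ltwonorm{\mu}{\genbd}\,\Ltwonorm{\nabla_t\mu}{\genbd}$ via a Hardy inequality; its validity hinges precisely on the vanishing-trace behaviour at $\partial\genbd$ encoded by the membership $\mu \in \htracedbz{\genbd}$ (for a function not decaying at $\partial\genbd$ this weighted integral would diverge).

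The main obstacle is not the structure of the argument but the requirement that every constant be independent of the size of $\genbd$, as demanded by the $\lesssim$ convention. I would address this by a scaling normalization: a direct check shows that both sides of each inequality scale as $\lambda^{(n-2)/2}$ under dilations of $\genbd$, so one may rescale to unit diameter without loss. On the rescaled configuration the isotropy of $\genbd$ in the sense of \cref{def:isotropic subset} guarantees a nondegenerate, non-collapsing shape, which is exactly what provides the uniform bi-Lipschitz charts (so that Euclidean and geodesic distances are comparable) and the Ahlfors–David regularity $|\genbd \cap B_r(y)| \lesssim r^{n-1}$ underlying both the near-part change of variables and the far-part tail bounds. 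Verifying these geometric estimates uniformly over the admissible class of isotropic Lipschitz boundaries, rather than the interpolation mechanism itself, is where the real work lies.
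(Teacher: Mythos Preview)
The paper does not contain a proof of this lemma. It is stated in \cref{sec:results on trace spaces} as a quotation of Lemma~A.4 from Weder and Buffa\cite{weder_analysis-aware_2025}, and the appendix explicitly says ``For details and proofs, we refer to Chanon\cite{chanon_adaptive_2022}, Weder and Buffa\cite{weder_analysis-aware_2025} and Weder\cite{weder_extension_2025}.'' There is therefore no in-paper proof to compare your proposal against.

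On its own merits, your strategy is sound and standard: the scale-$t$ splitting of the Gagliardo double integral, with the near-diagonal part controlled by the tangential gradient and the far part by the $L^2$ norm, followed by optimization in $t$, is the usual direct route to the $H^{1/2}$ interpolation inequality, and your scaling observation correctly explains why the constants are size-independent under the isotropy assumption. For the second inequality, your reduction of the cross term to a weighted integral $\int_\genbd |\mu|^2\,\dist(\cdot,\partial\genbd)^{-1}$ is right, and the bound you claim does follow from the classical Hardy inequality $\int |\mu|^2\,\dist(\cdot,\partial\genbd)^{-2} \lesssim \Ltwonorm{\nabla_t\mu}{\genbd}^2$ combined with Cauchy--Schwarz, though you might make that intermediate step explicit. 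The one place to be careful is the near-diagonal Fubini argument on a Lipschitz manifold (the change of variables along geodesics), but this is routine once bi-Lipschitz charts are available.
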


Next, we state some results on the norm equivalence between the natural trace space norm defined in \cref{eq:preliminaries:definition natural trace norm} and the integral trace norm defined in \cref{eq:definition of fractional sobolev norm}. A detailed discussion of different trace space norms can be found in \cite{hsiao_boundary_2021}.

\begin{proposition}[Proposition 2.5 in Weder and Buffa\cite{weder_analysis-aware_2025}]
\label{prop:norm equivalence for htracedbz}
    Let $\domain \subset \R^n, n \geq 2$ be a domain with Lipschitz boundary $\partial \domain$ and $\genbd \subset \partial \domain$ with $|\genbd| > 0$. Then, for every $\mu \in \htracedbz{\genbd}$,
    \begin{align*}
        \htraceinfnorm{\mu^\star}{\partial \domain} \lesssim |\genbd|^{\frac{-1}{2(n-1)}} \Ltwonorm{\mu}{\genbd} + \htracedbzseminorm{\mu}{\genbd}.
    \end{align*}
\end{proposition}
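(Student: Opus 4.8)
The plan is to bound the natural trace norm directly from its definition \cref{eq:preliminaries:definition natural trace norm} as an infimum of $\honenorm{\cdot}{\domain}$ over extensions: it suffices to exhibit one admissible extension of $\mu^\star$ into $\domain$ and control its $H^1$-norm, and then to re-express the resulting bound in terms of quantities supported on $\genbd$ using the identities recorded just after \cref{eq:definition htracedbz}.

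First I would observe that $\mu\in\htracedbz{\genbd}$ means precisely that $\mu^\star\in\htrace{\partial\domain}$, so the lifting $\liftop_{\partial\domain}(\mu^\star)$ is an admissible competitor in the infimum defining $\htraceinfnorm{\mu^\star}{\partial\domain}$. Using the boundedness of $\liftop_{\partial\domain}$ as an operator $\htrace{\partial\domain}\to\hone{\domain}$,
\begin{align*}
\htraceinfnorm{\mu^\star}{\partial\domain}\;\leq\;\honenorm{\liftop_{\partial\domain}(\mu^\star)}{\domain}\;\lesssim\;\htracenorm{\mu^\star}{\partial\domain}.
\end{align*}
Next I would invoke the identity $\htracenorm{\mu^\star}{\partial\domain}=\htracedbznorm{\mu}{\genbd}$ from after \cref{eq:definition htracedbz}, which recasts the integral trace norm of the zero-extension on all of $\partial\domain$ as the localized $\htracedbz{}$-norm on $\genbd$. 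Expanding $\htracedbznorm{\mu}{\genbd}^2=\Ltwonorm{\mu}{\genbd}^2+\htracedbzseminorm{\mu}{\genbd}^2$ and using $\sqrt{a^2+b^2}\leq a+b$ for $a,b\geq 0$ gives $\htraceinfnorm{\mu^\star}{\partial\domain}\lesssim\Ltwonorm{\mu}{\genbd}+\htracedbzseminorm{\mu}{\genbd}$. Finally, since $\genbd\subset\partial\domain$ and the ambient domains are uniformly bounded, $|\genbd|$ is bounded above, so the weight obeys $1\lesssim|\genbd|^{-\frac{1}{2(n-1)}}$; absorbing it into the first term yields the claim.

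The one non-cosmetic point, and the main obstacle, is that the implied constant must be \emph{independent of the feature size}, as demanded by the convention for $\lesssim$. This reduces to showing that the boundedness constant of $\liftop_{\partial\domain}$ (equivalently, the Hsiao--Wendland equivalence constant between the natural and integral trace norms) is uniform across the family of exact, feature-containing geometries as the feature shrinks. This holds because their Lipschitz character is uniformly controlled, but it is the step that genuinely requires the ambient geometric assumptions rather than a purely functional-analytic input.

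If one instead wishes to see the weight $|\genbd|^{-1/(2(n-1))}$ emerge sharply rather than be merely inserted, I would localize the construction to a neighborhood of $\genbd$, flatten and dilate it to a fixed reference patch of unit size, apply the bounded reference lifting there with a uniform constant, and pull back. The weight then appears transparently from the scaling mismatch: the bulk $H^1$-seminorm (squared) scales like $\delta^{\,n-2}$ under dilation by $\delta\simeq|\genbd|^{1/(n-1)}$ and dominates the bound, whereas re-expressing the reference trace norm in $\genbd$-quantities makes the boundary $L^2$-norm (squared, scaling like $\delta^{\,n-1}$) acquire the extra factor $\delta^{-1}=|\genbd|^{-1/(n-1)}$ relative to the seminorm. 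The delicate part of this variant is ensuring that the pulled-back, zero-extended lifting remains in $\hone{\domain}$ with matching traces across the artificial interface bounding the neighborhood.
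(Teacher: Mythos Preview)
The paper does not supply its own proof here; the proposition is quoted from Weder and Buffa, so there is no in-paper argument to compare against directly. That said, your first route has a genuine gap, and your second route is the one that actually works.

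The step $\htraceinfnorm{\mu^\star}{\partial\domain}\lesssim\htracenorm{\mu^\star}{\partial\domain}$ with a constant independent of the feature size is false, and inserting the weight $|\genbd|^{-1/(2(n-1))}$ afterwards cannot repair it, because the unweighted bound $\htraceinfnorm{\mu^\star}{\partial\domain}\lesssim\Ltwonorm{\mu}{\genbd}+\htracedbzseminorm{\mu}{\genbd}$ is already false with a uniform constant. Take $\domain=B(0,1)\setminus\overline{B(0,\epsilon)}\subset\R^2$, $\genbd=\partial B(0,\epsilon)$, $\mu\equiv 1$. The harmonic lifting $r\mapsto\log(1/r)/\log(1/\epsilon)$ gives $\htraceinfnorm{\mu^\star}{\partial\domain}^2\simeq|\log\epsilon|^{-1}$, while $\Ltwonorm{\mu}{\genbd}^2=2\pi\epsilon$ and $\htracedbzseminorm{\mu}{\genbd}^2\simeq\epsilon$ (only the cross term with the outer circle at distance $\simeq 1$ survives). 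The ratio of natural to integral norm therefore blows up like $(\epsilon|\log\epsilon|)^{-1/2}$. The proposition itself \emph{does} hold in this example because $|\genbd|^{-1/2}\Ltwonorm{\mu}{\genbd}=1$; the weight is thus essential, not cosmetic, and cannot be appended after the fact. Your assertion that ``the Lipschitz character is uniformly controlled'' conflates bounded local Lipschitz constants with uniform Lipschitz geometry: the chart radii near the feature shrink with it, and that loss enters the lifting constant. This is exactly why \cref{lemma:natural norm estimate for constants} is stated separately rather than absorbed into a generic norm equivalence.

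Your scaling variant is the correct argument and should be promoted to the main proof rather than offered as a refinement. Localizing near $\genbd$, dilating to unit scale, applying a fixed reference lifting, and pulling back produces the factor $|\genbd|^{-1/(2(n-1))}$ on the $L^2$ term directly from the scaling mismatch you identify, with no appeal to a uniform global lifting.
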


\begin{lemma}[Lemma 4.3 in Weder and Buffa\cite{weder_analysis-aware_2025}]
\label{lemma:natural norm estimate for constants}
Let $\domain = \defdomain \setminus \overline{\feat} \subset \R^n, n \geq 2$ be a Lipschitz domain with internal Lipschitz feature $\feat$ and feature boundary $\defbd = \partial \feat$. We write $\rmbd_0 = \partial \defdomain$ and we denote by $m_\feat$ the barycenter of $\feat$.

Then, it holds for $1 \in \htracedbz{\defbd}$ that,
\begin{align*}
    \htraceinfnorm{1^\star}{\partial \domain} \lesssim \Bar{c}_\defbd,
\end{align*}
where
\begin{align*}
    \Bar{c}_\defbd \coloneqq  
        \begin{cases}
            \sqrt{\frac{2 \pi}{|\log\left(s_\defbd\right)|}}, & n = 2,\\
            \sqrt{\frac{2 \pi \,\diam{\defbd}}{1 - s_\defbd}}, & n=3
        \end{cases}, && \text{ and } && s_\defbd \coloneqq \frac{\diam{\defbd}}{2 \dist(m_F, \rmbd_0)}.
\end{align*}
\end{lemma}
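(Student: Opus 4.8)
The plan is to read this as a capacity estimate and prove it by producing a single explicit competitor. By the definition of the natural trace norm in \cref{eq:preliminaries:definition natural trace norm},
\begin{align*}
\htraceinfnorm{1^\star}{\partial\domain} = \inf\left\{\honenorm{u}{\domain}\ :\ u\in\hone{\domain},\ \traceop_{\partial\domain}u = 1^\star\right\},
\end{align*}
and since for an internal feature $\partial\domain = \defbd\cup\rmbd_0$ is a disjoint union, admissibility of $u$ means exactly $u = 1$ on $\defbd = \partial\feat$ and $u = 0$ on $\rmbd_0 = \partial\defdomain$. It therefore suffices to build one admissible $u$ with $\honenorm{u}{\domain}\lesssim\Bar{c}_\defbd$. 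I would center the construction at $m_\feat$: set $\rho_1 \coloneqq \sup_{x\in\feat}|x - m_\feat|$, so that $\overline{\feat}\subset\overline{B(m_\feat,\rho_1)}$ and, because $m_\feat$ lies in $\hull{\feat}$, $\rho_1\le\diam{\defbd}$; and set $\rho_2 \coloneqq \dist(m_\feat,\rmbd_0)$, so that $B(m_\feat,\rho_2)\subset\defdomain$ and, for a genuine internal feature, $\rho_1<\rho_2$.

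For the competitor I take the radial equilibrium potential of the spherical shell $\{\rho_1<|x-m_\feat|<\rho_2\}$, namely
\begin{align*}
u(x) \coloneqq \begin{cases} 1, & |x - m_\feat|\le\rho_1,\\ \phi(|x - m_\feat|), & \rho_1<|x-m_\feat|<\rho_2,\\ 0, & |x-m_\feat|\ge\rho_2,\end{cases}
\end{align*}
where $\phi$ is the radial harmonic interpolant between $1$ and $0$, i.e.\ $\phi(r) = \log(\rho_2/r)/\log(\rho_2/\rho_1)$ for $n=2$ and $\phi(r) = (r^{2-n}-\rho_2^{2-n})/(\rho_1^{2-n}-\rho_2^{2-n})$ for $n\ge3$. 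This $u$ is Lipschitz on $\domain$, hence in $\hone{\domain}$, and since $\defbd\subset\{|x-m_\feat|\le\rho_1\}$ and $\rmbd_0\subset\{|x-m_\feat|\ge\rho_2\}$ it satisfies $\traceop_{\partial\domain}u = 1^\star$; in particular it is admissible.

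It then remains to bound $\honenorm{u}{\domain}^2 = \honeseminorm{u}{\domain}^2 + \Ltwonorm{u}{\domain}^2$. Since $\nabla u$ is supported in the shell, $\honeseminorm{u}{\domain}^2 = \omega_{n-1}\int_{\rho_1}^{\rho_2}|\phi'(r)|^2 r^{n-1}\dd{r}$ is exactly the Newtonian capacity of the shell, which evaluates to $2\pi/\log(\rho_2/\rho_1)$ for $n=2$ and to $4\pi\rho_1\rho_2/(\rho_2-\rho_1)$ for $n=3$ (more generally $(n-2)\omega_{n-1}/(\rho_1^{2-n}-\rho_2^{2-n})$). Inserting $\rho_1\le\diam{\defbd}$ and $\rho_2 = \dist(m_\feat,\rmbd_0)$, so that $\rho_1/\rho_2\le 2 s_\defbd$, one bounds this by a universal constant times $\Bar{c}_\defbd^2$: for $n=2$ via $\log(\rho_2/\rho_1)\ge|\log s_\defbd| - \log 2$, which is at least $\tfrac12|\log s_\defbd|$ for $s_\defbd$ small; for $n=3$ via $4\pi\rho_1\rho_2/(\rho_2-\rho_1)\le 4\pi\diam{\defbd}\,\rho_2/(\rho_2-\diam{\defbd})\simeq 2\pi\diam{\defbd}/(1-s_\defbd)$. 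For the $L^2$ term, $0\le u\le1$ together with the refined pointwise bound $\phi(r)\le\rho_1/r$ on the shell gives $\Ltwonorm{u}{\domain}^2\lesssim\diam{\defbd}^2 + \rho_2^2/\log^2(\rho_2/\rho_1)$ for $n=2$ and $\Ltwonorm{u}{\domain}^2\lesssim\diam{\defbd}^2\,\rho_2$ for $n=3$, which in either case is of strictly lower order than $\Bar{c}_\defbd^2$ as $\diam{\defbd}\to0$. Combining the two estimates yields $\honenorm{u}{\domain}\lesssim\Bar{c}_\defbd$, which is the claim.

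The only genuine obstacle is the bookkeeping of constants in this last step: one must check that replacing $\feat$ and $\defdomain$ by the concentric balls $B(m_\feat,\rho_1)\supset\feat$ and $B(m_\feat,\rho_2)\subset\defdomain$ costs only a universal factor — this is where $\rho_1\le\diam{\defbd}$ and $\rho_1/\rho_2\le 2 s_\defbd$ are used — and that the competitor's $L^2$ contribution is absorbed by the dominant Dirichlet-energy term, which is precisely the shell capacity that produces the $\Bar{c}_\defbd$ scaling. Conceptually there is nothing hard here; it is a classical capacity computation, and the hypothesis that $\feat$ is internal enters only to ensure $\rho_1<\rho_2$, so that the shell is non-degenerate and $u$ is well defined.
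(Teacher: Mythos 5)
Your overall strategy --- exhibiting the radial capacitary potential of a spherical shell as an explicit competitor in the infimum defining $\htraceinfnorm{\cdot}{\partial\domain}$ and identifying its Dirichlet energy with the shell capacity --- is exactly the computation that produces the constants $\Bar{c}_\defbd$ (indeed $2\pi/|\log s_\defbd|$ and $2\pi\diam{\defbd}/(1-s_\defbd)$ are precisely the capacities of the shell between the radii $\diam{\defbd}/2$ and $\dist(m_\feat,\rmbd_0)$), so the approach is the right one. Two steps, however, need repair. First, your inner radius $\rho_1=\sup_{x\in\feat}|x-m_\feat|$ is only bounded by $\diam{\defbd}$, not by $\diam{\defbd}/2$: the barycenter need not be a circumcenter, so $\rho_1/\rho_2\le 2s_\defbd$ is the best you get. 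Consequently the shell may be degenerate (one can have $\rho_1\ge\rho_2$ while $s_\defbd<1$, e.g.\ for an elongated feature whose barycenter is closer to $\rmbd_0$ in a direction away from the feature than to the feature's far end), and even when it is not, the asserted equivalence $4\pi\diam{\defbd}\,\rho_2/(\rho_2-\diam{\defbd})\simeq 2\pi\diam{\defbd}/(1-s_\defbd)$ fails as $s_\defbd\to 1/2$, and likewise $|\log(2s_\defbd)|\gtrsim|\log s_\defbd|$ needs $s_\defbd$ bounded away from $1/2$, not merely ``small''. You should either restrict explicitly to a regime such as $s_\defbd\le 1/4$, where all your comparisons hold with universal constants, or replace $B(m_\feat,\rho_1)$ by a ball of radius comparable to $\diam{\defbd}/2$ that provably contains $\feat$ and track the resulting factor.

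Second, the pointwise bound $\phi(r)\le\rho_1/r$ that you invoke to control the $L^2$ term is false for $n=2$: writing $t=r/\rho_1$ and $T=\rho_2/\rho_1$, it amounts to $t\log(T/t)\le\log T$, which already fails for $t=2$, $T=100$ (it does hold for $n\ge 3$). The conclusion you want survives, because a direct computation (substitute $r=\rho_2 e^{-\tau}$) gives $\int_{\rho_1<|x-m_\feat|<\rho_2}\phi^2\,\dd{x}\le \tfrac{\pi}{2}\,\rho_2^2/\log^2(\rho_2/\rho_1)$, which is exactly the bound you state; but the justification must be this integral estimate, not the pointwise one. Note also that absorbing the $L^2$ contribution into $\Bar{c}_\defbd^2$ uses $\rho_2\lesssim 1$, i.e.\ a constant depending on $\diam{\defdomain}$; this is consistent with the fixed outer domain of the defeaturing setting, but it is worth stating, since the paper's convention for $\lesssim$ nominally excludes dependence on domain sizes.
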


A generalization of the following result on the operator norm of the partial Neumann trace operator will be used repeatedly in \cref{sec:linear elasticity:reliability,sec:stokes:reliability}:

\begin{lemma}[Lemma 2.6 in Weder and Buffa\cite{weder_analysis-aware_2025}]
\label{lemma:size independence of neumann trace operator norm}
    Let $\domain \subset \R^n, n \geq 2$ be a Lipschitz domain and $\genbd \subset \partial \domain$ with $|\genbd| > 0$ and $\partial\genbd \neq \emptyset$. Then, it holds for any $v \in \hdiv\domain$ and $\mu \in \htracedbz{\genbd}$ that
    \begin{align*}
        \langle \partialneumop\genbd (v), \mu \rangle \lesssim \hdivnorm{v}{\domain} \htracedbznorm{\mu}{\genbd}.
    \end{align*}
\end{lemma}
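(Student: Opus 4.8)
The plan is to transfer the partial Neumann trace pairing on $\genbd$ to the full-boundary Neumann trace on $\partial\domain$, where all constants depend on $\domain$ alone, and then to convert the trace norm on $\partial\domain$ back into the $\htracedbz{\genbd}$-norm by means of the defining identity $\htracedbznorm{\mu}{\genbd} = \htracenorm{\mu^\star}{\partial\domain}$. First I would use the definition \cref{eq:definition of partial neumann operator} to write $\langle \partialneumop\genbd(v), \mu \rangle = \langle \neumop(v), \mu^\star \rangle$, which is meaningful precisely because $\mu \in \htracedbz{\genbd}$ forces $\mu^\star \in \htrace{\partial\domain}$, so that $\liftop_{\partial\domain}(\mu^\star)$ is well defined.

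Next I would expand the right-hand side by the definition \cref{eq:defnition of neumann operator} of the Neumann trace operator and bound each contribution by Cauchy--Schwarz. Starting from
\begin{align*}
\langle \neumop(v), \mu^\star \rangle = \int_\domain v \cdot \nabla \liftop_{\partial\domain}(\mu^\star) \dd{x} + \int_\domain \Div(v)\, \liftop_{\partial\domain}(\mu^\star) \dd{x},
\end{align*}
a termwise application of Cauchy--Schwarz followed by the discrete Cauchy--Schwarz inequality on the two-term sum yields the sharp bound
\begin{align*}
|\langle \neumop(v), \mu^\star \rangle| \leq \hdivnorm{v}{\domain}\, \honenorm{\liftop_{\partial\domain}(\mu^\star)}{\domain},
\end{align*}
since $\Ltwonorm{v}{\domain}^2 + \Ltwonorm{\Div(v)}{\domain}^2 = \hdivnorm{v}{\domain}^2$ and $\Ltwonorm{\nabla\liftop_{\partial\domain}(\mu^\star)}{\domain}^2 + \Ltwonorm{\liftop_{\partial\domain}(\mu^\star)}{\domain}^2 = \honenorm{\liftop_{\partial\domain}(\mu^\star)}{\domain}^2$.

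To conclude, I would invoke the continuity of the lifting operator $\liftop_{\partial\domain}: \htrace{\partial\domain} \to \hone{\domain}$, whose norm depends only on the Lipschitz domain $\domain$, to obtain $\honenorm{\liftop_{\partial\domain}(\mu^\star)}{\domain} \lesssim \htracenorm{\mu^\star}{\partial\domain}$, and then substitute the identity $\htracenorm{\mu^\star}{\partial\domain} = \htracedbznorm{\mu}{\genbd}$ recorded below \cref{eq:definition htracedbz}. An equivalent route, which I would mention as a remark, exploits that the pairing $\langle \neumop(v), \mu^\star \rangle$ is independent of the particular lifting chosen (by the generalized Green's formula for $v \in \hdiv\domain$), so one may replace $\honenorm{\liftop_{\partial\domain}(\mu^\star)}{\domain}$ by the natural trace norm \cref{eq:preliminaries:definition natural trace norm} and invoke its equivalence with $\htracenorm{\cdot}{\partial\domain}$ on the fixed boundary $\partial\domain$.

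The delicate point --- and the whole reason the lemma is worth isolating --- is the assertion that the implicit constant does not depend on the size $|\genbd|$. This is achieved not by estimating directly on $\genbd$, but by letting the extension-by-zero $\mu^\star$ carry the entire computation onto the fixed boundary $\partial\domain$, where the lifting operator and the trace-norm equivalence contribute constants governed by $\domain$ alone. Estimating instead through the intrinsic $\htrace{\genbd}$-norm would introduce a size-dependent factor of order $|\genbd|^{-1/(2(n-1))}$, exactly the one appearing in \cref{prop:norm equivalence for htracedbz}; it is the passage through $\htracedbz{\genbd}$ that removes it. The hypotheses $|\genbd| > 0$ and $\partial\genbd \neq \emptyset$ enter only to guarantee that $\htracedbz{\genbd}$ is well defined and that the zero extension lands in $\htrace{\partial\domain}$, and are not otherwise used in the estimate.
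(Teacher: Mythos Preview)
Your proposal is correct. The paper does not itself prove this lemma (it is cited from Ref.~\citenum{weder_analysis-aware_2025}), but the argument you give is exactly the scalar version of the continuity proofs the paper does spell out for the tensor-valued Neumann operators in \cref{sec:linear elasticity:reliability,sec:stokes:reliability}: Cauchy--Schwarz on the integration-by-parts definition, passage to a trace norm on the full boundary $\partial\domain$ (the paper takes the infimum over liftings to land on the natural trace norm, which is your alternative route), and then the identity $\htracedbznorm{\mu}{\genbd} = \htracenorm{\mu^\star}{\partial\domain}$ to return to $\genbd$ with constants depending only on $\domain$.
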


Finally, we recall the following extension result on Lipschitz boundaries adapted to the context of defeaturing error estimation, which is essential for the treatment of Dirichlet-Neumann features. For a detailed proof, we refer to Weder\cite{weder_extension_2025}.

\begin{theorem}[Theorem A.6 in Weder and Buffa\cite{weder_analysis-aware_2025}]
\label{thm:boundary extension operator}
    Let $\genbd \subset \R^n$ be a compact $(n-1)$-dimensional Lipschitz manifold and $\defbd \subset \genbd$ an $(n-1)$-dimensional submanifold with boundary. Furthermore, we assume that there exists a subset $\genbd_0 \subset \genbd$, such that
    \begin{align*}
        \defbd \subset \genbd_0 \subset \genbd, & \text{ and } \partial \defbd \cap \partial \genbd = \emptyset.
    \end{align*}
    Finally, we assume that the boundary $\partial \defbd$ of $\defbd$ is an $(n-2)$-dimensional Lipschitz manifold.

    Then, there exists a continuous extension operator $\extop{\defbd}{\genbd}: \htrace{\defbd} \to \htracedbz{\genbd}$, such that for $\mu \in \htrace{\defbd}$,
    \begin{align*}
        \Ltwonorm{\extop{\defbd}{\genbd}(\mu)}{\genbd}^2 \lesssim \Ltwonorm{\mu}{\defbd}^2,
    \end{align*}
    and
    \begin{align*}
        \htraceseminorm{\extop{\defbd}{\genbd}(\mu)}{\genbd}^2 \lesssim |\defbd|^{\frac{-1}{n-1}} \Ltwonorm{\mu}{\defbd}^2 + \htraceseminorm{\mu}{\defbd}^2.
    \end{align*}
\end{theorem}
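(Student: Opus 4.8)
The plan is to realize $\extop{\defbd}{\genbd}$ as a standard fractional extension followed by a cutoff calibrated to the feature scale $|\defbd|^{\frac{1}{n-1}}$, and then to read off the two bounds from a direct Gagliardo computation. First I would construct an auxiliary extension $E_0:\htrace{\defbd}\to\htrace{\genbd}$. Since the hypothesis $\partial\defbd\cap\partial\genbd=\emptyset$ places the $(n-2)$-dimensional Lipschitz manifold $\partial\defbd$ in the interior of $\genbd$, a locally finite atlas flattening $\partial\defbd$, together with the classical reflection extension for $H^{1/2}$ across a Lipschitz interface, produces such an operator, which may be taken supported in $\genbd_0$. A scaling argument, namely rescaling $\defbd$ to unit $(n-1)$-measure, applying the fixed-scale extension, and rescaling back, yields the scale-invariant bound $\htraceseminorm{E_0\mu}{\genbd}^2\lesssim\htraceseminorm{\mu}{\defbd}^2+|\defbd|^{-\frac{1}{n-1}}\Ltwonorm{\mu}{\defbd}^2$ together with $\Ltwonorm{E_0\mu}{\genbd}\lesssim\Ltwonorm{\mu}{\defbd}$, the factor $|\defbd|^{-\frac{1}{n-1}}$ appearing precisely because unit measure forces the length scale $\ell\simeq|\defbd|^{\frac{1}{n-1}}$.

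Next I would introduce a Lipschitz cutoff $\chi$ on $\genbd$ that equals $1$ on $\defbd$, vanishes in a neighborhood of $\partial\genbd$, and transitions over a length $\delta\simeq|\defbd|^{\frac{1}{n-1}}$, so that $\|\nabla_t\chi\|_\infty\lesssim|\defbd|^{-\frac{1}{n-1}}$; the buffer $\genbd_0$ is exactly what provides the room for this transition with a constant independent of the feature size. Setting $\extop{\defbd}{\genbd}\mu:=\chi\,E_0\mu$, the $L^2$ bound is immediate from $0\le\chi\le1$. Moreover, since $\chi\,E_0\mu$ vanishes near $\partial\genbd$, its zero-extension $(\chi E_0\mu)^\star$ has no jump across $\partial\genbd$, so the cross term $\int_\genbd\int_{\partial\domain\setminus\genbd}|(\chi E_0\mu)(y)|^2|x-y|^{-n}\,\dd{s}(x)\,\dd{s}(y)$ is finite; hence $(\chi E_0\mu)^\star\in\htrace{\partial\domain}$, i.e.\ $\extop{\defbd}{\genbd}\mu\in\htracedbz{\genbd}$.

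The crux is the $\htrace{\genbd}$-seminorm of the product. Writing $\chi(x)(E_0\mu)(x)-\chi(y)(E_0\mu)(y)=\chi(x)\big((E_0\mu)(x)-(E_0\mu)(y)\big)+(E_0\mu)(y)\big(\chi(x)-\chi(y)\big)$ and inserting this into the Gagliardo double integral, the first summand is bounded by $\|\chi\|_\infty^2\htraceseminorm{E_0\mu}{\genbd}^2\lesssim\htraceseminorm{E_0\mu}{\genbd}^2$. For the second summand I would integrate the kernel in $x$ first, using $|\chi(x)-\chi(y)|\le\min\{2,\|\nabla_t\chi\|_\infty|x-y|\}$ and splitting the $(n-1)$-dimensional integral at $|x-y|\simeq\delta$: the near region contributes $\|\nabla_t\chi\|_\infty^2\,\delta$ and the far region $\delta^{-1}$, both of order $|\defbd|^{-\frac{1}{n-1}}$. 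This gives $\lesssim|\defbd|^{-\frac{1}{n-1}}\Ltwonorm{E_0\mu}{\genbd}^2\lesssim|\defbd|^{-\frac{1}{n-1}}\Ltwonorm{\mu}{\defbd}^2$ for the second summand, and combining with the bound on $E_0$ produces exactly the claimed seminorm estimate.

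The main obstacle is the sharp, feature-size-independent scaling. Both the reflection in $E_0$ and the cutoff must be tied to the common scale $|\defbd|^{\frac{1}{n-1}}$ so that the rescaling and the near/far split both deliver the factor $|\defbd|^{-\frac{1}{n-1}}$ rather than a geometry-dependent one; this is where the Lipschitz regularity of $\partial\defbd$ (for a uniform extension constant after flattening) and the buffer $\genbd_0$ (for a transition length comparable to the feature) are indispensable, and where one must verify that the chart changes do not distort the length scale.
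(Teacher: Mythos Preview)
The paper does not contain a proof of this theorem. It is stated in \cref{sec:results on trace spaces} as a quoted result, with the sentence ``For a detailed proof, we refer to Weder\cite{weder_extension_2025}'' immediately preceding it; no argument appears in the paper itself. There is therefore nothing to compare your proposal against.

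On its own merits, your outline is the natural strategy: build a fractional extension across the Lipschitz interface $\partial\defbd$, multiply by a cutoff to force membership in $\htracedbz{\genbd}$, and track the scale $|\defbd|^{1/(n-1)}$ through both steps so that the hidden constants are feature-size independent. The Gagliardo product estimate you describe, with the near/far split at radius $\delta$, is the standard device and gives the right power of $|\defbd|$. One point to be careful about is your claim that ``the buffer $\genbd_0$ is exactly what provides the room for this transition with a constant independent of the feature size'': the hypothesis only asserts the existence of some $\genbd_0$ with $\defbd\subset\genbd_0\subset\genbd$ and $\partial\defbd\cap\partial\genbd=\emptyset$, which does not by itself guarantee a collar of width comparable to $|\defbd|^{1/(n-1)}$ around $\defbd$. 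In the defeaturing regime (feature small relative to a fixed ambient boundary) this is harmless, but you should state explicitly what geometric quantity the hidden constant is allowed to depend on, and verify that the cutoff width is available uniformly as $|\defbd|\to 0$.
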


\section{The estimator for Dirichlet features linear elasticity}
\label{sec:linear elasticity:reliability}
This appendix is dedicated to proving the reliability of the a posteriori error estimators for Dirichlet features in the linear elasticity problem \cref{eq:elasticity:dirdir feature estimator,eq:elasticity:dirneum feature estimator,eq:elasticity:dirint feature estimator}, as stated in \cref{thm:linear elasticity:dirichlet estimator reliability}. The proof follows the same strategy as the one for the Poisson equation presented in Weder and Buffa\cite{weder_analysis-aware_2025}, using key estimates for Sobolev trace spaces from \cref{sec:results on trace spaces}.

To that end, it suffices to consider the exact problem \cref{eq:linear elasticity:exact problem} and the defeatured problem \cref{eq:linear elasticity:defeatured problem} with one single feature of Dirichlet-Dirichlet, Dirichlet-Neumann or internal Dirichlet type. In this simplified case, the defeaturing error satisfies the following PDE:
\begin{equation}
\label{eq:linear elasticity:reliability:error pde}
\begin{cases}
    - \nabla \cdot \stress(\be) = \boldsymbol{0}, & \text{ in } \domain,\\
    \stress(\be) \bn = \boldsymbol{0}, & \text{ on } \rmbd_N,\\
    \be = \boldsymbol{0}, & \text{ on } \rmbd_D \setminus \overline{\defbd},\\
    \be = \bderrvec, & \text{ on } \defbd.
\end{cases}
\end{equation}
From an integration by parts and \cref{eq:linear elasticity:reliability:error pde}, we find that
\begin{align}
\label{eq:linear elasticity:error ibp}
    \energynorm{\be}{\domain}^2 = \int_\domain \stress(\be) : \strain(\be) \dd{x} = \int_{\partial \domain} \stress(\be) \bn \cdot \be \dd{s},
\end{align}
To arrive at an error estimate, we must carefully study the Neumann trace $\stress(\be) \bn$ on the right-hand side of \cref{eq:linear elasticity:error ibp}.
Accordingly, we define the space
\begin{align*}
\hdivtensor{\domain} \coloneqq \{\bs \in \Ltwotensor{\domain}: \nabla \cdot \bs \in \Ltwovec{\domain}\},
\end{align*}
equipped with the norm
\begin{align*}   
\hdivtensornorm{\bs}{\domain}^2 \coloneqq \Ltwonorm{\bs}{\Omega}^2 + \Ltwonorm{\nabla \cdot \bs}{\Omega}^2.
\end{align*}
Then, the Neumann operator for the linear elasticity problem, $\neumopvec: \hdivtensor{\domain} \to \htracedualvec{\partial\domain}$, is defined by integration by parts,
\begin{align}
\label{eq:linear elasticity:definition of the neumann trace}
    \langle \neumopvec(\bs), \bmu \rangle \coloneqq \int_\domain (\nabla \cdot \bs) \cdot \boldsymbol{\phi}_{\bmu} \dd{x} + \int_\domain \bs : \strain(\boldsymbol{\phi}_{\bmu}) \dd{x},
\end{align}
for some $\boldsymbol{\phi}_{\bmu} \in \honevecbd{\domain}{\bmu}{\partial\domain}$. This construction is similar to the one for the standard Neumann operator in Girault and Raviart\cite{girault_finite_1986}. This operator is clearly continuous since the Cauchy-Schwarz inequality yields
\begin{align*}
    \langle \neumopvec(\bs), \bmu \rangle &\leq \Ltwonorm{\nabla \cdot \bs}{\domain} \Ltwonorm{\boldsymbol{\phi}_{\bmu}}{\domain}
    \\
    &+ \Ltwonorm{\bs}{\domain} \honeseminorm{\boldsymbol{\phi}_{\bmu}}{\domain} \leq \hdivtensornorm{\bs}{\domain} \honenorm{\boldsymbol{\phi}_{\bmu}}{\domain}.
\end{align*}
Taking the infimum over all possible liftings $\boldsymbol{\phi}_{\bmu}$, we obtain
\begin{align*}
    \langle \neumopvec(\bs), \bmu \rangle \leq \hdivtensornorm{\bs}{\domain} \htraceinfnorm{\bmu}{\partial \domain},
\end{align*}
which proves the continuity with respect to the equivalent natural trace space norm \cref{eq:preliminaries:definition natural trace norm}. Given a subset $\genbd \subset \partial\domain$ with $|\genbd| > 0$, we may define the partial Neumann operator onto $\genbd$ as $\partialneumopvec{\genbd}: \hdivtensor{\domain} \to \htracedbzdualvec{\genbd}$ with
\begin{align}
\label{eq:linear elasticity:definition of the partial neumann trace}
    \langle \partialneumopvec{\genbd}(\bs), \bmu \rangle \coloneqq \langle \neumopvec(\bs), \bmu^\star \rangle,
\end{align}
where $\bmu^\star$ denotes the extension by zero of $\bmu$ to $\htracevec{\partial \domain}$.
\begin{remark}
\label{rem:elasticity:neumop operator norm}
We point out that if $\genbd$ itself has a boundary, then a result similar to \cref{lemma:size independence of neumann trace operator norm} holds for $\partialneumopvec{\genbd}$ asserting that its operator norm does not depend on the size of $\genbd$.
\end{remark}

Starting from the error representation in \cref{eq:linear elasticity:error ibp} and using the Neumann operator introduced in the previous subsection, we have
\begin{align*}
    \energynorm{\be}{\domain}^2 = \langle \neumopvec(\stress(\be)), \be \rangle.
\end{align*}
For Dirichlet-Dirichlet and internal features, we have $\bderrvec \in \htracedbzvec{\defbd}$ and in particular
\begin{align}
\label{eq:linear elasticity:dd and internal boundary rep}
    \energynorm{\be}{\domain}^2 = \langle \partialneumopvec{\defbd}(\stress(\be)), \bderrvec \rangle.
\end{align}
For Dirichlet-Neumann features, the boundary error $\bderrvec$ is not in $\htracedbzvec{\defbd}$ in general, but merely in $\htracevec{\defbd}$. Therefore, the previous boundary representation of the error is not valid. However, since the Neumann trace of $\stress(\be)$ vanishes on $\rmbd_N$, we can extend $\bderrvec$ to $\htracedbzvec{\genbd}$, where 
\begin{align*}
\defbd \subset \genbd \subset \interior{\overline{\defbd} \cup \overline{\rmbd_N}},
\end{align*}
is an open neighborhood of $\defbd$ in the Neumann boundary. Using the continuous extension operator $\extop{\defbd}{\genbd}: \htracevec{\defbd} \to \htracedbzvec{\genbd}$ from \cref{thm:boundary extension operator}, we may thus write
\begin{align}
\label{eq:linear elasticity:dn boundary rep}
\energynorm{\be}{\domain}^2 = \langle \partialneumopvec{\genbd}(\stress(\be)), \extop{\defbd}{\genbd}(\bderrvec)\rangle.
\end{align}
We circumvented the latter technicality in the main text by assuming that $\neumopvec(\stress(\be)) \in \Ltwovec{\partial \domain}$ due to additional regularity of the solution.
Let us now prove the reliability result for the linear elasticity problem:

\begin{theorem}
\label{thm:linear elasticity:dirichlet estimator reliability}
Consider the defeaturing problem for the linear elasticity equation with one single isotropic Dirichlet feature. Then, the following holds:
\begin{enumerate}
    \item If the feature is of Dirichlet-Dirichlet type, then the estimator \cref{eq:elasticity:dirdir feature estimator} is reliable. That is, if $\overline{\defbd} \cap \overline{\rmbd_D} \neq \emptyset$ and $\overline{\defbd} \cap \overline{\rmbd_N} = \emptyset
    $, then
    \begin{align*}
        \energynorm{\be}{\domain} \lesssim \defestdd{\bu_0}{\defbd}.
    \end{align*}
    \item If the feature is if Dirichlet-Neumann type, then the estimator \cref{eq:elasticity:dirneum feature estimator} is reliable. That is, if $\overline{\defbd} \cap \overline{\rmbd_N} \neq \emptyset$, then
    \begin{align*}
        \energynorm{\be}{\domain} \lesssim \defestdn{\bu_0}{\defbd}.
    \end{align*}
    \item If the feature is internal, then the estimator \cref{eq:elasticity:dirint feature estimator} is reliable. That is, if $\overline{\defbd} \cap \overline{(\partial \domain \setminus \defbd)} = \emptyset$, then
    \begin{align*}
        \energynorm{\be}{\domain} \lesssim \defestint{\bu_0}{\defbd}.
    \end{align*}
\end{enumerate}
\end{theorem}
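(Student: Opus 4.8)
The plan is to transcribe the Poisson reliability proof of Weder and Buffa step by step, replacing the scalar Neumann trace $\neumop$ by the tensor-valued operators $\neumopvec$ and $\partialneumopvec{\genbd}$ introduced above. For each feature type one starts from the corresponding boundary representation of the error---\cref{eq:linear elasticity:dd and internal boundary rep} for Dirichlet-Dirichlet and internal features, \cref{eq:linear elasticity:dn boundary rep} for Dirichlet-Neumann features---and the key preliminary observation is that the error PDE \cref{eq:linear elasticity:reliability:error pde} forces $\nabla\cdot\stress(\be)=\boldsymbol 0$. Hence $\stress(\be)\in\hdivtensor{\domain}$ with $\hdivtensornorm{\stress(\be)}{\domain}=\Ltwonorm{\stress(\be)}{\domain}$, and the pointwise identity $\stress(\be):\strain(\be)=2\mu\,|\strain(\be)|^2+\lambda(\nabla\cdot\be)^2$---together with the thermodynamic-stability conditions $\mu>0$ and $\lambda+\tfrac23\mu>0$, which one checks (by splitting $\strain(\be)$ into its deviatoric and spherical parts) make $\stress(\be):\strain(\be)$ uniformly comparable to $|\strain(\be)|^2$ for $n\in\{2,3\}$---yields $\Ltwonorm{\stress(\be)}{\domain}\lesssim\energynorm{\be}{\domain}$ with a constant depending only on $\mu$, $\lambda$, and $n$. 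Combined with the size-independent operator-norm bound of \cref{rem:elasticity:neumop operator norm}, this reduces the problem to bounding a trace-space norm of the boundary error $\bderrvec$ by the appropriate estimator, exactly as in the scalar case.

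For a Dirichlet-Dirichlet feature, $\be$ vanishes on $\rmbd_D\setminus\overline{\defbd}$ by \cref{eq:linear elasticity:reliability:error pde}, so $\bderrvec$ extends by zero to an $H^{1/2}$ function on $\partial\domain$, i.e. $\bderrvec\in\htracedbzvec{\defbd}$. Pairing \cref{eq:linear elasticity:dd and internal boundary rep} with $\bderrvec$, applying \cref{rem:elasticity:neumop operator norm} and the above bound on $\Ltwonorm{\stress(\be)}{\domain}$, and dividing by $\energynorm{\be}{\domain}$ gives $\energynorm{\be}{\domain}\lesssim\htracedbznorm{\bderrvec}{\defbd}$. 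Since $|\defbd|$ is bounded by the fixed diameter of $\partial\domain$, \cref{lemma:poincare I} absorbs the $L^2$-part of this norm into its seminorm, and the second estimate of \cref{lemma:interpolation and poincare inequality} (the one valid on $\htracedbzvec{\defbd}$) then yields $\htracedbznorm{\bderrvec}{\defbd}\lesssim\sqrt{\Ltwonorm{\bderrvec}{\defbd}\,\Ltwonorm{\nabla_t\bderrvec}{\defbd}}\simeq\defestdd{\bu_0}{\defbd}$.

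For a Dirichlet-Neumann feature $\bderrvec$ need not vanish on $\partial\defbd$, so we split $\bderrvec=(\bderrvec-\avg{\bderrvec}{\defbd})+\avg{\bderrvec}{\defbd}$ and work with the extension operator $\extop{\defbd}{\genbd}$ of \cref{thm:boundary extension operator} in \cref{eq:linear elasticity:dn boundary rep}, reducing as before to bounding $\htracedbznorm{\extop{\defbd}{\genbd}(\bderrvec)}{\genbd}$. The oscillatory part is handled by combining the two bounds of \cref{thm:boundary extension operator} with \cref{lemma:poincare II}---whose factor $|\defbd|^{1/(2(n-1))}$ exactly cancels the unfavourable $|\defbd|^{-1/(n-1)}$ produced by the extension---and with \cref{lemma:interpolation and poincare inequality}, giving $\htracedbznorm{\extop{\defbd}{\genbd}(\bderrvec-\avg{\bderrvec}{\defbd})}{\genbd}\lesssim\sqrt{\Ltwonorm{\bderrvec-\avg{\bderrvec}{\defbd}}{\defbd}\,\Ltwonorm{\nabla_t\bderrvec}{\defbd}}$. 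For the constant part, \cref{thm:boundary extension operator} applied componentwise to the constant vector $\avg{\bderrvec}{\defbd}$ gives $\htracedbznorm{\extop{\defbd}{\genbd}(\avg{\bderrvec}{\defbd})}{\genbd}\lesssim|\defbd|^{(n-2)/(2(n-1))}\vecnorm{\avg{\bderrvec}{\defbd}}$, the $H^{1/2}$-seminorm term (of order $|\defbd|^{(n-2)/(n-1)}$) dominating the $L^2$ term (of order $|\defbd|$) as $|\defbd|\to 0$. Summing the two contributions gives $\energynorm{\be}{\domain}\lesssim\defestdn{\bu_0}{\defbd}$.

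For an internal feature, $\defbd=\partial\feat$ has empty boundary and is separated from $\partial\defdomain$, so $\bderrvec-\avg{\bderrvec}{\defbd}$ and the constant $\avg{\bderrvec}{\defbd}$ both lie in $\htracedbzvec{\defbd}$; we split $\bderrvec$ accordingly in \cref{eq:linear elasticity:dd and internal boundary rep}. The oscillatory part is treated exactly as in the Dirichlet-Neumann case, now without any extension, contributing $\lesssim\sqrt{\Ltwonorm{\bderrvec-\avg{\bderrvec}{\defbd}}{\defbd}\,\Ltwonorm{\nabla_t\bderrvec}{\defbd}}$. For the constant part we do not use the operator-norm bound but pair directly via the continuity of $\neumopvec$ with respect to the natural trace norm, $\langle\partialneumopvec{\defbd}(\stress(\be)),\avg{\bderrvec}{\defbd}\rangle\le\hdivtensornorm{\stress(\be)}{\domain}\,\htraceinfnorm{(\avg{\bderrvec}{\defbd})^\star}{\partial\domain}\lesssim\energynorm{\be}{\domain}\,\Bar{c}_\defbd\,\vecnorm{\avg{\bderrvec}{\defbd}}$, the last step being the vector-valued analogue of \cref{lemma:natural norm estimate for constants}. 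Combining the two contributions yields $\energynorm{\be}{\domain}\lesssim\defestint{\bu_0}{\defbd}$. I expect the principal difficulty to be bookkeeping rather than a single hard estimate: one must check that every negative power of $|\defbd|$ that appears---from $\extop{\defbd}{\genbd}$, from the operator norm of $\partialneumopvec{\genbd}$, and from $\Bar{c}_\defbd$---is exactly compensated, and that the two genuinely new ingredients relative to the Poisson proof---the ellipticity bound $\Ltwonorm{\stress(\be)}{\domain}\lesssim\energynorm{\be}{\domain}$ and the size-independence of $\partialneumopvec{\genbd}$---carry constants depending only on $\mu$, $\lambda$, and $n$; everything else transcribes the scalar argument componentwise.
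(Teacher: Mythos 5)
Your proposal follows the paper's proof essentially step by step for the Dirichlet--Dirichlet and Dirichlet--Neumann cases: the same boundary representations, the same reduction via $\hdivtensornorm{\stress(\be)}{\domain}\lesssim\energynorm{\be}{\domain}$ (which the paper asserts without the deviatoric/spherical justification you supply), and the same chain of \cref{thm:boundary extension operator}, \cref{lemma:poincare II} and \cref{lemma:interpolation and poincare inequality}, including the exact cancellation of the $|\defbd|^{-1/(n-1)}$ factor that you identify.

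The one genuine gap is in the internal-feature case, for the oscillatory part $\bderrvec-\avg{\bderrvec}{\defbd}$. You propose to treat it ``exactly as in the Dirichlet--Neumann case, now without any extension,'' i.e.\ by pairing against $\partialneumopvec{\defbd}(\stress(\be))$ and invoking the size-independent operator-norm bound of \cref{rem:elasticity:neumop operator norm}. But that remark (like \cref{lemma:size independence of neumann trace operator norm}, which it generalizes) is only asserted when $\genbd$ has a nonempty boundary, because it rests on \cref{lemma:poincare I}; for an internal feature $\defbd=\partial\feat$ is a closed manifold, $\partial\defbd=\emptyset$, and \cref{lemma:poincare I} fails (a nonzero constant is the obvious counterexample). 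The paper explicitly flags this and instead routes the whole pairing --- oscillatory part included --- through the continuity of $\neumopvec$ with respect to the natural trace norm $\htraceinfnorm{\cdot}{\partial\domain}$, then converts back to computable quantities via \cref{prop:norm equivalence for htracedbz}, where the factor $|\defbd|^{-1/(2(n-1))}$ in front of the $L^2$ term is absorbed by \cref{lemma:poincare II} (legitimate precisely because the function has zero average), together with a separate estimate of the cross term in $\htracedbzseminorm{\cdot}{\defbd}$ using $\dist(\defbd,\partial\domain\setminus\overline{\defbd})>0$. Since you already use exactly this natural-trace-norm pairing for the constant part (where you correctly reach $\Bar{c}_{\defbd}$ via \cref{lemma:natural norm estimate for constants}), the fix is simply to apply the same detour to the mean-zero part: the bound you state is correct, but the justification you give for it would not go through as written.
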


\begin{proof}
    To prove assertion 1, we use the boundary representation formula \cref{eq:linear elasticity:dd and internal boundary rep}, \cref{rem:elasticity:neumop operator norm} and \cref{lemma:poincare I} to find
   \begin{align*}
       \energynorm{\be}{\domain}^2 = \langle \partialneumopvec{\defbd}(\stress(\be)), \bderrvec \rangle
       \lesssim 2 \hdivtensornorm{\stress(\be)}{\domain} \htracedbzseminorm{\bderrvec}{\defbd} 
       \lesssim 2 \energynorm{\be}{\domain} \htracedbzseminorm{\bderrvec}{\defbd}.
   \end{align*}
   Applying \cref{lemma:interpolation and poincare inequality} to the last factor and dividing by $\energynorm{\be}{\domain}$ on both sides, we find
   \begin{align*}
       \energynorm{\be}{\domain} \lesssim 2 \sqrt{\Ltwonorm{\bderrvec}{\defbd} \Ltwonorm{\nabla_t \bderrvec}{\defbd}},
   \end{align*}
   which proves the assertion.

   To prove assertion 2, we start from the error representation formula \cref{eq:linear elasticity:dn boundary rep}. Using the same argument as before, we obtain
   \begin{nalign}
   \label{eq:elasticity:dirneum proof first estimate}
       \energynorm{\be}{\domain}^2 &= \langle \partialneumopvec{\genbd}(\stress(\be)),
       \extop{\defbd}{\genbd}(\bd_\defbd - \avg{\bd_\defbd}{\defbd}) + \avg{\bd_\defbd}{\defbd} \extop{\defbd}{\genbd}(1)
       \rangle
       \\
       &\lesssim
       2 \energynorm{\be}{\domain} \left( \htracedbzseminorm{\extop{\defbd}{\genbd}(\bd_\defbd - \avg{\bd_\defbd}{\defbd})}{\genbd} + \vecnorm{\avg{\bd_\defbd}{\defbd}} \htracedbzseminorm{\extop{\defbd}{\genbd}(1)}{\genbd} \right).
   \end{nalign}
   Using the continuity properties of $\extop{\defbd}{\genbd}$ stated in Ref.~\citenum{weder_analysis-aware_2025}[Theorem A.6] and \cref{lemma:poincare II,lemma:interpolation and poincare inequality}, we find
   \begin{nalign}
   \label{eq:elasticity:dirneum proof navg estimate}
       \htracedbzseminorm{\extop{\defbd}{\genbd}(\bd_\defbd - \avg{\bd_\defbd}{\defbd})}{\genbd}^2
       \lesssim |\defbd|^{\frac{1}{n-1}} \Ltwonorm{\bd_\defbd - \avg{\bd_\defbd}{\defbd}}{\genbd}^2 + \htraceseminorm{\bd_\defbd - \avg{\bd_\defbd}{\defbd}}{\defbd}^2
       \\
       \lesssim \htraceseminorm{\bd_\defbd - \avg{\bd_\defbd}{\defbd}}{\defbd}^2
       \lesssim  \Ltwonorm{\bd_\defbd - \avg{\bd_\defbd}{\defbd}}{\defbd}\Ltwonorm{\nabla_t \bd_\defbd}{\defbd}.
   \end{nalign}
   Similarly, we find
   \begin{align}
   \label{eq:elasticity:dirneum proof avg estimate}
       \htracedbzseminorm{\extop{\defbd}{\genbd}(1)}{\genbd}^2 \lesssim |\defbd|^{\frac{1}{n-1}} \Ltwonorm{1}{\defbd}^2 \lesssim |\defbd|^{\frac{n - 2}{n-1}}.
   \end{align}
   Inserting the estimates \cref{eq:elasticity:dirneum proof navg estimate} and \cref{eq:elasticity:dirneum proof avg estimate} into \cref{eq:elasticity:dirneum proof first estimate} and simplifying on both sides yields assertion 2.

  To prove assertion 3, we start again from the error representation formula \cref{eq:linear elasticity:dd and internal boundary rep}. However, since the boundary of an internal feature itself does not have a boundary, the Poincaré-type inequality from \cref{lemma:poincare I} does not hold. Hence, the generalization of \cref{lemma:size independence of neumann trace operator norm} does not apply. Nevertheless, we find
   \begin{align*}
        \energynorm{\be}{\domain}^2 &= \langle \partialneumopvec{\defbd}(\stress(\be)), \bderrvec \rangle
        \\
        &\lesssim \hdivtensornorm{\stress(\be)}{\domain}
        \left(
        \htraceinfnorm{(\bd_\defbd - \avg{\bd_\defbd}{\defbd})^\star}{\partial \domain} + \vecnorm{\avg{\bd_\defbd}{\defbd}}\htraceinfnorm{1^\star}{\partial \domain}
        \right ).
   \end{align*}
   Using the fact that $ \hdivtensornorm{\stress(\be)}{\domain} \lesssim \energynorm{\be}{\domain}$, we obtain
   \begin{align*}
        \energynorm{\be}{\domain} 
        \lesssim
        \htraceinfnorm{(\bd_\defbd - \avg{\bd_\defbd}{\defbd})^\star}{\partial \domain} + \vecnorm{\avg{\bd_\defbd}{\defbd}}\htraceinfnorm{1^\star}{\partial \domain}.
   \end{align*}
    Applying \cref{prop:norm equivalence for htracedbz} and \cref{lemma:poincare II} to the first term, we find
    \begin{align}
    \label{eq:elasticity:inf norm expansion}
        \htraceinfnorm{(\bd_\defbd - \avg{\bd_\defbd}{\defbd})^\star}{\partial \domain} \lesssim \htraceseminorm{\bd_\defbd - \avg{\bd_\defbd}{\defbd}}{\defbd} + \htracedbzseminorm{\bd_\defbd - \avg{\bd_\defbd}{\defbd}}{\defbd}.
    \end{align}
    Note that for an internal feature, we have $\dist(\partial \domain \setminus \overline{\defbd}, \defbd) > 0$. Therefore, we can estimate the second term as
    \begin{nalign}
    \label{eq:elasticity:interior feature norm equivalence}
        \htracedbzseminorm{\bd_\defbd - \avg{\bd_\defbd}{\defbd}}{\defbd}^2 = \htraceseminorm{\bd_\defbd - \avg{\bd_\defbd}{\defbd}}{\defbd}^2 + \int_{\partial \domain \setminus \defbd} \int_{\defbd} \frac{|\bd_\defbd(x) - \avg{\bd_\defbd}{\defbd}|^2}{|x - y|^n}\dd{s(x)}\dd{s(y)}
        \\
        \lesssim \htraceseminorm{\bd_\defbd - \avg{\bd_\defbd}{\defbd}}{\defbd}^2 + \frac{|\partial \domain \setminus \defbd|}{\dist(\partial \domain \setminus \defbd, \defbd)^n} \Ltwonorm{\bd_\defbd - \avg{\bd_\defbd}{\defbd}}{\defbd}^2
        \\
        \lesssim \htraceseminorm{\bd_\defbd - \avg{\bd_\defbd}{\defbd}}{\defbd}^2.
    \end{nalign}
    The second term on the right-hand side of \cref{eq:elasticity:inf norm expansion} can be estimated by \cref{lemma:natural norm estimate for constants}. Inserting this estimate into \cref{eq:elasticity:inf norm expansion} together with \cref{eq:elasticity:interior feature norm equivalence} and applying once more \cref{lemma:interpolation and poincare inequality} as for the two other expressions yields assertion 3.
\end{proof}

\section{The estimator for Dirichlet features in Stokes' equations}
\label{sec:stokes:reliability}
This appendix presents the proof of reliability for the a posteriori error estimators for Dirichlet features \cref{eq:stokes:dirdir feature estimator,eq:stokes:dirneum feature estimator,eq:stokes:dirint feature estimator} in the Stokes problem, as stated in \cref{thm:stokes:dirichlet estimator reliability}. The proof strategy closely follows that of the linear elasticity case in \cref{sec:linear elasticity:reliability}, with one additional consideration: the defeaturing error in the pressure variable. We first show that the pressure error can be controlled by the velocity error via the inf-sup stability condition of the Stokes problem. With the pressure error bounded, the remainder of the proof focuses on the velocity error and proceeds in direct analogy to the elasticity case, treating each of the three feature types in turn.

We consider the exact problem \cref{eq:stokes:exact problem} and the defeatured problem \cref{eq:stokes:defeatured problem} with one single Dirichlet feature again. Then the defeaturing error functions $\be_u \coloneqq \bu - (\bu_0)_{|\domain}$ and $e_p \coloneqq p - (p_0)_{|\domain}$ in the velocity and pressure variables, respectively, satisfy the PDE problem
\begin{align}
\label{eq:stokes:error pde}
\begin{cases}
    -\nabla \cdot \stress(\be_u) + \nabla e_p = \boldsymbol{0} & \text{ in } \domain,\\
    \nabla \cdot \be_u = 0 & \text{ in } \domain,\\
    \stress(\be_u) \bn - e_p \bn = \boldsymbol{0} & \text{ on } \rmbd_N,\\
    \be_u = \boldsymbol{0} & \text{ on } \rmbd_D \setminus \overline{\defbd},\\
    \be_u = \bderrvec & \text{ on } \defbd,
\end{cases}
\end{align}
with a corresponding weak formulation similar to \cref{eq:stokes:weak form:momentum,eq:stokes:weak form:pressure}.
From integration by parts and using that $b_\domain(\be_u, e_p) = 0$, we find
\begin{align}
\label{eq:stokes:ibp}
    a_\domain(\be_u, \be_u) =  a_\domain(\be_u, \be_u) + b_\domain(\be_u, e_p) = \int_{\partial \domain}(\stress(\be_u) \bn - e_p \bn) \cdot \be_u \dd{s}.
\end{align}
As for the linear elasticity problem, we must study the Neumann operator on the right-hand side of \cref{eq:stokes:ibp} to derive energy-norm estimates.
To that end, we define the subspace 
\begin{align*}
\mathcal{H}(\domain) \coloneqq \{ (\bs, q) \in \hdivtensor{\domain} \times \Ltwo{\domain}\mid -\nabla\cdot\bs + \nabla q \in \Ltwovec{\domain} \},
\end{align*}
equipped with the norm
\begin{align*}
    ||(\bs, q)||_{\mathcal{H}(\domain)} \coloneqq \Ltwonorm{\bs}{\domain} + \Ltwonorm{q}{\domain} + \Ltwonorm{-\nabla\cdot\bs + \nabla q}{\domain}.
\end{align*}
We define the Neumann operator for the Stokes problem, $\stokesop: \mathcal{H}(\domain) \to \htracedualvec{\partial\domain}$ by integration by parts:
\begin{align*}
    \langle\stokesop(\bs, q), \bmu \rangle \coloneqq \int_\domain (-\nabla \cdot \bs + \nabla q) \cdot \boldsymbol{\phi}_{\bmu} \dd{x} + \int_\domain \bs : \strain(\boldsymbol{\phi}_{\bmu}) \dd{x},
\end{align*}
for some $\boldsymbol{\phi}_{\bmu} \in \honevecbd{\domain}{\bmu}{\partial\domain}$.
The continuity of the operator immediately follows from the Cauchy-Schwarz inequality:
\begin{align*}
    \langle \stokesop(\bs, q), \bmu \rangle &\leq \Ltwonorm{-\nabla \cdot \bs + \nabla q}{\domain} \Ltwonorm{\boldsymbol{\phi}_{\bmu}}{\domain}
    + \Ltwonorm{\bs}{\domain} \honeseminorm{\boldsymbol{\phi}_{\bmu}}{\domain} \leq 
    ||(\bs, q)||_{\mathcal{H}(\domain)} \honenorm{\boldsymbol{\phi}_{\bmu}}{\domain}.
\end{align*}
Taking the infimum over all $\boldsymbol{\phi}_{\bmu}\in \honevecbd{\domain}{\bmu}{\partial\domain}$, we obtain
\begin{align*}
    \langle \stokesop(\bs, q), \bmu \rangle
    \leq ||(\bs, q)||_{\mathcal{H}(\domain)} \htraceinfnorm{\bmu}{\partial \domain},
\end{align*}
which proves the continuity with respect to the equivalent natural trace space norm \cref{eq:preliminaries:definition natural trace norm}.
In analogy to the linear elasticity problem, given a subset $\genbd \subset \partial\domain$ with $|\genbd| > 0$, we may define the partial Neumann operator onto $\genbd$ as $\partialstokesop{\genbd}: \mathcal{H}(\domain) \to \htracedbzdualvec{\genbd}$ by
\begin{align*}
    \langle \partialstokesop{\genbd}(\bs, q), \bmu \rangle \coloneqq \langle \stokesop(\bs, q), \bmu^\star \rangle,
\end{align*}
where $\bmu^\star$ denotes the extension by zero of $\bmu$ to $\htracevec{\partial \domain}$.
\begin{remark}
\label{rem:stokes:neumop operator norm}
Similar to the Neumann operator for the linear elasticity problem, if $\genbd$ itself has a boundary, then a result similar to \cref{lemma:size independence of neumann trace operator norm} holds for $\partialstokesop{\genbd}$ asserting that its operator norm does not depend on the size of $\genbd$.
\end{remark}

Using the boundary representation \cref{eq:stokes:ibp} and the Neumann operator above, we have
\begin{align*}
    a_\domain(\be_u, \be_u) = \langle \stokesop(\stress(\be_u)), \be_u \rangle.
\end{align*}
In the Dirichlet-Dirichlet and internal cases, we have $\bderrvec \in \htracedbzvec{\defbd}$ and in particular,
\begin{align}
\label{eq:stokes:dd and internal boundary rep}
    a_\domain(\be_u, \be_u) = \langle \partialstokesop{\defbd}(\stress(\be_u)), \bderrvec \rangle.
\end{align}
In the Dirichlet-Neumann case, we must again resort to the alternative error representation
\begin{align}
\label{eq:stokes:dn boundary rep}
    a_\domain(\be_u, \be_u) = \langle \partialstokesop{\genbd}(\stress(\be_u)), \extop{\defbd}{\genbd}(\bderrvec) \rangle,
\end{align}
where $\defbd \subset \genbd \subset  \interior{\overline{\defbd} \cup \overline{\rmbd_N}}$ is an open neighborhood of $\defbd$ in the Neumann boundary.

Regarding the error in the pressure variable, we first observe that $b_\domain(\bv, e_p) = -a_\domain(\be_u, \bv)$ for any $\bv \in \honevecbd{\domain}{\boldsymbol{0}}{\rmbd_D}$ due to equation \cref{eq:stokes:weak form:momentum} of the weak form.
Moreover, the Stokes problem satisfies the inf-sup condition\cite{ern_finite_2021} and, therefore, we have
\begin{nalign}
\label{eq:stokes:inf sup pressure estimate}
    \Ltwonorm{e_p}{\domain} &\lesssim \sup_{\bv \in \honevecbd{\domain}{\boldsymbol{0}}{\rmbd_D}}\frac{b_\domain(\bv, e_p)}{\honeseminorm{\bv}{\domain}}
    \\
    &= \sup_{\bv \in \honevecbd{\domain}{\boldsymbol{0}}{\rmbd_D}}\frac{-a_\domain(\be_u, \bv)}{\honeseminorm{\bv}{\domain}} \lesssim \honeseminorm{\be_u}{\domain} \lesssim a_\domain(\be_u, \be_u)^{\frac{1}{2}},
\end{nalign}
Hence, is enough to estimate $a_\domain(\be_u, \be_u)^{\frac{1}{2}}$. Let us now prove the reliability of the Dirichlet estimators for Stokes' equations:

\begin{theorem}
\label{thm:stokes:dirichlet estimator reliability}
Consider the defeaturing problem for the linear elasticity equation with one single Dirichlet feature. Then, the following holds:
\begin{enumerate}
    \item If the feature is of Dirichlet-Dirichlet type, then the estimator \cref{eq:stokes:dirdir feature estimator} is reliable. That is, if $\overline{\defbd} \cap \overline{\rmbd_D} \neq \emptyset$ and $\overline{\defbd} \cap \overline{\rmbd_N} = \emptyset
    $, then
    \begin{align*}
        \energynorm{(\be_u, e_p)}{\domain} \lesssim \defestdd{\bu_0}{\defbd}.
    \end{align*}
    \item If the feature is if Dirichlet-Neumann type, then the estimator \cref{eq:stokes:dirneum feature estimator} is reliable. That is, if $\overline{\defbd} \cap \overline{\rmbd_N} \neq \emptyset$, then
    \begin{align*}
        \energynorm{(\be_u, e_p)}{\domain} \lesssim \defestdn{\bu_0}{\defbd}.
    \end{align*}
    \item If the feature is internal, then the estimator \cref{eq:stokes:dirint feature estimator} is reliable. That is, if $\overline{\defbd} \cap \overline{(\partial \domain \setminus \defbd)} = \emptyset$, then
    \begin{align*}
        \energynorm{(\be_u, e_p)}{\domain} \lesssim \defestint{\bu_0}{\defbd}.
    \end{align*}
\end{enumerate}
\end{theorem}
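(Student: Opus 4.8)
The plan is to reduce the estimate to a bound on $a_\domain(\be_u, \be_u)^{1/2}$ alone and then to repeat, almost verbatim, the three-case argument of \cref{thm:linear elasticity:dirichlet estimator reliability}, with the Stokes Neumann operator $\stokesop$ playing the role of $\neumopvec$. The only genuinely new ingredient is the treatment of the pressure error.

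First I would dispose of the pressure. By the inf-sup estimate \cref{eq:stokes:inf sup pressure estimate} we have $\Ltwonorm{e_p}{\domain} \lesssim a_\domain(\be_u, \be_u)^{1/2}$, hence
\[
\energynorm{(\be_u, e_p)}{\domain} = a_\domain(\be_u, \be_u)^{1/2} + \Ltwonorm{e_p}{\domain} \lesssim a_\domain(\be_u, \be_u)^{1/2}.
\]
Moreover, since $\be_u$ is divergence-free and $-\nabla \cdot \stress(\be_u) + \nabla e_p = \boldsymbol{0}$ by the error PDE \cref{eq:stokes:error pde}, the pair $(\stress(\be_u), e_p)$ belongs to $\mathcal{H}(\domain)$ with
\[
\|(\stress(\be_u), e_p)\|_{\mathcal{H}(\domain)} = \Ltwonorm{\stress(\be_u)}{\domain} + \Ltwonorm{e_p}{\domain} \lesssim a_\domain(\be_u, \be_u)^{1/2},
\]
using $\stress(\be_u) = 2\mu\,\strain(\be_u)$ and \cref{eq:stokes:inf sup pressure estimate} once more. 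It therefore suffices to bound $a_\domain(\be_u, \be_u)$ from above, and from here $\stokesop$ behaves exactly like the elasticity Neumann operator of \cref{sec:linear elasticity:reliability}.

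Next I would treat the three feature types in turn. For a Dirichlet-Dirichlet feature I would start from the boundary representation \cref{eq:stokes:dd and internal boundary rep}, apply the size-robust operator-norm bound for $\partialstokesop{\defbd}$ (\cref{rem:stokes:neumop operator norm}) together with the bound on $\|(\stress(\be_u), e_p)\|_{\mathcal{H}(\domain)}$ above to obtain $a_\domain(\be_u, \be_u) \lesssim a_\domain(\be_u, \be_u)^{1/2}\,\htracedbzseminorm{\bderrvec}{\defbd}$, divide through, and finish with \cref{lemma:interpolation and poincare inequality} applied to $\bderrvec \in \htracedbzvec{\defbd}$. For a Dirichlet-Neumann feature I would use the alternative representation \cref{eq:stokes:dn boundary rep} on the neighbourhood $\genbd$, split $\bderrvec = (\bderrvec - \avg{\bderrvec}{\defbd}) + \avg{\bderrvec}{\defbd}$, extend each piece by $\extop{\defbd}{\genbd}$, and bound the two resulting $\htracedbz$-seminorms exactly as in \cref{eq:elasticity:dirneum proof navg estimate,eq:elasticity:dirneum proof avg estimate}, using the size-explicit continuity of $\extop{\defbd}{\genbd}$ from \cref{thm:boundary extension operator} together with \cref{lemma:poincare II,lemma:interpolation and poincare inequality}. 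For an internal feature, where $\partial \defbd = \emptyset$ and the operator norm is no longer size-robust, I would instead use continuity of $\stokesop$ with respect to $\htraceinfnorm{\cdot}{\partial \domain}$, split the boundary error once more, apply \cref{prop:norm equivalence for htracedbz} and \cref{lemma:poincare II} to the zero-average part — absorbing the nonlocal term of the $\htracedbz$-seminorm into the intrinsic one since $\dist(\partial \domain \setminus \overline{\defbd}, \defbd) > 0$, as in \cref{eq:elasticity:interior feature norm equivalence} — bound the constant part with \cref{lemma:natural norm estimate for constants}, and conclude with \cref{lemma:interpolation and poincare inequality}.

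The main obstacle is really just the bookkeeping of the first step: one has to verify that inserting the pressure into the $\mathcal{H}(\domain)$-norm costs nothing beyond $a_\domain(\be_u, \be_u)^{1/2}$, which is precisely what \cref{eq:stokes:inf sup pressure estimate} buys us; after that, the proof is a transcription of the elasticity case. The one point requiring a little care is that all implicit constants stay independent of the feature size, which follows from \cref{rem:stokes:neumop operator norm}, the size-explicit bounds in \cref{thm:boundary extension operator}, and the fact that the Stokes inf-sup constant on $\domain$ is bounded below uniformly over the family of separated, isotropic features.
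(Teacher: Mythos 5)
Your proposal is correct and follows essentially the same route as the paper's proof: reduce everything to a bound on $a_\domain(\be_u,\be_u)^{1/2}$ via the inf-sup estimate \cref{eq:stokes:inf sup pressure estimate}, control $\|(\stress(\be_u),e_p)\|_{\mathcal{H}(\domain)}$ by the same quantity, and then transcribe the three-case elasticity argument with $\stokesop$ in place of $\neumopvec$, invoking \cref{rem:stokes:neumop operator norm}, \cref{thm:boundary extension operator}, \cref{prop:norm equivalence for htracedbz}, \cref{lemma:poincare I,lemma:poincare II,lemma:interpolation and poincare inequality,lemma:natural norm estimate for constants} exactly where the paper does. The only difference is trivial bookkeeping (you bound both auxiliary quantities directly by $a_\domain(\be_u,\be_u)^{1/2}$, whereas the paper routes them through $\energynorm{(\be_u,e_p)}{\domain}$ first), which changes nothing.
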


\begin{proof}
    We begin with the following observations that hold in all three cases: First, we note that in virtue of the PDE \cref{eq:stokes:error pde} satisfied by the defeaturing error, we have
    \begin{nalign}
    \label{eq:stokes:proof:H norm estimate}
    ||(\stress(\be_u), e_p)||_{\mathcal{H}(\domain)} &= \Ltwonorm{\stress(\be_u)}{\domain} + \Ltwonorm{e_p}{\domain}
    \\
    &\lesssim a_\domain(\be_u, \be_u)^{\frac{1}{2}} + \Ltwonorm{e_p}{\domain} = \energynorm{(\be_u, e_p)}{\domain}.
    \end{nalign}
    Second, the inf-sup estimate \cref{eq:stokes:inf sup pressure estimate} implies that
    \begin{align}
    \label{eq:stokes:proof:energy norm estimate}
        \energynorm{(\be_u, e_p)}{\domain}^2 &\leq 2 a_\domain(\be_u, \be_u) + 2 \Ltwonorm{e_p}{\domain}^2 \lesssim 4 a_\domain(\be_u, \be_u).
    \end{align}

    For assertion 1, we start from the error representation formula \cref{eq:stokes:dd and internal boundary rep}. Similar, to the proof of \cref{thm:linear elasticity:dirichlet estimator reliability}, we obtain from \cref{rem:stokes:neumop operator norm} and \cref{lemma:poincare I} that
    \begin{align*}
    a_\domain(\be_u, \be_u) = \langle \partialstokesop{\defbd}(\stress(\be_u), e_p), \bderrvec \rangle
    \lesssim 2 ||(\stress(\be_u), e_p)||_{\mathcal{H}(\domain)} \htracedbzseminorm{\bderrvec}{\defbd}.
    \end{align*}
    Applying \cref{eq:stokes:proof:H norm estimate} to the first and \cref{lemma:interpolation and poincare inequality} to the second term, we find
    \begin{align*}
        a_\domain(\be_u, \be_u) \lesssim 2 \energynorm{(\be_u, e_p)}{\domain} \sqrt{\Ltwonorm{\bderrvec}{\defbd} \Ltwonorm{\nabla_t \bderrvec}{\defbd}}.
    \end{align*}
    Assertion 1 then follows from \cref{eq:stokes:proof:energy norm estimate}.

    For assertion 2, we start from the error representation formula \cref{eq:stokes:dn boundary rep}. Using \cref{eq:stokes:proof:H norm estimate,eq:stokes:proof:energy norm estimate}, we immediately obtain
    \begin{align*}
        \energynorm{(\be_u, e_p)}{\domain}^2
        &\lesssim 4 a_\domain(\be_u, \be_u) = 4 \langle \partialstokesop{\genbd}(\stress(\be_u)), \extop{\defbd}{\genbd}(\bderrvec) \rangle
        \\
        &\lesssim 8  \energynorm{(\be_u, e_p)}{\domain}\left( \htracedbzseminorm{\extop{\defbd}{\genbd}(\bd_\defbd - \avg{\bd_\defbd}{\defbd})}{\genbd} + \vecnorm{\avg{\bd_\defbd}{\defbd}} \htracedbzseminorm{\extop{\defbd}{\genbd}(1)}{\genbd} \right),
    \end{align*}
    where we have used the same argument as for assertion 2 in the proof of \cref{thm:linear elasticity:dirichlet estimator reliability} in the last step. Assertion 2 then follows from the estimates \cref{eq:elasticity:dirneum proof navg estimate,eq:elasticity:dirneum proof avg estimate}.

    For assertion 3, we similarly find from the error representation formula \cref{eq:stokes:dd and internal boundary rep,eq:stokes:proof:H norm estimate,eq:stokes:proof:energy norm estimate} that
    \begin{align*}
        \energynorm{(\be_u, e_p)}{\domain}^2
        &\lesssim 4 a_\domain(\be_u, \be_u) = 4 \langle \partialstokesop{\defbd}(\stress(\be_u), e_p), \bderrvec \rangle
        \\
        & \lesssim  8\energynorm{(\be_u, e_p)}{\domain} \left(
        \htraceinfnorm{(\bd_\defbd - \avg{\bd_\defbd}{\defbd})^\star}{\partial \domain} + \vecnorm{\avg{\bd_\defbd}{\defbd}}\htraceinfnorm{1^\star}{\partial \domain}
        \right ).
    \end{align*}
    Assertion 3 then follows in the same way as assertion 3 of \cref{thm:linear elasticity:dirichlet estimator reliability}.
\end{proof}

\bibliographystyle{ws-m3as}
\bibliography{references}

\end{document}